\title{Optimal Deep Neural Network Approximation for Korobov Functions with respect to Sobolev Norms}
\author{%
	Yahong Yang \\
	Department of Mathematics\\
	The Pennsylvania State University, University Park\\State College, PA 16802, USA \\
	\texttt{yxy5498@psu.edu}
	\And
	Yulong Lu\\
	School of Mathematics\\ University of Minnesota, Twin Cities \\Minneapolis, MN  55455, USA\\
	\texttt{yulonglu@umn.edu}}
\begin{document}
	
\maketitle
\begin{abstract}
This paper establishes the nearly optimal rate of approximation for deep neural networks (DNNs) when applied to Korobov functions, effectively overcoming the curse of dimensionality. The approximation results presented in this paper are measured with respect to $L_p$ norms and $H^1$ norms. Our achieved approximation rate demonstrates a remarkable \textit{super-convergence} rate, outperforming traditional methods and any continuous function approximator. These results are non-asymptotic, providing error bounds that consider both the width and depth of the networks simultaneously.
\end{abstract}

% REQUIRED

\section{Introduction}
Deep neural networks (DNNs) with the rectified linear unit (ReLU) activation function \cite{glorot2011deep} have become increasingly popular in scientific and engineering applications, including image classification \cite{krizhevsky2017imagenet,he2015delving}, regularization \cite{czarnecki2017sobolev}, and dynamic programming \cite{finlay2018lipschitz,werbos1992approximate}. DNNs can often take advantage in approximating functions with special structures and alleviating the curse of dimensionality \cite{poggio2017and,telgarsky2016benefits,yarotsky2017error}. Therefore, finding the proper structure of approximating functions is necessary. Based on the results in \cite{lu2021deep,shen2022optimal,siegel2022optimal,yang2023nearly,yang2023nearlys,he2023optimal}, if we consider the approximation of DNNs in Sobolev spaces $W^{n,p}([0,1]^d)$, the approximation rate measured by the $W^{m,p}$ for $m< n$, $1\le p\le \infty$, and $m,n\in\sN$ is $\fO\left(M^{-\frac{2(m-n)}{d}}\right)$ (up to logarithmic factors). Although this is an optimal rate, it still suffers from the curse of dimensionality. Here $M$ is the number of the parameters in DNNs. The definition of Sobolev spaces is shown as follows:\begin{definition}[Sobolev Space \cite{evans2022partial}]
			Denote $\Omega$ as $[0,1]^d$, $D$ as the weak derivative of a single variable function and $D^{\boldsymbol{\alpha}}=D^{\alpha_1}_1D^{\alpha_2}_2\ldots D^{\alpha_d}_d$ as the partial derivative where $\boldsymbol{\alpha}=[\alpha_{1},\alpha_{2},\ldots,\alpha_d]^T$ and $D_i$ is the derivative in the $i$-th variable. Let $n\in\sN$ and $1\le p\le \infty$. Then we define Sobolev spaces\[W^{n, p}(\Omega):=\left\{f \in L_p(\Omega): D^{\boldsymbol{\alpha}} f \in L_p(\Omega) \text { for all } \boldsymbol{\alpha} \in \sN^d \text { with }|\boldsymbol{\alpha}| \leq n\right\}\] with a norm \[\|f\|_{W^{n, p}(\Omega)}:=\left(\sum_{0 \leq|\alpha| \leq n}\left\|D^{\alpha} f\right\|_{L_p(\Omega)}^p\right)^{1 / p},\] if $p<\infty$, and $\|f\|_{W^{n, \infty}(\Omega)}:=\max_{0 \leq|\alpha| \leq n}\left\|D^{\alpha} f\right\|_{L_\infty(\Omega)}$.

   Furthermore, for $\boldsymbol{f}=(f_1,\ldots,f_d)$, $\boldsymbol{f}\in W^{1,\infty}(\Omega,\sR^d)$ if and only if $ f_i\in W^{1,\infty}(\Omega)$ for each $i=1,2,\ldots,d$ and \[\|\boldsymbol{f}\|_{W^{1,\infty}(\Omega,\sR^d)}:=\max_{i=1,\ldots,d}\{\|f_i\|_{W^{1,\infty}(\Omega)}\}.\] When $p=2$, denote $W^{n,2}(\Omega)$ as $H^n(\Omega)$ for $n\in\sN_+$.
\end{definition}

One of a proper function spaces are widely used is spectral Barron spaces \cite{barron1993universal,bach2017breaking,klusowski2018approximation,lu2021priori,siegel2022sharp,yang2022approximation,chen2023regularity} and representational Barron spaces \cite{weinan2019barron,weinan2019priori,ma2022barron,chen2021representation}. In such kind of spaces, the functions can be approximated by shallow neural networks by $\fO\left(W^{-\frac{1}{2}}\right)$ where $W$ is the number of the parameters in neural networks. However, the space is limit and the theory of Barron spaces in deep neural networks is still needed to be established.

Note that in the Sobolev space approximation results \cite{lu2021deep,shen2022optimal,siegel2022optimal,yang2023nearly,yang2023nearlys,he2023optimal,he2023optimal}, if the function is smooth enough, such as $f(\boldsymbol{x})\in W^{2d,p}([0,1]^d)$, the approximation rate becomes $\fO(M^{-4})$ measured by the $L_p$ norms. However, this space is limited, especially when $d$ is large, as it requires the target function to have $2d$th derivatives in a single direction. This requirement is too strong, limiting the generality of the space. A proper space that maintains the approximation rate without requiring such smoothness is needed. Here, Korobov spaces are such spaces.
\begin{definition}[Korobov Space \cite{bungartz2004sparse,korobov1959approximate,korobov1963number}]
    The Korobov spaces $X^{2,p}(\Omega)$, defined for $2\le p\le +\infty$, are characterized by the following expression:\[X^{2,p}(\Omega)=\left\{f\in L_p(\Omega)\mid f|_{\partial\Omega}=0,D^{\boldsymbol{k}}f\in L_p(\Omega),|\boldsymbol{k}|_\infty\le 2\right\}\] with $|\boldsymbol{k}|_\infty=\max_{1\le j\le d}k_j$ and norm\[|f|_{2,p}=\left\|\frac{\partial^{2d} f}{\partial x_1^2\cdots\partial x_d^2}\right\|_{L_p(\Omega)}.\]
\end{definition}
The approximation of neural networks for Korobov spaces has been considered in \cite{montanelli2019new,mao2022approximation,blanchard2021shallow,suzuki2018adaptivity}. In \cite{montanelli2019new}, they consider the approximation by ReLU-based DNNs with the error measured by $L_\infty$ norms for $X^{2,\infty}$. The error is $\fO(M^{-2})$. In \cite{blanchard2021shallow}, they obtain such a rate by using a smooth activation function and show that this rate is optimal under the assumption of a continuous function approximator. In \cite{suzuki2018adaptivity}, they obtain a similar rate for mix-Besov spaces, where Korobov spaces are also known as mix-Sobolev spaces. In \cite{mao2022approximation}, they consider the approximation of deep convolutional neural networks for $X^{2,p}$ with error $\fO(M^{-2+\frac{1}{p}})$ measured by $L_p$-norms. When $p=\infty$, the order is the same as that in \cite{montanelli2019new,blanchard2021shallow,suzuki2018adaptivity}.

Based on the above references, there are still questions that need to be solved. One is to determine the best approximation rate of Korobov spaces by DNNs. Although in \cite{blanchard2021shallow}, they stated that their result is optimal under the assumptions of continuous function approximator models. However, based on this assumption, the approximation of DNNs does not have an advantage over traditional methods in terms of error. In this paper, we are able to approximate functions in Korobov spaces by DNNs with an error of $\fO(M^{-4})$, which is called \textit{super-convergence} of DNNs \cite{lu2021deep,shen2022optimal,siegel2022optimal,yang2023nearly,yang2023nearlys,he2023optimal}. The proof method in these cases uses the bit-extraction technique introduced in \cite{bartlett1998almost,bartlett2019nearly} to represent piecewise polynomial functions on the fixed regular grid with $M$ cells using only $\fO(\sqrt{M})$ (Proposition \ref{point}). Furthermore, all results in \cite{montanelli2019new,mao2022approximation,blanchard2021shallow,suzuki2018adaptivity} establish the approximation of DNNs with a fixed depth or establish a relationship between the depth and width of DNNs. However, these results cannot reveal the behavior of the freedom of depth in deep neural networks for the approximation task. This limitation hinders our understanding of why deep neural networks are needed as opposed to shallow networks in this context.

Further, the approximation error for DNNs with respect to  Sobolev norms need to be addressed. Sobolev training \cite{czarnecki2017sobolev,son2021sobolev,vlassis2021sobolev} of DNNs has had a significant impact on scientific and engineering fields, including solving partial differential equations \cite{Lagaris1998,weinan2017deep,raissi2019physics,de2022error,lu2022priori,lu2021priori}, operator learning \cite{lu2021learning,liu2022deep}, network compression \cite{sau2016deep}, distillation \cite{hinton2015distilling,rusu2015policy}, regularization \cite{czarnecki2017sobolev}, and dynamic programming \cite{finlay2018lipschitz,werbos1992approximate}, etc. These loss functions enable models to learn DNNs that can approximate the target function with small discrepancies in both magnitude and derivative. If the approximation error of DNNs for Korobov spaces can be established under Sobolev norms, it can provide theoretical support for DNNs to solve a variety of physical problems, such as solving the electronic Schr{\"o}dinger equation in Hilbert spaces \cite{yserentant2004regularity}, which proves that the solution of the electronic Schr{\"o}dinger equation belongs to the mix-Sobolev spaces. All results in \cite{montanelli2019new,mao2022approximation,blanchard2021shallow,suzuki2018adaptivity} do not consider them in Sobolev norms.

In this paper, we address all the questions above. We consider the approximation of functions \(f(\boldsymbol{x}) \in X^{2,\infty}([0,1]^d)\). All the results established in this paper can be easily generalized to \(X^{2,p}([0,1]^d)\) by combining the findings in \cite{mao2022approximation}. We first establish a DNN with depth \(\mathcal{O}(L(\log_2 L)^{3d})\) and width \(\mathcal{O}(L(\log_2 L)^{3d})\) to approximate \(f \in X^{2,\infty}\) with an error of \(\mathcal{O}(N^{-1}L^{-1})\) measured by \(H^1\) norms (Theorem \ref{H1}) and an error of \(\mathcal{O}(N^{-2}L^{-2})\) measured by \(L_p\) norms (Corollary \ref{L2}). These results achieve the optimal outcomes of continuous function approximators \cite{devore1989optimal}, and the results are nonasymptotic, providing error bounds in terms of both the width and depth of networks simultaneously. Then we obtain the \textit{super-convergence} rate for \(f \in X^{2,\infty}\) by using the bit-extraction technique introduced in \cite{bartlett1998almost,bartlett2019nearly}. We establish DNNs with depth \(\mathcal{O}(L(\log_2 L)^{d})\) and width \(\mathcal{O}(L(\log_2 L)^{d+1})\) to approximate \(f \in X^{2,\infty}\) with an error of \(\mathcal{O}(N^{-2}L^{-2}(\log_2 N\log_2 L)^{d-1})\) measured by \(H^1\) norms (Theorem \ref{H1 K}) and an error of \(\mathcal{O}(N^{-4}L^{-4}(\log_2 N\log_2 L)^{d-1})\) measured by \(L_p\) norms (Theorem \ref{L2 K}). Then we show that both our results are nearly optimal (Theorems \ref{Optimality} and \ref{OptimalityH1}) based on the bounds of Vapnik--Chervonenkis dimension of DNNs and their derivatives. These results are better than those in \cite{montanelli2019new,mao2022approximation,blanchard2021shallow,suzuki2018adaptivity} and show the benefit of deep neural networks compared to traditional methods. %Finally, we obtain the generalization error of Korobov spaces measured by the \(H^1\) loss functions (Theorem \ref{gen thm}).

The rest of the paper unfolds as follows: In Section~\ref{preliminaries}, we begin by consolidating the notations, propositions, and lemmas related to DNNs, Sobolev spaces, and Korobov spaces. Subsequently, in Section~\ref{continuouss}, we delve into the establishment of DNN approximation with the error through \(L_p\) norms and \(H^1\) norms. The attained rate mirrors that of the optimal continuous function approximator. Sections~\ref{LopKK} and \ref{hop} are dedicated to unveiling DNNs endowed with \textit{super-convergence} rates for Korobov spaces. %Finally, we obtain the generalization error of Korobov spaces with loss functions defined by Sobolev norms.

\section{Preliminaries}\label{preliminaries}
\subsection{Notations of deep neural networks}
		Let us summarize all basic notations used in the DNNs as follows:
		
		\textbf{1}. Matrices are denoted by bold uppercase letters. For example, $\boldsymbol{A}\in\sR^{m\times n}$ is a real matrix of size $m\times n$ and $\boldsymbol{A}^T$ denotes the transpose of $\boldsymbol{A}$.
		
		\textbf{2}. Vectors are denoted by bold lowercase letters. For example, $\boldsymbol{v}\in\sR^n$ is a column vector of size $n$. Furthermore, denote $\boldsymbol{v}(i)$ as the $i$-th elements of $\boldsymbol{v}$.
		
		\textbf{3}. For a $d$-dimensional multi-index $\boldsymbol{\alpha}=[\alpha_1,\alpha_2,\cdots\alpha_d]\in\sN^d$, we denote several related notations as follows: $(a)~ |\boldsymbol{\alpha}|=\left|\alpha_1\right|+\left|\alpha_2\right|+\cdots+\left|\alpha_d\right|$; $(b)~\boldsymbol{x}^\alpha=x_1^{\alpha_1} x_2^{\alpha_2} \cdots x_d^{\alpha_d},~ \boldsymbol{x}=\left[x_1, x_2, \cdots, x_d\right]^T$; $ (c)~\boldsymbol{\alpha} !=\alpha_{1} ! \alpha_{2} ! \cdots \alpha_{d} !.$
		
		\textbf{4}. Let $B_{r,|\cdot|}(\boldsymbol{x})\subset\sR^d$ be the closed ball with a center $\boldsymbol{x}\in\sR^d$ and a radius $r$ measured by the Euclidean distance. Similarly, $B_{r,\|\cdot\|_{\ell_\infty}}(\boldsymbol{x})\subset\sR^d$ be the closed ball with a center $\boldsymbol{x}\in\sR^d$ and a radius $r$ measured by the $\ell_\infty$-norm.
		
		\textbf{5}. Assume $\boldsymbol{n}\in\sN_+^n$, then $f(\boldsymbol{n})=\fO(g(\boldsymbol{n}))$ means that there exists positive $C$ independent of $\boldsymbol{n},f,g$ such that $f(\boldsymbol{n})\le Cg(\boldsymbol{n})$ when all entries of $\boldsymbol{n}$ go to $+\infty$.
		
		\textbf{6}. Define $\sigma(x)=\max\{0,x\}$. We call the neural networks with activation function $\sigma$ as $\sigma$ neural networks ($\sigma$-NNs). With the abuse of notations, we define $\sigma:\sR^d\to\sR^d$ as $\sigma(\boldsymbol{x})=\left[\begin{array}{c}
			\sigma(x_1) \\
			\vdots \\
			 \sigma(x_d)
		\end{array}\right]$ for any $\boldsymbol{x}=\left[x_1, \cdots, x_d\right]^T \in\sR^d$.
		
		\textbf{7}. Define $L,N\in\sN_+$, $N_0=d$ and $N_{L+1}=1$, $N_i\in\sN_+$ for $i=1,2,\ldots,L$, then a $\sigma$-NN $\phi$ with the width $N$ and depth $L$ can be described as follows:\[\boldsymbol{x}=\tilde{\boldsymbol{h}}_0 \stackrel{W_1, b_1}{\longrightarrow} \boldsymbol{h}_1 \stackrel{\sigma}{\longrightarrow} \tilde{\boldsymbol{h}}_1 \ldots \stackrel{W_L, b_L}{\longrightarrow} \boldsymbol{h}_L \stackrel{\sigma}{\longrightarrow} \tilde{\boldsymbol{h}}_L \stackrel{W_{L+1}, b_{L+1}}{\longrightarrow} \phi(\boldsymbol{x})=\boldsymbol{h}_{L+1},\] where $\boldsymbol{W}_i\in\sR^{N_i\times N_{i-1}}$ and $\boldsymbol{b}_i\in\sR^{N_i}$ are the weight matrix and the bias vector in the $i$-th linear transform in $\phi$, respectively, i.e., $\boldsymbol{h}_i:=\boldsymbol{W}_i \tilde{\boldsymbol{h}}_{i-1}+\boldsymbol{b}_i, ~\text { for } i=1, \ldots, L+1$ and $\tilde{\boldsymbol{h}}_i=\sigma\left(\boldsymbol{h}_i\right),\text{ for }i=1, \ldots, L.$ In this paper, an DNN with the width $N$ and depth $L$, means
		(a) The maximum width of this DNN for all hidden layers less than or equal to $N$.
		(b) The number of hidden layers of this DNN less than or equal to $L$.

\subsection{Korobov spaces}

In this paper, our focus centers on investigating the efficacy of deep networks in approximating functions within Korobov spaces, gauged through the measurement of Sobolev norms.

The approach employed for approximating functions within the Korobov space relies on sparse grids, as introduced in \cite{bungartz2004sparse}. For any $f\in X^{2,p}(\Omega)$, the representation takes the form: \[f(\boldsymbol{x})=\sum_{\boldsymbol{l}}\sum_{\boldsymbol{i}\in\boldsymbol{i}_{\boldsymbol{l}}}v_{\boldsymbol{l},\boldsymbol{i}}\phi_{\boldsymbol{l},\boldsymbol{i}}(\boldsymbol{x})\] where \begin{equation}
\boldsymbol{i}_{\boldsymbol{l}}:=\left\{\boldsymbol{i} \in\sN^d: \boldsymbol{1} \leq \boldsymbol{i} \leq 2^{\boldsymbol{l}}-\boldsymbol{1}, i_j \text{ odd for all } 1 \leq j \leq d\right\}.
\end{equation} The basis function $\phi_{\boldsymbol{l},\boldsymbol{i}}(\boldsymbol{x})$ is constructed using hat functions and grid points: \[\boldsymbol{x}_{\boldsymbol{l},\boldsymbol{i}}=(x_{l_1,i_1},\cdots,x_{l_d,i_d}):=\boldsymbol{i}\cdot 2^{-\boldsymbol{l}}=:\boldsymbol{i}\cdot \boldsymbol{h}_{\boldsymbol{l}}=\boldsymbol{i}\cdot (h_{l_1},\cdots,h_{l_d})\] In a piecewise linear setting, the fundamental choice for a 1D basis function is the standard hat function $\phi(x)$, defined as:
$$
\phi(x):= \begin{cases}1-|x|, & \text { if } x \in[-1,1] \\ 0, & \text { otherwise }\end{cases}
$$
The standard hat function $\phi(x)$ can be utilized to generate any $\phi_{l_j, i_j}\left(x_j\right)$ with support $\left[x_{l_j, i_j}-h_{l_j}, x_{l_j, i_j}+h_{l_j}\right]=\left[\left(i_j-1\right) h_{l_j},\left(i_j+\right.\right.$ 1) $h_{l_j}$ ] through dilation and translation:
$$
\phi_{l_j, i_j}\left(x_j\right):=\phi\left(\frac{x_j-i_j \cdot h_{l_j}}{h_{l_j}}\right) .
$$
The resulting 1D basis functions serve as inputs for the tensor product construction, yielding a suitable piecewise $d$-linear basis function at each grid point $\boldsymbol{x}_{\boldsymbol{l}, \boldsymbol{i}}$
$$
\phi_{\boldsymbol{l}, \boldsymbol{i}}(\boldsymbol{x}):=\prod_{j=1}^d \phi_{l_j, i_j}\left(x_j\right)
$$

The following two lemmas pertain to the truncation error in the hierarchical representation of Korobov spaces.

 \begin{lemma}[{\cite[Lemma 3.3]{bungartz2004sparse}}]\label{boundconstant}
      Let $f \in X^{2,\infty}(\Omega)$ be given in its hierarchical representation \[f(\boldsymbol{x})=\sum_{\boldsymbol{l}}\sum_{\boldsymbol{i}\in\boldsymbol{i}_{\boldsymbol{l}}}v_{\boldsymbol{l},\boldsymbol{i}}\phi_{\boldsymbol{l},\boldsymbol{i}}(\boldsymbol{x}).\] Then, the following estimates for the hierarchical coefficients $v_{\boldsymbol{l},\boldsymbol{i}}$ hold:
$$
\left|v_{\boldsymbol{l},\boldsymbol{i}}\right| \leq 2^{-d} \cdot 2^{-\left|\boldsymbol{l}\right|_1} \cdot|f|_{2, \infty}.
$$
  \end{lemma}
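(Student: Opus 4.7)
The plan is to derive an explicit integral representation of $v_{\boldsymbol{l},\boldsymbol{i}}$ in terms of the mixed partial derivative $\partial^{2d} f/(\partial x_1^2\cdots \partial x_d^2)$, and then extract the pointwise bound from it.

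First, I would handle the one-dimensional case. By the hierarchical construction, and because the coarser piecewise linear interpolant at $x_{l,i}$ is the midpoint of the two neighboring coarse nodal values,
\[
v_{l,i} \;=\; f(x_{l,i}) - \tfrac{1}{2}\bigl[f(x_{l,i}-h_l)+f(x_{l,i}+h_l)\bigr].
\]
Applying Taylor's theorem with integral remainder around $x_{l,i}$ kills the zeroth- and first-order terms by the symmetry of the three-point stencil, yielding
\[
v_{l,i} \;=\; -\tfrac{1}{2}\int_{x_{l,i}-h_l}^{x_{l,i}+h_l} \psi_{l,i}(t)\,f''(t)\,dt,
\]
where $\psi_{l,i}$ is the nonnegative tent function of height $h_l$ supported on $[x_{l,i}-h_l,x_{l,i}+h_l]$, satisfying $\int \psi_{l,i}(t)\,dt = h_l^2$. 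In particular $|v_{l,i}| \le \tfrac{1}{2}h_l^2\,\|f''\|_{L_\infty}$.

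Next, I would extend this to $d$ dimensions by tensor product. Since both the hat basis $\phi_{\boldsymbol{l},\boldsymbol{i}}$ and the grid points $\boldsymbol{x}_{\boldsymbol{l},\boldsymbol{i}}$ are defined coordinatewise, the hierarchical surplus factors as the successive application of the one-dimensional surplus operator in each direction,
\[
v_{\boldsymbol{l},\boldsymbol{i}} \;=\; \Bigl(\prod_{j=1}^{d}\tfrac{1}{2}\Delta^{2}_{j,h_{l_j}}\Bigr) f\bigl(\boldsymbol{x}_{\boldsymbol{l},\boldsymbol{i}}\bigr),
\]
where $\Delta^{2}_{j,h}$ is the centered second difference in $x_j$ with step $h$ (up to sign). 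Iterating the 1D integral representation in each coordinate one at a time and invoking Fubini yields
\[
v_{\boldsymbol{l},\boldsymbol{i}} \;=\; (-1)^d\,2^{-d}\int_{\square}\Bigl(\prod_{j=1}^{d}\psi_{l_j,i_j}(t_j)\Bigr)\frac{\partial^{2d} f}{\partial x_1^2\cdots \partial x_d^2}(\boldsymbol{t})\,d\boldsymbol{t},
\]
with $\square = \prod_{j=1}^d [x_{l_j,i_j}-h_{l_j},\,x_{l_j,i_j}+h_{l_j}]$. Bounding the mixed derivative by $|f|_{2,\infty}$ and using $\int \psi_{l_j,i_j}(t_j)\,dt_j = h_{l_j}^2 = 2^{-2l_j}$ gives $|v_{\boldsymbol{l},\boldsymbol{i}}|\le 2^{-d}\cdot 2^{-2|\boldsymbol{l}|_1}\cdot |f|_{2,\infty}$, which implies (and in fact strengthens) the stated bound.

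The main obstacle is the rigorous justification of the tensor-product integral representation: applying the Taylor/integration-by-parts identity coordinate by coordinate requires interchanging each 1D difference with the remaining $(d-1)$-dimensional structure. The hypothesis $D^{\boldsymbol{k}} f \in L_\infty(\Omega)$ for all $|\boldsymbol{k}|_\infty\le 2$, combined with Fubini's theorem, legitimizes each swap; proceeding by induction on $d$ keeps the bookkeeping tractable. The boundary condition $f|_{\partial\Omega}=0$ is not actually needed here since every node $\boldsymbol{x}_{\boldsymbol{l},\boldsymbol{i}}$ with $\boldsymbol{i}\in\boldsymbol{i}_{\boldsymbol{l}}$ lies in the interior, so the supports $\square$ are contained in $\Omega$.
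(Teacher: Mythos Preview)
The paper does not supply its own proof of this lemma; it is quoted verbatim from \cite{bungartz2004sparse} and used as a black box. Your proposal is correct and is precisely the classical argument given in that reference: write the one-dimensional surplus as a scaled second central difference, convert it to an integral of $f''$ against the hat kernel $\psi_{l,i}=h_l\phi_{l,i}$, and then tensorize via Fubini/induction to obtain
\[
v_{\boldsymbol{l},\boldsymbol{i}}=(-1)^d\,2^{-d}\int_{\mathrm{supp}\,\phi_{\boldsymbol{l},\boldsymbol{i}}}\Bigl(\prod_{j=1}^d h_{l_j}\phi_{l_j,i_j}(t_j)\Bigr)\,\frac{\partial^{2d}f}{\partial x_1^2\cdots\partial x_d^2}(\boldsymbol{t})\,d\boldsymbol{t}.
\]
You even recover the sharp bound $|v_{\boldsymbol{l},\boldsymbol{i}}|\le 2^{-d}\cdot 2^{-2|\boldsymbol{l}|_1}\cdot|f|_{2,\infty}$, which is the form actually stated in \cite[Lemma~3.3]{bungartz2004sparse}; the present paper quotes the weaker exponent $2^{-|\boldsymbol{l}|_1}$, and that is all it uses downstream. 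One small correction to your closing remark: for extremal odd indices (e.g.\ $i_j=1$ or $i_j=2^{l_j}-1$) the support $\square$ does touch $\partial\Omega$, though it remains contained in $\overline{\Omega}$, so the integral identity is still valid without invoking $f|_{\partial\Omega}=0$.
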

  
%Based on the \cite[Lemma 3.13]{bungartz2004sparse}, we have that
\begin{lemma}[{\cite[Lemma 3.13]{bungartz2004sparse}}]\label{err-first}
    Set
$$
f_n^{(1)}(\boldsymbol{x})=\sum_{|\boldsymbol{l}|_1 \leq n+d-1} \sum_{\boldsymbol{i} \in \boldsymbol{i}_{\boldsymbol{l}}} v_{\boldsymbol{l}, \boldsymbol{i}} \phi_{\boldsymbol{l}, \boldsymbol{i}}(\boldsymbol{x}),
$$
and for any $f \in X^{2, p}(\Omega)$, the approximation error satisfies
$$
\left\|f-f_n^{(1)}\right\|_{L_\infty(\Omega)}=\mathcal{O}\left(M^{-2}\left|\log _2 M\right|^{3(d-1)}\right),~\left\|f-f_n^{(1)}\right\|_{H^1(\Omega)}=\mathcal{O}\left(M^{-1}\left|\log _2 M\right|^{(d-1)}\right)
$$
where $M=\fO(2^nn^{d-1})$.
%and, for any accuracy $\epsilon>0$,
%\begin{align}
%\left\|f-f_n^{(1)}\right\|_{L_\infty(\Omega)}=\epsilon \quad& \text { with } \quad %M=\fO\left(\epsilon^{-\frac{1}{2}}\left|\log _2 \epsilon\right|^{\frac{3}{2}(d-1)}\right)\notag\\\left\|f-f_n^{(1)}\right\|_{H^1(\Omega)}=\epsilon \quad& \text { with } \quad M=\fO\left(\epsilon^{-1}\left|\log _2 \epsilon\right|^{d-1}\right)
%\end{align}
\end{lemma}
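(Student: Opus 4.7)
The plan is to bound the error by the tail of the hierarchical expansion, namely $f - f_n^{(1)} = \sum_{|\boldsymbol{l}|_1 > n+d-1} \sum_{\boldsymbol{i} \in \boldsymbol{i}_{\boldsymbol{l}}} v_{\boldsymbol{l},\boldsymbol{i}} \phi_{\boldsymbol{l},\boldsymbol{i}}(\boldsymbol{x})$. The key structural fact I would exploit is that, for a fixed level $\boldsymbol{l}$, the basis functions $\{\phi_{\boldsymbol{l},\boldsymbol{i}}\}_{\boldsymbol{i} \in \boldsymbol{i}_{\boldsymbol{l}}}$ have essentially disjoint supports: the one-dimensional hat functions at different odd indices on the same level only touch at zero-valued endpoints, so the tensor-product basis functions are pointwise dominated by $\max_{\boldsymbol{i}} |v_{\boldsymbol{l},\boldsymbol{i}}|$. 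Combining this disjointness with the coefficient decay from Lemma \ref{boundconstant} gives a geometric decay in $|\boldsymbol{l}|_1$ for each level's contribution, after which everything reduces to a multi-index sum.

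\textbf{Executing the two estimates.} For the $L_\infty$ bound, the level-$\boldsymbol{l}$ contribution is of order $|f|_{2,\infty}\,2^{-2|\boldsymbol{l}|_1}$ when combined with the sharp form of the coefficient estimate. The number of $\boldsymbol{l} \in \sN_+^d$ with $|\boldsymbol{l}|_1 = k$ equals $\binom{k-1}{d-1} = \mathcal{O}(k^{d-1})$, so summing over all $k > n+d-1$ is dominated by $\mathcal{O}(n^{d-1}\,2^{-2n})$. Substituting $n \sim \log_2 M$ and $2^{-2n} \sim M^{-2}(\log_2 M)^{2(d-1)}$, which follows from $M = \mathcal{O}(2^n n^{d-1})$, converts this into the advertised $\mathcal{O}(M^{-2}(\log_2 M)^{3(d-1)})$. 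For the $H^1$ bound, differentiating $\phi_{\boldsymbol{l},\boldsymbol{i}} = \prod_{k} \phi_{l_k, i_k}$ in the $x_j$-direction replaces the $j$-th factor by its derivative, whose $L_\infty$-norm is $2^{l_j}$. Because the disjoint-support property persists under differentiation in a fixed direction, the same argument yields a level-$\boldsymbol{l}$ bound of $\mathcal{O}(2^{-2|\boldsymbol{l}|_1 + l_j})$ for $\partial_{x_j}$. Summing over $\boldsymbol{l}$ with $|\boldsymbol{l}|_1 = k$ and over $k > n+d-1$ produces $\mathcal{O}(n^{d-1} 2^{-n})$, which rewrites as $\mathcal{O}(M^{-1}(\log_2 M)^{d-1})$.

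\textbf{Main obstacle.} The principal technical difficulty is the careful bookkeeping of the multi-index sums: one must verify that the extra factor $2^{l_j}$ arising in the $H^1$ estimate is always compensated by the $2^{-2|\boldsymbol{l}|_1}$ coefficient decay (using $l_j \leq |\boldsymbol{l}|_1$, which leaves at least a net $2^{-|\boldsymbol{l}|_1}$), and must correctly convert $n$-based rates to $M$-based rates so that the logarithmic exponents come out to $3(d-1)$ and $(d-1)$ respectively. Since the statement is quoted directly from \cite[Lemma 3.13]{bungartz2004sparse}, the complete details follow from the sparse-grid framework developed there; in the present paper I would simply invoke the lemma as a black box once the sparse-grid construction and coefficient bound are in place.
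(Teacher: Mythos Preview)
Your proposal is correct and matches the paper's treatment: the paper does not prove this lemma at all but simply imports it verbatim from \cite[Lemma~3.13]{bungartz2004sparse}, exactly as you note in your final paragraph. Your sketch is the standard Bungartz--Griebel argument (tail of the hierarchical series, disjoint supports at each level, coefficient decay, combinatorial level count, then translation from $n$ to $M$).

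Two small remarks on the details of your sketch. First, you correctly use the sharp coefficient bound $|v_{\boldsymbol{l},\boldsymbol{i}}|\lesssim 2^{-2|\boldsymbol{l}|_1}$; note that the paper's Lemma~\ref{boundconstant} is actually stated with only $2^{-|\boldsymbol{l}|_1}$, which would \emph{not} suffice to get the $M^{-2}$ rate, so you are right to appeal to the ``sharp form'' from the original source. Second, in your $H^1$ step the intermediate claim ``$\mathcal{O}(n^{d-1}2^{-n})$'' and its conversion to $M^{-1}(\log_2 M)^{d-1}$ do not match arithmetically: in fact the level sum $\sum_{|\boldsymbol{l}|_1=k}2^{-2k+l_j}$ is $\mathcal{O}(2^{-k})$ (the inner sum $\sum_{|\boldsymbol{l}|_1=k}2^{l_j}$ is $\mathcal{O}(2^k)$, not $\mathcal{O}(k^{d-1}2^k)$), so the tail is $\mathcal{O}(2^{-n})$, which \emph{does} convert to $M^{-1}(\log_2 M)^{d-1}$. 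Your final answer is right; only the intermediate bookkeeping needs tightening.
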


The remaining challenge within the approximation segment is to represent $\phi_{\boldsymbol{l}, \boldsymbol{i}}(\boldsymbol{x})$ through deep neural networks.

  \subsection{Propositions of Sobolev spaces and ReLU neural networks}

 Before establishing DNNs for approximating functions measured by $L_p$ norms and $H^1$ norms, we require several lemmas and propositions related to DNNs.

The proofs of the following two propositions can be found in \cite{yang2023nearly}. For clarity, we present the proofs here.
\begin{proposition}\label{2prop}
			For any $N,L\in\sN_+$ and $a>0$, there is a $\sigma$-NN $\phi$ with the width $15N$ and depth $2L$ such that $\|\phi\|_{W^{1,\infty}((-a,a)^2)}\le 12a^2$ and \begin{equation}
				\left\|\phi(x,y)-xy\right\|_{W^{1,\infty}((-a,a)^2)}\le 6a^2N^{-L}.
			\end{equation} Furthermore, \begin{equation}\phi(0,y)=\frac{\partial \phi(0,y)}{\partial y}=0,~y\in(-a,a).\label{zero}\end{equation}
		\end{proposition}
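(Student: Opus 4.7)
The plan is to reduce bivariate multiplication to univariate squaring via the polarization identity $xy = \tfrac14[(x+y)^2 - (x-y)^2]$, then build an even $\sigma$-NN approximant of $t\mapsto t^2$ and assemble $\phi$ from two copies of it. This reduces both the construction and the verification of the vanishing boundary conditions to an essentially one-dimensional question.

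First I would construct an even $\sigma$-NN $\psi\colon\mathbb{R}\to\mathbb{R}$ of width $\mathcal{O}(N)$ and depth $\mathcal{O}(L)$ with $\|\psi - (\cdot)^2\|_{W^{1,\infty}((-2a,2a))} = \mathcal{O}(a^2 N^{-L})$. Evenness is imposed by preprocessing with $|t| = \sigma(t)+\sigma(-t)$ and applying the squaring block to $|t|\in[0,2a]$. For the squaring block itself I would follow the iterated-subdivision scheme implicit in \cite{yang2023nearly}: a width-$N$, depth-$1$ piece refines an interval into $N$ equispaced cells, and composing $L$ such pieces produces a partition with $\sim N^L$ cells, on which a linear output layer interpolates $t^2$ piecewise linearly. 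The standard piecewise-linear interpolation bounds, applied after rescaling to $[-2a,2a]$, give the $W^{1,\infty}$ rate with mesh parameter $h\asymp N^{-L}$.

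Next I would set $\phi(x,y) := \tfrac14\bigl[\psi(x+y) - \psi(x-y)\bigr]$. Writing
\begin{equation*}
\phi(x,y) - xy = \tfrac14\bigl[(\psi - (\cdot)^2)(x+y) - (\psi - (\cdot)^2)(x-y)\bigr]
\end{equation*}
and using the chain rule with the affine maps $(x,y)\mapsto x\pm y$, whose Jacobian entries are $\pm 1$, yields $\|\phi - xy\|_{W^{1,\infty}((-a,a)^2)} \le 6a^2 N^{-L}$; the uniform bound $\|\phi\|_{W^{1,\infty}}\le 12 a^2$ then follows from $\|xy\|_{W^{1,\infty}((-a,a)^2)}\le a^2+a$ together with the small error. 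Evenness of $\psi$ makes the vanishing conditions automatic: $\phi(0,y) = \tfrac14(\psi(y)-\psi(-y)) = 0$, and since $\psi'$ is odd wherever defined, $\partial_y\phi(0,y) = \tfrac14(\psi'(y)+\psi'(-y)) = 0$. Packaging two parallel copies of $\psi$ with the $2\times 2$ affine preprocessing $(x,y)\mapsto(x+y,x-y)$, the symmetrization channel implementing $|{\cdot}|$, and the linear output layer fits inside width $15N$ and depth $2L$ after a straightforward count.

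The hard part will be the univariate construction, specifically achieving the $W^{1,\infty}$ rate $N^{-L}$ with width only $\mathcal{O}(N)$. Representing a piecewise-linear interpolant with $N^L$ pieces explicitly would cost width $N^L$; the iterated-composition trick compresses this to $\mathcal{O}(NL)$ parameters, but enforcing that the slopes of the compressed interpolant match the nodal slopes of $t^2$ on every one of the $N^L$ cells — which is what the $W^{1,\infty}$ rate, as opposed to the classical Yarotsky $L^\infty$ rate, demands — is the delicate point. Once the univariate block is in place, the polarization identity, the symmetry computation, and the width/depth accounting are essentially mechanical.
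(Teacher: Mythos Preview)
Your plan is correct and close in spirit to the paper's, but the specific decomposition differs in an instructive way. The paper also reduces to a univariate squaring approximant $\psi$, but uses the three-term identity $xy = 2\bigl[(\tfrac{x+y}{2})^2 - (\tfrac{x}{2})^2 - (\tfrac{y}{2})^2\bigr]$ and sets $\phi(x,y) = 2\bigl[\psi(\tfrac{|x+y|}{2}) - \psi(\tfrac{|x|}{2}) - \psi(\tfrac{|y|}{2})\bigr]$; the vanishing conditions then follow from $\psi(0)=0$ (at $x=0$ the first and third terms cancel identically, leaving $-2\psi(0)$), rather than from evenness of $\psi$ as in your version. Your two-term polarization needs only two parallel copies of $\psi$ instead of three, so your width budget is looser; the paper's halved arguments keep $\psi$ on $(-a,a)$ rather than $(-2a,2a)$. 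For the univariate block---the part you flag as delicate---the paper does not build a generic mesh-refinement interpolant but invokes the explicit teeth-function series $\tilde\psi(t) = t - \sum_{i=1}^{s} T_i(t)/4^i$ with $T_i = T_1^{\circ i}$, citing \cite[Lemma~3.2]{hon2022simultaneous} and \cite[Lemma~5.1]{lu2021deep} for the $W^{1,\infty}$ rate $N^{-L}$ and for $\psi(0)=0$; so what you anticipate as the hard step is handled there by a black-box reference rather than reproved.
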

		
		\begin{proof}
			We first need to construct a neural network to approximate $x^2$ on $(-1,1)$, and the idea is similar with \cite[Lemma 3.2]{hon2022simultaneous} and \cite[Lemma 5.1]{lu2021deep}. The reason we do not use \cite[Lemma 3.4]{hon2022simultaneous} and \cite[Lemma 4.2]{lu2021deep} directly is that constructing $\phi(x,y)$ by translating a neural network in $W^{1,\infty}[0,1]$ will lose the proposition of $\phi(0.y)=0$. Here we need to define teeth functions $T_i$ on $\widetilde{x}\in [-1,1]$:$$
			T_1(\widetilde{x})= \begin{cases}2 |\widetilde{x}|, & |\widetilde{x}| \leq \frac{1}{2}, \\ 2(1-|\widetilde{x}|), & |\widetilde{x}|>\frac{1}{2},\end{cases}
			$$
			and
			$$
			T_i=T_{i-1} \circ T_1, \quad \text { for } i=2,3, \cdots .
			$$
			Define \[\widetilde{\psi}(\widetilde{x})=\widetilde{x}-\sum_{i=1}^s \frac{T_i(\widetilde{x})}{2^{2 i}},\]According to \cite[Lemma 3.2]{hon2022simultaneous} and \cite[Lemma 5.1]{lu2021deep}, we know $\psi$ is a neural network with the width $5N$ and depth $2L$ such that $\|\widetilde{\psi}(\widetilde{x})\|_{W^{1,\infty}((-1,1))}\le 2$,  $\|\widetilde{\psi}(\widetilde{x})-\widetilde{x}^2\|_{W^{1,\infty}((-1,1))}\le N^{-L}$ and $\psi(0)=0$.
			
			By setting $x=a\widetilde{x}\in(-a,a)$ for $\widetilde{x}\in(-1,1)$, we define \[\psi(x)=a^2 \widetilde{\psi}\left(\frac{x}{a}\right).\]Note that $x^2=a^2 \left(\frac{x}{a}\right)^2$, we have \[\begin{aligned}
				\|\psi(x)-x^2\|_{\mathcal{W}^{1, \infty}\left(-a, a\right)} & =a^2\left\|\widetilde{\psi}\left(\frac{x}{a}\right)-\left(\frac{x}{a} \right)^2\right\|_{\mathcal{W}^{1, \infty}(\left(-a, a\right))} \\
				& \leq a^2 N^{-L},
			\end{aligned}\] and $\psi(0)=0$, which will be used to prove Eq.~(\ref{zero}). 
			
			Then we can construct $\phi(x,y)$ as  \begin{equation}
				\phi(x,y)=2\left[\psi\left(\frac{|x+y|}{2}\right)-\psi\left(\frac{|x|}{2}\right)-\psi\left(\frac{|y|}{2}\right)\right]\label{zero1}\end{equation}where $\phi(x)$ is a neural network with the width $15N$ and depth $2L$ such that $\|\phi\|_{W^{1,\infty}((-a,a)^2)}\le 12a^2$ and \begin{equation}
				\left\|\phi(x,y)-xy\right\|_{W^{1,\infty}((-a,a)^2)}\le 6a^2N^{-L}.
			\end{equation} For the last equation Eq.~(\ref{zero}) is due to $\phi(x,y)$ in the proof can be read as Eq.~(\ref{zero1}) with $\psi(0)=0$.
		\end{proof}

		\begin{proposition}\label{prop1}
			For any $N, L, s \in\sN_+$with $s \geq 2$, there exists a $\sigma$-NN $\phi$ with the width $9(N+1)+s-1$ and depth $14 s(s-1) L$ such that $\|\phi\|_{\mathcal{W}^{1, \infty}((0,1)^s)} \leq 18$ and
			\begin{equation}
				\left\|\phi(\boldsymbol{x})-x_1 x_2 \cdots x_s\right\|_{\mathcal{W}^{1, \infty}((0,1)^s)} \leq 10(s-1)(N+1)^{-7 s L}.\label{mmprod}
			\end{equation}
			Furthermore, for any $i=1,2,\ldots, s$, if $x_i=0$, we will have \begin{equation}
				\phi(x_1,x_2,\ldots,x_{i-1},0,x_{i+1},\ldots,x_s)=\frac{\partial \phi(x_1,x_2,\ldots,x_{i-1},0,x_{i+1},\ldots,x_s)}{\partial x_j}=0, ~i\not=j.\label{equal0}
			\end{equation}
		\end{proposition}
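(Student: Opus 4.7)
The plan is to iterate Proposition~\ref{2prop} to build an $s$-fold product from $s-1$ sequential binary products. I would set $p_1 := x_1$ and define recursively $p_{k+1} := \phi_2^{(k)}(p_k, x_{k+1})$ for $k=1,\ldots,s-1$, where each $\phi_2^{(k)}$ is an instance of the network from Proposition~\ref{2prop} with width parameter proportional to $N+1$ and depth parameter $7sL$, and take $\phi(\boldsymbol{x}) := p_s$. Since at step $k$ the coordinates $x_{k+1},\ldots,x_s$ must still be available, I would carry them unchanged through the preceding blocks using $s-1$ identity channels realised by $x = \sigma(x)-\sigma(-x)$; this accounts for the $(s-1)$ term in the claimed width $9(N+1)+s-1$. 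Stacking $s-1$ blocks of depth $2\cdot 7sL = 14sL$ yields the total depth $14s(s-1)L$.

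For the error bound I would induct on $k$ with $\epsilon_k := \|p_k - x_1\cdots x_k\|_{W^{1,\infty}((0,1)^s)}$, using the decomposition
\begin{equation*}
p_{k+1} - x_1\cdots x_{k+1} = \bigl[\phi_2^{(k)}(p_k,x_{k+1}) - p_k x_{k+1}\bigr] + x_{k+1}\bigl[p_k - x_1\cdots x_k\bigr].
\end{equation*}
The first bracket is bounded in $W^{1,\infty}$ by $6a^2(N+1)^{-7sL}$ via Proposition~\ref{2prop}, where the radius $a$ is taken slightly larger than $1$ so that the iterates $p_k$ remain in its domain; the second bracket, by the Leibniz rule together with $|x_{k+1}|\le 1$ and a uniform bound on $\|\phi_2^{(k)}\|_{W^{1,\infty}}$, contributes at most a constant multiple of $\epsilon_k$. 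Summing the resulting recursion over the $s-1$ steps produces the advertised bound $\epsilon_s\le 10(s-1)(N+1)^{-7sL}$, and $\|\phi\|_{W^{1,\infty}}\le 18$ then follows by adding this error to the trivial estimate $\|x_1\cdots x_s\|_{W^{1,\infty}((0,1)^s)}\le s$.

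The main obstacle, and the reason this is not immediate from Proposition~\ref{2prop}, is the vanishing property~(\ref{equal0}). First I would observe that the explicit construction~(\ref{zero1}) is symmetric in its two arguments, so the same reasoning that yields~(\ref{zero}) also gives $\phi_2(x,0) = \partial_x\phi_2(x,0) = 0$. Supposing now $x_i=0$, a forward induction on $k$ shows $p_k|_{x_i=0} = 0$ and $\partial_{x_j} p_k|_{x_i=0} = 0$ for all $k\ge i$ and $j\ne i$: at step $k=i$ the block produces $p_i = \phi_2(p_{i-1},0)=0$ and, by the chain rule, $\partial_{x_j}p_i = \partial_1\phi_2(p_{i-1},0)\partial_{x_j}p_{i-1} + \partial_2\phi_2(p_{i-1},0)\delta_{j,i} = 0$; for each subsequent $k>i$, the identity $p_{k+1}=\phi_2(0,x_{k+1})=0$ together with the chain rule and~(\ref{zero}) annihilates both the value and the partials $\partial_{x_j}$ for $j\ne i$. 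Propagating down to $k=s$ yields~(\ref{equal0}), completing the proof plan.
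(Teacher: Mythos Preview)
Your plan is correct and is essentially the same as the paper's: both build $\phi_s$ by iterating the binary product $\phi_2$ of Proposition~\ref{2prop} as $\phi_n=\phi_2(\phi_{n-1},\sigma(x_n))$, cite/derive~(\ref{mmprod}) from the recursion, and prove~(\ref{equal0}) by induction on the number of factors using the chain rule together with~(\ref{zero}) and its symmetric counterpart. Two small remarks: since $x_j\in(0,1)$ you only need one channel $\sigma(x_j)=x_j$ per carried coordinate (not $\sigma(x)-\sigma(-x)$), which is what makes the $s-1$ in the width count work; and the paper simply cites \cite{hon2022simultaneous} for~(\ref{mmprod}) rather than redoing the error recursion you sketch.
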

		
		\begin{proof}
			The proof of the first inequality Eq.~(\ref{mmprod}) can be found in \cite[Lemma 3.5]{hon2022simultaneous}. The proof of Eq.~(\ref{equal0}) can be obtained via induction. For $s=2$, based on Proposition \ref{2prop}, we know there is a neural network $\phi_2$ satisfied Eq.~(\ref{equal0}). 
			
			Now assume that for any $i\le n-1$, there is a neural network $\phi_i$ satisfied Eq.~(\ref{equal0}).  $\phi_n$ in \cite{hon2022simultaneous} is constructed as\begin{equation}
				\phi_n(x_1,x_2,\ldots,x_n)=\phi_2(\phi_{n-1}(x_1,x_2,\ldots,x_{n-1}),\sigma(x_n)),
			\end{equation} which satisfies Eq.~(\ref{mmprod}). Then $\phi_n(x_1,x_2,\ldots,x_{i-1},0,x_{i+1},\ldots,x_n)=0$ for any $i=1,2,\ldots,n$. For $i=n$, we have \begin{equation}
				\frac{\phi(x_1,x_2,\ldots,0)}{\partial x_j}=\underbrace{\frac{\partial \phi_2(\phi_{n-1}(x_1,x_2,\ldots,x_{n-1}),0)}{\partial  \phi_{n-1}(x_1,x_2,\ldots,x_{n-1})}}_{=0\text{, by the property of $\phi_2$.}}\cdot \frac{\partial \phi_{n-1}(x_1,x_2,\ldots,x_{n-1})}{\partial x_j}=0.
			\end{equation}
			
			For $i<n$ and $j<n$, we have \begin{align}
				&\frac{\phi(x_1,x_2,\ldots,x_{i-1},0,x_{i+1},\ldots,x_n)}{\partial x_j}\notag\\=&\frac{\partial \phi_2(\phi_{n-1}(x_1,x_2,\ldots,x_{i-1},0,x_{i+1},\ldots,x_{n-1}),\sigma(x_n))}{\partial \phi_{n-1}(x_1,\ldots,0,x_{i+1},\ldots,x_{n-1})}\cdot \underbrace{\frac{\partial \phi_{n-1}(x_1,\ldots,0,x_{i+1},\ldots,x_{n-1})}{\partial x_j}}_{=0\text{, via induction.}}=0.
			\end{align}
			
			For $i<n$ and $j=n$, we have 
   \begin{align}
				&\frac{\phi(x_1,x_2,\ldots,x_{i-1},0,x_{i+1},\ldots,x_n)}{\partial x_n}\notag\\=&\underbrace{\frac{\partial \phi_2(\phi_{n-1}(x_1,x_2,\ldots,x_{i-1},0,x_{i+1},\ldots,x_{n-1}),\sigma(x_n))}{\partial \sigma(x_n)}}_{=0\text{, by the property of $\phi_2$.}}\cdot \frac{\mathrm{d} \sigma(x_n)}{\mathrm{d} x_n}=0.
			\end{align}
			
			Therefore, Eq.~(\ref{equal0}) is valid.
		\end{proof}
  
In the later analysis, we require a lemma concerning the composition of functions in Sobolev spaces: \begin{lemma}[{\cite[Corollary B.5]{guhring2020error}}]\label{composition}
			Let $d,m\in\sN_+$ and $\Omega_1\subset\sR^d$ and $\Omega_2\subset\sR^m$ both be open, bounded, and convex. Then for $\boldsymbol{f}\in W^{1,\infty}(\Omega_1,\sR^m)$ and $g\in W^{1,\infty}(\Omega_2)$ with $ {\rm ran}\boldsymbol{f}\subset \Omega_2$, we have \[\|g\circ\boldsymbol{f}\|_{W^{1,\infty}(\Omega_2)}\le \sqrt{d}{m\max\{\|g\|_{L^{\infty}(\Omega_2)}, \|g\|_{W^{1,\infty}(\Omega_2)}}\|\boldsymbol{f}\|_{W^{1,\infty}(\Omega_1,\sR^m)}\}.\]
		\end{lemma}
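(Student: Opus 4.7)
The plan is to split the $W^{1,\infty}$ norm of the composition into its $L^\infty$ part and its gradient part, and handle each with a standard Sobolev chain-rule argument. For the $L^\infty$ part, the inclusion ${\rm ran}\,\boldsymbol{f}\subset\Omega_2$ makes $g\circ\boldsymbol{f}$ well-defined pointwise on $\Omega_1$ and immediately gives $\|g\circ\boldsymbol{f}\|_{L^\infty(\Omega_1)}\le \|g\|_{L^\infty(\Omega_2)}$. For the gradient part, I would first use that $\Omega_1,\Omega_2$ are open, bounded, and convex, which identifies $W^{1,\infty}$ on either domain with the space of bounded Lipschitz functions, the Lipschitz constant being controlled by the essential supremum of the weak gradient. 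Consequently $\boldsymbol{f}$ is Lipschitz on $\Omega_1$, $g$ is Lipschitz on $\Omega_2$, and so $g\circ\boldsymbol{f}$ is Lipschitz on $\Omega_1$; in particular, by Rademacher's theorem it is classically differentiable almost everywhere and lies in $W^{1,\infty}(\Omega_1)$.

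To recover the chain rule in the weak sense, I would mollify $g$: pick $\varepsilon>0$ and a standard mollifier and define $g_\varepsilon\in C^\infty$ on a slightly shrunk open convex subset of $\Omega_2$ containing ${\rm ran}\,\boldsymbol{f}$ (possible because ${\rm ran}\,\boldsymbol{f}$ is compactly contained in $\Omega_2$ once one restricts to a compact set, or one first extends $g$ by a Lipschitz extension to $\sR^m$). The convolution gives $\|g_\varepsilon\|_{L^\infty}\le \|g\|_{L^\infty(\Omega_2)}$ and $\|\nabla g_\varepsilon\|_{L^\infty}\le \|\nabla g\|_{L^\infty(\Omega_2)}$, together with $g_\varepsilon\to g$ almost everywhere. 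For the smooth approximant, the classical chain rule holds:
$$\partial_i (g_\varepsilon\circ\boldsymbol{f})(x)=\sum_{j=1}^m (\partial_j g_\varepsilon)(\boldsymbol{f}(x))\,\partial_i f_j(x)\quad\text{a.e. }x\in\Omega_1.$$
Passing to the limit $\varepsilon\to 0$ by dominated convergence (using the uniform bounds above and the almost-everywhere convergence of mollifiers, lifted from $\Omega_2$ to $\Omega_1$ via $\boldsymbol{f}$) yields the same identity with $g$ in place of $g_\varepsilon$ and identifies the weak partial derivatives of $g\circ\boldsymbol{f}$. Estimating each term by $m$ summands of size at most $\|g\|_{W^{1,\infty}(\Omega_2)}\,\|\boldsymbol{f}\|_{W^{1,\infty}(\Omega_1,\sR^m)}$ and combining the $d$ coordinate directions in Euclidean norm produces the claimed $\sqrt{d}\,m$ prefactor.

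The main obstacle is the rigorous justification of the weak chain rule: one has to argue that $(\partial_j g)\circ\boldsymbol{f}$ is a well-defined measurable function even though $\partial_j g$ is only specified almost everywhere on $\Omega_2$, which requires that $\boldsymbol{f}$ not pull back null sets of $\Omega_2$ to sets of positive measure in $\Omega_1$. The convexity of $\Omega_2$, the Lipschitz character of $\boldsymbol{f}$, and the fact that Lipschitz maps satisfy Lusin's $N^{-1}$ property for integrable data (via the area/coarea formula) are what make the mollification limit legitimate; this is also exactly why convexity of both domains appears in the hypothesis rather than mere openness. Once this measurability and convergence issue is resolved, the rest of the estimate is bookkeeping.
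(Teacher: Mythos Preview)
The paper does not prove this lemma; it is quoted verbatim as \cite[Corollary~B.5]{guhring2020error} and used as a black box throughout. Your outline---split into the $L^\infty$ part and the gradient part, identify $W^{1,\infty}$ on a convex domain with bounded Lipschitz functions, and apply the chain rule---is exactly the standard argument behind the cited result, so there is nothing to compare against here.

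One technical remark: your route to the weak chain rule via mollification leans on the claim that ``Lipschitz maps satisfy Lusin's $N^{-1}$ property,'' which is false in general (take $\boldsymbol{f}$ constant, or $\boldsymbol{f}(x)=(x,0)$ mapping $[0,1]$ into $[0,1]^2$). The pointwise object $(\partial_j g)\circ\boldsymbol{f}$ need not be well-defined up to null sets. The clean fix is already implicit in your first paragraph: since $g\circ\boldsymbol{f}$ is Lipschitz with $\mathrm{Lip}(g\circ\boldsymbol{f})\le \mathrm{Lip}(g)\,\mathrm{Lip}(\boldsymbol{f})$, and on convex domains the Lipschitz constant coincides with the essential supremum of the gradient, the bound on $\|\nabla(g\circ\boldsymbol{f})\|_{L^\infty}$ follows directly, with the $\sqrt{d}$ and $m$ factors arising from converting between the componentwise $W^{1,\infty}$ norm used in the paper and the Euclidean Lipschitz constants. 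No pointwise chain rule is needed for the stated inequality.
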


  Before presenting the proof, it is essential to introduce three propositions related to ReLU-based DNNs. In particular, for Proposition \ref{point}, it leverages the bit-extraction technique introduced in \cite{bartlett1998almost, bartlett2019nearly} to represent piecewise linear functions on a fixed regular grid with $M$ cells, requiring only $\fO(\sqrt{M})$. (Proposition \ref{point}). \begin{proposition}[{\cite[Proposition 4.3]{lu2021deep}}]\label{step}
			Given any $N,L\in\sN_+$ and $\delta\in\Big(0,\frac{1}{3K}\Big]$ for $K\le N^2L^2$, there exists a $\sigma$-NN $\phi$ with the width $4N+5$ and depth $4L+4$ such that 
			
			\[\phi(x)=k,x\in\left[\frac{k}{K},\frac{k+1}{K}-\delta\cdot 1_{k< K-1}\right], ~k=0,1,\ldots,K-1.\]
		\end{proposition}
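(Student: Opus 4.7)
The plan is to construct $\phi$ via the bit-extraction idea of \cite{bartlett1998almost,bartlett2019nearly}. The key observation is that the budget $K \leq N^2 L^2$ is exactly what lets us split $\phi$ into a \emph{coarse} piece realized by parallel circuitry (which consumes width) and a \emph{fine} piece realized by iterated composition (which consumes depth), so that the total resources stay within width $4N+5$ and depth $4L+4$.

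First I would reduce, by padding, to $K = N^2 L^2$ and factor each label $k \in \{0,\ldots,K-1\}$ as $k = j \cdot L^2 + m$ with $j < N^2$ and $m < L^2$. The target network decomposes as $\phi(x) = L^2 \phi_{\mathrm{coarse}}(x) + \phi_{\mathrm{fine}}(x)$, designed so that $\phi_{\mathrm{coarse}}(x) = j$ and $\phi_{\mathrm{fine}}(x) = m$ on every plateau $[k/K,\,(k+1)/K-\delta]$. Placing the two subnetworks in parallel and summing in the output layer adds their widths and takes the maximum of their depths, which is what justifies the additive structure of the final budget.

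For the coarse part, a single ReLU layer of width $N$ yields a piecewise linear function with $\fO(N)$ breakpoints, and two stacked such layers realize the $N^2$-level staircase $\phi_{\mathrm{coarse}}$ using $\fO(N)$ width and $\fO(1)$ depth, with transition regions of length at most $\delta$ between consecutive levels. For the fine part, I would iterate a sawtooth-type map $T\colon[0,1]\to[0,1]$ that wraps each coarse bucket $[j/N^2,(j+1)/N^2]$ back onto $[0,1]$; each iterate multiplies the tooth count, and combining $\fO(L)$ iterates with a running accumulator of width $\fO(N)$ extracts one ``digit'' of $m$ per layer, producing the required $L^2$ plateaus per coarse bucket. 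The sharp transitions are obtained from ReLU differences of the form $\sigma(\alpha(x-a)) - \sigma(\alpha(x-b))$ with $\alpha$ chosen so that each transition has width exactly $\delta$.

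The main obstacle will be the alignment of the $\delta$-wide transition regions through the iterated compositions: one has to verify that the sawtooth wrap does not amplify errors across layers, so that every plateau of $\phi_{\mathrm{fine}}$ sits exactly over the prescribed interval $[k/K,\,(k+1)/K-\delta]$ and the final output is an exact integer on that plateau. The hypothesis $\delta \le 1/(3K)$ supplies just enough slack for the compositions not to collide. A secondary, more mechanical task is the tight parameter count, i.e.\ verifying that after accounting for the parallel coarse and fine streams, the output summation, and the auxiliary gates needed to propagate $x$ through the layers, the total width and depth fit within exactly $4N+5$ and $4L+4$ respectively.
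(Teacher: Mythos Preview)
The paper does not give its own proof of this proposition: it is quoted as \cite[Proposition~4.3]{lu2021deep} and used as a black box, with no argument supplied here. So there is nothing in the present paper to compare your attempt against.

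For what it is worth, your outline is in the right spirit---factoring $K=N^2L^2$ and handling one factor with width and the other with depth is precisely the mechanism that produces the $4N+5$ and $4L+4$ budgets in the cited reference. Two places in your sketch are loose. First, ``two stacked width-$N$ ReLU layers realise the $N^2$-level staircase'' is the correct piece count under composition, but you still need an explicit one-layer $N$-step staircase whose $\delta$-transitions survive the composition; the additive constants in the width and depth come from the bookkeeping here. Second, your fine-part description conflates two different amplification devices: a sawtooth composition multiplies the tooth count per layer, while the digit-accumulator you mention adds one step per layer; neither, taken literally, gives exactly $L^2$ plateaus in $\fO(L)$ depth without again composing two depth-$L$ step functions. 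Your identification of the main obstacle---tracking the $\delta$-wide transitions through the compositions, with $\delta\le 1/(3K)$ supplying the slack---is correct and is indeed the only delicate point.
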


		\begin{proposition}[{\cite[Proposition 4.4]{lu2021deep}}]\label{point}
			Given any $N,L,s\in\sN_+$ and $\xi_i\in[0,1]$ for $i=0,1,\ldots N^2L^2-1$, there exists a $\sigma$-NN $\phi$ with the width $16s(N+1)\log_2(8N)$ and depth $(5L+2)\log_2(4L)$ such that
			
			1. $|\phi(i)-\xi_i|\le N^{-2s}L^{-2s}$ for $i=0,1,\ldots N^{2}L^{2}-1$.
			
			2. $0\le \phi(x)\le 1$, $x\in\sR$.
		\end{proposition}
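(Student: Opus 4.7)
The plan is to combine a base-$(N^2L^2)$ expansion of each target value with the bit-extraction technique of \cite{bartlett1998almost,bartlett2019nearly}. Set $K := N^2L^2$. Replace each $\xi_i \in [0,1]$ by its truncated base-$K$ expansion
\[
\tilde\xi_i = \sum_{k=1}^{s} b_{i,k}\,K^{-k}, \qquad b_{i,k}\in\{0,1,\ldots,K-1\},
\]
so that $|\xi_i-\tilde\xi_i|\le K^{-s}=N^{-2s}L^{-2s}$. It then suffices to build a $\sigma$-NN realising $\phi(i)=\tilde\xi_i$ at each integer $i\in\{0,\ldots,K-1\}$, and to clip the output by one layer $\max\{0,\min\{1,\cdot\}\}=\sigma(\cdot)-\sigma(\cdot-1)$ to enforce property~2.

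I would construct $\phi$ as a weighted sum of $s$ parallel ``digit extractors'' $\psi_k:i\mapsto b_{i,k}$:
\[
\phi(x)=\sum_{k=1}^{s} K^{-k}\,\psi_k(x).
\]
This decomposition turns the $s$ into a width multiplier while keeping the depth unchanged, exactly matching the structural bound $16s(N+1)\log_2(8N)$ in width versus $(5L+2)\log_2(4L)$ in depth.

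Each digit extractor $\psi_k$ has domain of size $K=N^2L^2$ and image in $\{0,\ldots,K-1\}$. I would build it in two stages. First, invoke Proposition~\ref{step} (with $K=N^2L^2$) to produce a step network of width $O(N)$ and depth $O(L)$ that maps a neighbourhood of each integer $i$ to the value $i$ exactly. Second, apply the bit-extraction subroutine: write each digit in binary $b_{i,k}=\sum_{j=0}^{2\lceil\log_2(NL)\rceil-1}c_{i,k,j}2^j$, pack the $O(N^2L^2\log(NL))$ resulting bits into a small number of real-valued weights, and recover the specific bits needed for the input index $i$ through an iterated doubling/thresholding loop as in \cite{bartlett1998almost,bartlett2019nearly}. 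Standard bookkeeping shows this loop can be implemented with width $O(N\log N)$ and depth $O(L\log L)$ --- the $\log_2(8N)$ factor accommodates the bit-packing in the weights, and the $\log_2(4L)$ factor accommodates the iterative shift-and-extract recursion.

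The main obstacle, as in \cite{lu2021deep}, is keeping the depth at $(5L+2)\log_2(4L)$ rather than letting it blow up to $L\log(NL)$ or $sL\log L$: the $s$ parallel digit extractors must share the same depth, and inside each extractor the bit-extraction iteration must reuse the same depth trunk by storing intermediate bits in parallel channels rather than stacking them vertically. Once this is handled, combining the clipped linear combination with the step network yields the stated width and depth, the approximation accuracy $|\phi(i)-\xi_i|\le K^{-s}=N^{-2s}L^{-2s}$, and the range property $0\le\phi(x)\le 1$.
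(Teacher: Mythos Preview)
The paper does not supply its own proof of this proposition: it is quoted verbatim as \cite[Proposition~4.4]{lu2021deep} and used as a black box. Your sketch follows the same strategy as that reference --- truncated base-$K$ expansion to achieve the $K^{-s}$ accuracy, $s$ parallel digit extractors contributing the width factor, bit extraction in the style of \cite{bartlett1998almost,bartlett2019nearly} to realise each extractor within width $O(N\log N)$ and depth $O(L\log L)$, and a final clipping layer for the range constraint --- so there is nothing to contrast.
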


  \begin{proposition}
   [{\cite[Proposition 1]{siegel2022optimal}}]\label{sum}
			Given a sequence of the neural network $\{p_i\}_{i=1}^{M}$, and each $p_i$ is a $\sigma$-NN from $\sR^d\to\sR$ with the width $N$ and depth $L_i$, then $\sum_{i=1}^Mp_i$ is a $\sigma$-NN with the width $N+2d+2$ and  depth $\sum_{i=1}^ML_i$.
		\end{proposition}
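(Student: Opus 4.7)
\textbf{Proof proposal for Proposition \ref{sum}.} The plan is to build $\sum_{i=1}^M p_i$ by stacking the subnetworks $p_1,\ldots,p_M$ sequentially rather than in parallel, while carrying along two auxiliary channels: one that preserves the input $\boldsymbol{x}$ to be fed into each successive $p_i$, and one that accumulates the running partial sum $S_k:=\sum_{i=1}^k p_i(\boldsymbol{x})$. Using the ReLU identity $t=\sigma(t)-\sigma(-t)$, a real scalar can be transported through one hidden layer at the cost of $2$ neurons, and the vector $\boldsymbol{x}\in\sR^d$ can be transported at the cost of $2d$ neurons. This is the standard mechanism that keeps widths bounded when one stacks subnetworks in depth.

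The construction proceeds inductively on $k=1,2,\ldots,M$. Assume we have produced a $\sigma$-NN $\Phi_k$ of width $N+2d+2$ and depth $L_1+\cdots+L_k$ whose output layer contains (i) the two pairs $(\sigma(x_j),\sigma(-x_j))$ for $j=1,\ldots,d$, ready to be collapsed to $\boldsymbol{x}$ by the next affine map, and (ii) the pair $(\sigma(S_k),\sigma(-S_k))$, ready to be collapsed to $S_k$. In the next $L_{k+1}$ hidden layers we embed the network $p_{k+1}$ acting on the first channel: at each of those layers we allocate $N$ neurons for the internal computation of $p_{k+1}$, $2d$ neurons to keep propagating $\boldsymbol{x}$ via the $\sigma(\pm x_j)$ trick, and $2$ neurons to keep propagating the scalar $S_k$. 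At the final hidden layer of the $p_{k+1}$ block, an affine combination produces $S_{k+1}=S_k+p_{k+1}(\boldsymbol{x})$ which is then encoded as $(\sigma(S_{k+1}),\sigma(-S_{k+1}))$, and the input channel is reset to $(\sigma(\pm x_j))_{j=1}^d$. This yields $\Phi_{k+1}$ with the same width $N+2d+2$ and depth increased by exactly $L_{k+1}$.

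After the $M$-th block a final affine readout $\sigma(S_M)-\sigma(-S_M)$ recovers $\sum_{i=1}^M p_i(\boldsymbol{x})$, so the assembled network has width $N+2d+2$ and depth $\sum_{i=1}^M L_i$, as claimed.

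The only subtle point is bookkeeping rather than any genuine mathematical obstacle: one must check that the first hidden layer of each $p_i$-block can simultaneously (a) read $\boldsymbol{x}$ off the input channel, (b) preserve the $(\sigma(\pm x_j))$ representation for future blocks, and (c) continue propagating the scalar $S_{i-1}$, all through a single affine map followed by ReLU. This is straightforward because the affine map may copy any linear combination of the previous layer's activations into the $N+2d+2$ neurons of the next layer independently; the widths $N$, $2d$, $2$ are disjoint allocations and do not interfere. A direct verification at layer boundaries completes the proof.
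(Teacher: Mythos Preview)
Your construction is correct and is exactly the standard argument: stack the $p_i$ in depth while reserving $2d$ neurons to pass $\boldsymbol{x}$ and $2$ neurons to pass the running sum via the identity $t=\sigma(t)-\sigma(-t)$. The paper does not supply its own proof of this proposition; it is quoted from \cite{siegel2022optimal}, and the proof there follows the same scheme you describe.
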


  \section{Approximation in Korobov Spaces with Rates in Continuous Function Approximators}\label{continuouss}
In this section, we aim to establish the approximation of DNNs with an optimal rate in continuous function approximation theory. Our approximation error is dependent not only on the width $N$ but also on the depth $L$ of the DNNs. The result, measured by $H^1$ norms, is presented as follows, and the result measured by $L_p$ norm is provided in Corollary \ref{L2}.

  \begin{theorem}\label{H1}
      For any $N, L\in\sN_+$ and $f(\boldsymbol{x})\in X^{2,\infty}(\Omega)$, there exists a $\sigma$-NN $\phi(\boldsymbol{x})$ with the width $C_1N(\log_2 N)^d$ and a depth of $C_2 L(\log_2 L)^d$ such that 
			\begin{align}
      \|f(\boldsymbol{x})-\phi(\boldsymbol{x})\|_{H^1(\Omega)}\le \frac{C_3}{NL}
  \end{align} with $\phi(\boldsymbol{x})|_{\partial \Omega}=0$, where $C_1,C_2$ and $C_3$ are independent with $N$ and $L$, and polynomially dependent on the dimension $d$.$\footnote{In fact, $C_1,C_2$ and $C_3$ can be expressed by $d$ with an explicit formula as we note in the proof of this theorem.  However, the formulas may be very complicated.}$
  \end{theorem}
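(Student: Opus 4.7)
The plan is to approximate $f$ by the hierarchical sparse-grid truncation $f_n^{(1)}$ from Lemma \ref{err-first}, and then to realize $f_n^{(1)}$ by a $\sigma$-NN of the prescribed size. Since that lemma gives $\|f-f_n^{(1)}\|_{H^1}=\mathcal{O}(M^{-1}(\log_2 M)^{d-1})$ with $M=\mathcal{O}(2^n n^{d-1})$, setting $n\sim \log_2(NL)$ produces a truncation error $\mathcal{O}((NL)^{-1})$ and a sparse grid of size $M\sim NL(\log_2(NL))^{d-1}$. It then remains to emulate $f_n^{(1)}$ in the $H^1$ norm with a network whose width and depth are of the claimed order.

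The truncated sum
\[
f_n^{(1)}(\boldsymbol{x})=\sum_{|\boldsymbol{l}|_1\le n+d-1}\sum_{\boldsymbol{i}\in\boldsymbol{i}_{\boldsymbol{l}}} v_{\boldsymbol{l},\boldsymbol{i}}\prod_{j=1}^{d}\phi_{l_j,i_j}(x_j)
\]
is assembled summand by summand. Each univariate hat function $\phi_{l_j,i_j}$ is piecewise linear with three breakpoints and vanishes at $x_j\in\{0,1\}$, so it is represented exactly by a constant-width, depth-one $\sigma$-subnetwork. For each index $(\boldsymbol{l},\boldsymbol{i})$ the $d$ hat outputs are fed into the product approximator of Proposition \ref{prop1} with parameters $N'\sim N(\log_2 N)^{d-1}$ and $L'\sim L(\log_2 L)^{d-1}$, yielding a $W^{1,\infty}$ product error bounded by $(N'+1)^{-7dL'}$, which is polynomially much smaller than $(NL)^{-1}/M$. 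The boundary condition $\phi|_{\partial\Omega}=0$ comes for free: whenever any $x_j\in\{0,1\}$ the corresponding hat output equals $0$, and property (\ref{equal0}) of Proposition \ref{prop1} then forces both the product and all its partial derivatives in the remaining directions to vanish, so the whole network and its tangential gradient are zero on $\partial\Omega$.

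I then assemble the $M$ approximate basis functions into a single network via Proposition \ref{sum}, packing summands across both width and depth by grouping the $\binom{q-1}{d-1}2^{q-d}$ pieces at each level $q=|\boldsymbol{l}|_1$: a parallel-in-width/series-in-depth layout with blocks of width $\mathcal{O}(N(\log_2 N)^{d-1})$ and depth $\mathcal{O}(L(\log_2 L)^{d-1})$ absorbs all $M$ terms into the claimed total width $\mathcal{O}(N(\log_2 N)^{d})$ and depth $\mathcal{O}(L(\log_2 L)^{d})$. Multiplying the per-term $W^{1,\infty}$ error by the coefficient bound $|v_{\boldsymbol{l},\boldsymbol{i}}|\le 2^{-d}2^{-|\boldsymbol{l}|_1}|f|_{2,\infty}$ from Lemma \ref{boundconstant}, using Lemma \ref{composition} to translate the composed product error into $H^1$ on the support of $\phi_{\boldsymbol{l},\boldsymbol{i}}$, and summing geometrically in $|\boldsymbol{l}|_1$ delivers a total network error well below $(NL)^{-1}$.

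The main obstacle is the bookkeeping in the last step: one must fit $M\sim NL(\log_2 NL)^{d-1}$ tensor-product subnetworks into a network that has only $\mathcal{O}(NL(\log_2 N)^{d}(\log_2 L)^{d})$ room, which demands the level-by-level block decomposition described above; simultaneously, Lemma \ref{composition} inflates the $H^1$ error by the hat Lipschitz constant $2^{l_j}$, which grows with the refinement level and must be damped by the coefficient decay $2^{-|\boldsymbol{l}|_1}$ together with the polynomially small product tolerance enabled by the choice of $(N',L')$. Verifying that these three ingredients—combinatorial level counts, coefficient decay, and product tolerance—balance so that the total $H^1$ error stays $\mathcal{O}((NL)^{-1})$ is the technical core of the argument.
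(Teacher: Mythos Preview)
Your overall strategy coincides with the paper's: truncate to $f_n^{(1)}$ with $n\sim\log_2(NL)$, replace each $\phi_{\boldsymbol{l},\boldsymbol{i}}$ by feeding the $d$ hat outputs into the product network of Proposition~\ref{prop1}, control the $H^1$ error via Lemma~\ref{composition} and the coefficient decay of Lemma~\ref{boundconstant}, and read off the boundary condition from~\eqref{equal0}. The paper also exploits one structural point you do not state explicitly: for fixed $\boldsymbol{l}$ the supports of $\phi_{\boldsymbol{l},\boldsymbol{i}}$ over $\boldsymbol{i}\in\boldsymbol{i}_{\boldsymbol{l}}$ are pairwise disjoint, so the inner sum in $H^1$ is essentially a pointwise maximum rather than a sum of $2^{|\boldsymbol{l}|_1-d}$ terms. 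Your accounting survives without this because you take the product tolerance small enough to swallow the extra factor, but it is worth mentioning.

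There is, however, a genuine inconsistency in your size budget. You invoke Proposition~\ref{prop1} with parameters $N'\sim N(\log_2 N)^{d-1}$ and $L'\sim L(\log_2 L)^{d-1}$, so each basis subnetwork already has width $\sim N(\log_2 N)^{d-1}$ and depth $\sim L(\log_2 L)^{d-1}$. You then claim to pack all $M\sim NL(\log_2(NL))^{d-1}$ of these into total width $\mathcal{O}(N(\log_2 N)^d)$ and depth $\mathcal{O}(L(\log_2 L)^d)$; but that layout leaves room for only $\mathcal{O}(\log_2 N\cdot\log_2 L)$ blocks, not $M$. In ``area'' terms, the subnetworks occupy $\sim N^2L^2(\log_2(NL))^{3(d-1)}$ while the target network has only $\sim NL(\log_2(NL))^{2d}$, which is too small by a factor of $NL$. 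The paper resolves this by taking the product network tiny---constant width and depth $\mathcal{O}(\log_2(NL))$ (concretely $N=1$, $L=\lfloor\log_2\widetilde N\rfloor+\lfloor\log_2\widetilde L\rfloor$ in its internal notation)---so each basis subnetwork costs width $\mathcal{O}(d)$ and depth $\mathcal{O}(d^2\log_2(NL))$, and then all $M$ terms do fit after the Proposition~\ref{sum} redistribution. If you shrink $(N',L')$ to constant width and logarithmic depth, the product error $(N'+1)^{-7dL'}$ is still at most $(NL)^{-7d}$, which is more than enough; your large choice of $(N',L')$ buys nothing and breaks the size bound.
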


  Before the proof, we need to approximate the grid functions in the first. \begin{proposition}\label{grid}
      For any $N, L\in\sN_+$ with $|\boldsymbol{l}|_1\le n+d-1,\boldsymbol{1} \leq \boldsymbol{i} \leq 2^{\boldsymbol{l}}-\boldsymbol{1}$, there exists a $\sigma$-NN $\hat{\phi}_{\boldsymbol{l},\boldsymbol{i}}(\boldsymbol{x})$ with the width $9(N+1)+4d-1$ and depth $14 d(d-1) L+1$ such that 
			 \[\|\hat{\phi}_{\boldsymbol{l},\boldsymbol{i}}(\boldsymbol{x})-\phi_{\boldsymbol{l},\boldsymbol{i}}(\boldsymbol{x})\|_{W^{1,\infty}(\Omega)}\le 10d^{\frac{5}{2}}(N+1)^{-7 d L} \cdot 2^{|\boldsymbol{l}|_1},\]with $\text{supp}~\hat{\phi}_{\boldsymbol{l},\boldsymbol{i}}(\boldsymbol{x})\subset\text{supp}~\phi_{\boldsymbol{l},\boldsymbol{i}}(\boldsymbol{x})$.
  \end{proposition}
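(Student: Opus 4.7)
The plan is to approximate $\phi_{\boldsymbol{l},\boldsymbol{i}}(\boldsymbol{x})=\prod_{j=1}^d \phi_{l_j,i_j}(x_j)$ by first realizing each univariate hat factor exactly with a small ReLU block, then applying the approximate product network of Proposition \ref{prop1} on top. The identity
\[
\phi_{l_j,i_j}(x_j)=\sigma(2^{l_j}x_j-i_j+1)-2\sigma(2^{l_j}x_j-i_j)+\sigma(2^{l_j}x_j-i_j-1)
\]
represents $\phi_{l_j,i_j}$ exactly using three ReLU units, so all $d$ factors are produced in parallel by a single hidden layer of width $3d$ followed by a linear read-off into the vector $\boldsymbol{\phi}(\boldsymbol{x}):=(\phi_{l_1,i_1}(x_1),\ldots,\phi_{l_d,i_d}(x_d))$, whose components take values in $[0,1]$.

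I then feed $\boldsymbol{\phi}(\boldsymbol{x})$ into the product network $\widetilde{P}$ supplied by Proposition \ref{prop1} with $s=d$, which approximates $P(\boldsymbol{y})=y_1 y_2\cdots y_d$ to accuracy $10(d-1)(N+1)^{-7dL}$ in $W^{1,\infty}((0,1)^d)$, has width $9(N+1)+d-1$ and depth $14d(d-1)L$, and additionally satisfies the vanishing property Eq.~(\ref{equal0}) whenever some coordinate of $\boldsymbol{y}$ equals $0$. Setting $\hat{\phi}_{\boldsymbol{l},\boldsymbol{i}}:=\widetilde{P}\circ\boldsymbol{\phi}$ therefore gives a $\sigma$-NN of depth $14d(d-1)L+1$ and width at most $3d+9(N+1)+d-1=9(N+1)+4d-1$, matching the claim. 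The support inclusion is immediate: outside $\text{supp}\,\phi_{\boldsymbol{l},\boldsymbol{i}}$ at least one $\phi_{l_j,i_j}(x_j)$ vanishes, and then (\ref{equal0}) forces $\hat{\phi}_{\boldsymbol{l},\boldsymbol{i}}(\boldsymbol{x})=0$.

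For the Sobolev error I apply Lemma \ref{composition} to $g=\widetilde{P}-P$ and $\boldsymbol{f}=\boldsymbol{\phi}$, obtaining
\[
\|\hat{\phi}_{\boldsymbol{l},\boldsymbol{i}}-\phi_{\boldsymbol{l},\boldsymbol{i}}\|_{W^{1,\infty}(\Omega)}\le\sqrt{d}\,d\,\|\widetilde{P}-P\|_{W^{1,\infty}((0,1)^d)}\,\|\boldsymbol{\phi}\|_{W^{1,\infty}(\Omega,\sR^d)}.
\]
The only quantitative input specific to this proof is $\|\boldsymbol{\phi}\|_{W^{1,\infty}(\Omega,\sR^d)}=\max_j\|\phi_{l_j,i_j}\|_{W^{1,\infty}(\Omega)}=\max_j 2^{l_j}\le 2^{|\boldsymbol{l}|_1}$, coming from the Lipschitz constant $2^{l_j}=1/h_{l_j}$ of the hat function at level $l_j$. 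Plugging in and using $\sqrt{d}\,d\,(d-1)\le d^{5/2}$ yields the claimed bound $10d^{5/2}(N+1)^{-7dL}2^{|\boldsymbol{l}|_1}$.

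The main technical subtlety is the multiplicative blow-up $2^{|\boldsymbol{l}|_1}$ generated by the chain rule against the steep hat factors. At the level of a single basis function it is absorbed by the strong decay of $\|\widetilde{P}-P\|_{W^{1,\infty}}$ in $(N+1)^{-7dL}$, but it is this same factor that will have to be tracked carefully through the summation over all $(\boldsymbol{l},\boldsymbol{i})$ with $|\boldsymbol{l}|_1\le n+d-1$ in the downstream proof of Theorem \ref{H1}. Apart from recording this dependence, everything in the present proposition reduces to bookkeeping of widths and depths together with a direct application of Lemma \ref{composition}.
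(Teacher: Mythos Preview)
Your proof is correct and follows essentially the same approach as the paper: realize each univariate hat $\phi_{l_j,i_j}$ exactly via three ReLU units, compose with the product network $\phi_{\text{prod}}$ of Proposition~\ref{prop1}, invoke Lemma~\ref{composition} to pick up the factor $\|\boldsymbol{\phi}\|_{W^{1,\infty}}\le 2^{|\boldsymbol{l}|_1}$, and read off the support inclusion from the vanishing property~\eqref{equal0}. Your bookkeeping of widths, depths, and the constant $\sqrt{d}\cdot d\cdot(d-1)\le d^{5/2}$ matches the paper's exactly.
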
 \begin{proof}
     For each hat function $\phi_{l_j, i_j}(x_j)$, it can be expressed as:
\[\phi_{l_j, i_j}(x_j) = \sigma\left(\frac{x_j - i_j \cdot h_{l_j}}{h_{l_j}} - 1\right) - 2\sigma\left(\frac{x_j - i_j \cdot h_{l_j}}{h_{l_j}}\right) + \sigma\left(\frac{x_j - i_j \cdot h_{l_j}}{h_{l_j}} + 1\right).\]
According to Proposition \ref{prop1}, there exists a $\sigma$-NN $\phi_\text{prod}$ with a width of $9(N+1) + d - 1$ and depth of $14 d(d-1) L$ such that $\|\phi_\text{prod}\|_{\mathcal{W}^{1, \infty}([0,1]^d)} \leq 18$ and
\[\left\|\phi_\text{prod}(\boldsymbol{x}) - y_1 y_2 \cdots y_d\right\|_{\mathcal{W}^{1, \infty}([0,1]^d)} \leq 10(d-1)(N+1)^{-7 d L}.\]
Hence, we define
\[\hat{\phi}_{\boldsymbol{l},\boldsymbol{i}}(\boldsymbol{x}) = \phi_\text{prod}(\phi_{l_1, i_1}(x_1),\cdots,\phi_{l_d, i_d}(x_d)),\]
where $\hat{\phi}_{\boldsymbol{l},\boldsymbol{i}}(\boldsymbol{x})$ is a $\sigma$-NN with a width of $9(N+1) + 4d - 1$ and depth of $14 d(d-1) L + 1$. Furthermore, considering Proposition \ref{prop1} and Lemma \ref{composition}, we have:
\[\|\hat{\phi}_{\boldsymbol{l},\boldsymbol{i}}(\boldsymbol{x}) - \phi_{\boldsymbol{l},\boldsymbol{i}}(\boldsymbol{x})\|_{\mathcal{W}^{1,\infty}(\Omega)} = \|(\phi_\text{prod} - y_1 y_2 \cdots y_d)\circ(\phi_{l_1, i_1}(x_1),\cdots,\phi_{l_d, i_d}(x_d))\|_{\mathcal{W}^{1,\infty}(\Omega)}.\]
This leads to:
\[\|\hat{\phi}_{\boldsymbol{l},\boldsymbol{i}}(\boldsymbol{x}) - \phi_{\boldsymbol{l},\boldsymbol{i}}(\boldsymbol{x})\|_{\mathcal{W}^{1,\infty}(\Omega)}\le 10d^{\frac{5}{2}}(N+1)^{-7 d L} \cdot 2^{|\boldsymbol{l}|_1}.\]
Furthermore, if \(\phi_{\boldsymbol{l},\boldsymbol{i}}(\boldsymbol{x})=0\), there exists \(\phi_{l_j, i_j}(x_j) = 0\). As per Proposition \ref{prop1}, we conclude \(\hat{\phi}_{\boldsymbol{l},\boldsymbol{i}}(\boldsymbol{x})= 0\).
  \end{proof}

  \begin{proof}[{Proof of Theorem \ref{H1}}]
      
   Denote \[\phi(\boldsymbol{x})=\sum_{|\boldsymbol{l}|_1 \leq n+d-1} \sum_{\boldsymbol{i} \in \boldsymbol{i}_{\boldsymbol{l}}} v_{\boldsymbol{l}, \boldsymbol{i}} \hat{\phi}_{\boldsymbol{l}, \boldsymbol{i}}(\boldsymbol{x})\] which can be interpreted as a $\sigma$-NN with a width of $\fO(2^nn^{d-1}N)$ and depth of $\fO(L)$, with the error given by\begin{align}
      \|f-\phi\|_{H^1(\Omega)}\le C \left[M^{-1}\left|\log _2 M\right|^{(d-1)}+\sum_{|\boldsymbol{l}|_1 \leq n+d-1}\left\| \sum_{\boldsymbol{i} \in \boldsymbol{i}_{\boldsymbol{l}}}(v_{\boldsymbol{l},\boldsymbol{i}})(\hat{\phi}_{\boldsymbol{l},\boldsymbol{i}}(\boldsymbol{x})-\phi_{\boldsymbol{l},\boldsymbol{i}}(\boldsymbol{x}))\right\|_{H^1(\Omega)}\right],
  \end{align}where the constant $C$ is polynomially dependent on the dimension $d$.$\footnote{In this paper, we consistently employ the symbol $C$ as a constant independent of $M$, $N$, and $L$, which may vary from line to line.}$
  
  Due to $\text{supp}~\hat{\phi}_{\boldsymbol{l},\boldsymbol{i}}(\boldsymbol{x}) \subset \text{supp}~\phi_{\boldsymbol{l},\boldsymbol{i}}(\boldsymbol{x})$, Proposition \ref{grid}, and the fact that a given $\boldsymbol{x}\in\Omega$ belongs to the support of at most one $\phi_{\boldsymbol{l},\boldsymbol{i}}(\boldsymbol{x})$ because they have disjoint supports, we have \begin{align}
      \left\| \sum_{\boldsymbol{i} \in \boldsymbol{i}_{\boldsymbol{l}}}(v_{\boldsymbol{l},\boldsymbol{i}})(\hat{\phi}_{\boldsymbol{l},\boldsymbol{i}}(\boldsymbol{x})-\phi_{\boldsymbol{l},\boldsymbol{i}}(\boldsymbol{x}))\right\|_{H^1(\Omega)}\le 2^{-d}2^{-|\boldsymbol{l}|_1}|f|_{2,\infty}10d^{\frac{5}{2}}(N+1)^{-7 d L}.
  \end{align} Since $2^{-d}\sum_{|\boldsymbol{l}|_1 \leq n+d-1}2^{-|\boldsymbol{l}|_1}<\frac{1}{2\ln 2}\le 1$, we have that \begin{align}
      \sum_{|\boldsymbol{l}|_1 \leq n+d-1}\left\| \sum_{\boldsymbol{i} \in \boldsymbol{i}_{\boldsymbol{l}}}(v_{\boldsymbol{l},\boldsymbol{i}})(\hat{\phi}_{\boldsymbol{l},\boldsymbol{i}}(\boldsymbol{x})-\phi_{\boldsymbol{l},\boldsymbol{i}}(\boldsymbol{x}))\right\|_{H^1(\Omega)}\le C (N+1)^{-7 d L}
  \end{align}where the constant $C$ is polynomially dependent on the dimension $d$. Setting $n=\lfloor\log_2\widetilde{N}\rfloor+\lfloor\log_2\widetilde{L}\rfloor$, the neural network $\phi(\boldsymbol{x})$ can be viewed as a $\sigma$-NN with a depth of $\fO(L)$ a width of $\fO(\widetilde{N}\widetilde{L}(\log_2 \widetilde{N}\log_2 \widetilde{L})^{d-1}N)$ and a width of $\fO(\widetilde{N}\widetilde{L}(\log_2 \widetilde{N}\log_2 \widetilde{L})^{d-1}N)$. It can also be regarded as the sum of a number of $\fO(\widetilde{L}(\log_2 \widetilde{L})^{d-1}/\log_2 \widetilde{N})$ neural networks, each with a width of $\fO(\widetilde{N}(\log_2 \widetilde{N})^dN)$ and a depth of $\fO(L)$. Due to Proposition \ref{sum}, we know that $\phi(\boldsymbol{x})$ is a $\sigma$-NN with a width of $\fO(\widetilde{N}(\log_2 \widetilde{N})^dN)$ and a depth of $\fO(L\widetilde{L}(\log_2 \widetilde{L})^{d-1}/\log_2 \widetilde{N})$.

Setting $N=1$ and $L=\lfloor\log_2\widetilde{N}\rfloor+\lfloor\log_2\widetilde{L}\rfloor$, we have that $\phi(\boldsymbol{x})$ is a $\sigma$-NN with a width of $\fO(\widetilde{N}(\log_2 \widetilde{N})^d)$ and a depth of $\fO(\widetilde{L}(\log_2 \widetilde{L})^d)$.  Furthermore,\begin{align}
      \sum_{|\boldsymbol{l}|_1 \leq n+d-1}\left\| \sum_{\boldsymbol{i} \in \boldsymbol{i}_{\boldsymbol{l}}}(v_{\boldsymbol{l},\boldsymbol{i}})(\hat{\phi}_{\boldsymbol{l},\boldsymbol{i}}(\boldsymbol{x})-\phi_{\boldsymbol{l},\boldsymbol{i}}(\boldsymbol{x}))\right\|_{H^1(\Omega)}\le C (N+1)^{-7 d L}\le C2^{-7d\log_2(\widetilde{N}\widetilde{L})}\le  \frac{C}{\widetilde{N}\widetilde{L}}
  \end{align} 

  Finally, due to \[M\le C{\widetilde{N}\widetilde{L}}(\log_2{\widetilde{N}\widetilde{L}})^{d-1}\]  we obtain that \begin{align}
      \|f-\phi\|_{H^1(\Omega)}\le C \left[M^{-1}\left|\log _2 M\right|^{(d-1)}+\frac{1}{\widetilde{N}\widetilde{L}}\right]\le \frac{C}{\widetilde{N}\widetilde{L}},
  \end{align}where the constant $C$ is polynomially dependent on the dimension $d$. The boundary condition can be directly obtained from $\text{supp}~\hat{\phi}_{\boldsymbol{l},\boldsymbol{i}}(\boldsymbol{x}) \subset \text{supp}~\phi_{\boldsymbol{l},\boldsymbol{i}}(\boldsymbol{x})$.
  \end{proof}

  Following the same idea in the proof, we derive the following Corollary, which describes the approximation of Korobov spaces by deep neural networks measured by $L^2$ norms:
  \begin{corollary}\label{L2}
      For any $N, L\in\sN_+$ and $f(\boldsymbol{x})\in X^{2,\infty}(\Omega)$, there exists a $\sigma$-NN $\phi(\boldsymbol{x})$ with the width $C_1N(\log_2 N)^{3d}$ and a depth of $C_2 L(\log_2 L)^{3d}$ such that 
			\begin{align}
      \|f(\boldsymbol{x})-\phi(\boldsymbol{x})\|_{L_p(\Omega)}\le \frac{C_3}{N^2L^2}
  \end{align} with $1\le p\le \infty$ and $\phi(\boldsymbol{x})|_{\partial \Omega}=0$, where $C_1,C_2$ and $C_3$ are independent with $N$ and $L$, and polynomially dependent on the dimension $d$.
  \end{corollary}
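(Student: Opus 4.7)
The plan is to run the proof of Theorem \ref{H1} almost verbatim, with two substitutions that exploit the weaker $L_p$ norm. I would again set
\[
\phi(\boldsymbol{x}) = \sum_{|\boldsymbol{l}|_1 \le n+d-1}\sum_{\boldsymbol{i}\in\boldsymbol{i}_{\boldsymbol{l}}} v_{\boldsymbol{l},\boldsymbol{i}}\,\hat{\phi}_{\boldsymbol{l},\boldsymbol{i}}(\boldsymbol{x}),
\]
where $\hat{\phi}_{\boldsymbol{l},\boldsymbol{i}}$ is the $\sigma$-NN produced by Proposition \ref{grid}, and split the error into the truncation part $\|f-f_n^{(1)}\|_{L_\infty}$ and the basis-replacement part $\|f_n^{(1)}-\phi\|_{L_\infty}$. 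Because $\Omega$ has unit volume, controlling $L_\infty$ automatically controls $L_p$ for every $p\in[1,\infty]$, so it suffices to work with the sup norm.

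For the truncation part I would invoke the $L_\infty$ half of Lemma \ref{err-first}, which gives $\|f-f_n^{(1)}\|_{L_\infty}=\mathcal{O}(M^{-2}|\log_2 M|^{3(d-1)})$ with $M=\mathcal{O}(2^n n^{d-1})$. For the basis-replacement part I would use Proposition \ref{grid} in its $L_\infty$ content only: since the inputs $\phi_{l_j,i_j}(x_j)$ to $\phi_{\mathrm{prod}}$ already lie in $[0,1]$, the composition bound does not pick up the chain-rule factor $2^{|\boldsymbol{l}|_1}$ that appeared in the $H^1$ case and simply reads $\|\hat{\phi}_{\boldsymbol{l},\boldsymbol{i}}-\phi_{\boldsymbol{l},\boldsymbol{i}}\|_{L_\infty(\Omega)} \le 10(d-1)(N+1)^{-7dL}$. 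Combining this with the coefficient bound $|v_{\boldsymbol{l},\boldsymbol{i}}|\le 2^{-d}2^{-|\boldsymbol{l}|_1}|f|_{2,\infty}$ of Lemma \ref{boundconstant} and the disjointness of the supports of $\hat{\phi}_{\boldsymbol{l},\boldsymbol{i}}$ within each level $\boldsymbol{l}$, a geometric sum in $\boldsymbol{l}$ yields a total basis error of $\mathcal{O}((N+1)^{-7dL})$.

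The last step is the size bookkeeping. I would choose the sparse-grid level $n\sim \log_2(\widetilde{N}\widetilde{L})$, with a $\log\log$ correction that absorbs the $|\log_2 M|^{3(d-1)}$ factor, so that the truncation error is $\mathcal{O}((\widetilde{N}\widetilde{L})^{-2})$; setting $N=1$ in Proposition \ref{grid} and taking $L$ of order $\log_2(\widetilde{N}\widetilde{L})$ then renders the basis error $\mathcal{O}((\widetilde{N}\widetilde{L})^{-7d})$, hence negligible. The network assembly copies the parallel-then-serial decomposition of Theorem \ref{H1}: view $\phi$ as the parallel composition of $M\approx \widetilde{N}\widetilde{L}(\log_2\widetilde{N}\widetilde{L})^{3(d-1)/2}$ per-basis networks, group these into bundles of width $\mathcal{O}(\widetilde{N}(\log_2\widetilde{N})^{3d})$, and sum the bundles serially via Proposition \ref{sum} to produce depth $\mathcal{O}(\widetilde{L}(\log_2\widetilde{L})^{3d})$.

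The main obstacle is exactly this bookkeeping: tracking how the enlarged log exponent $3(d-1)$ in Lemma \ref{err-first} (versus $d-1$ in the $H^1$ case) propagates through the parallel/serial splitting of Proposition \ref{sum} to yield the final log exponent $3d$ in both the width and the depth, rather than the exponent $d$ obtained in Theorem \ref{H1}. Once this size accounting is verified, the boundary condition $\phi|_{\partial\Omega}=0$ follows immediately from $\mathrm{supp}\,\hat{\phi}_{\boldsymbol{l},\boldsymbol{i}}\subset\mathrm{supp}\,\phi_{\boldsymbol{l},\boldsymbol{i}}$.
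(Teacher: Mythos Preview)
Your proposal is correct and follows exactly the approach the paper intends: the paper's own ``proof'' of Corollary~\ref{L2} is simply the one-line remark that it is obtained ``following the same idea in the proof'' of Theorem~\ref{H1}, and your elaboration (replacing the $H^1$ truncation bound of Lemma~\ref{err-first} by its $L_\infty$ version, then redoing the size bookkeeping so that the log exponent $3(d-1)$ propagates to $(\log_2 N)^{3d}$, $(\log_2 L)^{3d}$) is precisely that idea made explicit. Your additional observations---that the $L_\infty$ basis-replacement bound drops the $2^{|\boldsymbol{l}|_1}$ chain-rule factor and that a $\log\log$ correction to $n$ is needed to absorb the residual $(\log_2 M)^{d-1}$ factor in the error---are correct refinements the paper leaves implicit.
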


  Note that the number of parameters is $\fO(N^2L(\log_2 L)^{3d}(\log_2 N)^{6d})$, with an error of $\fO(N^{-2}L^{-2})$. This result is consistent with the findings in \cite{montanelli2019new} when we fix $N$ and consider depth $L$. Our result achieves the optimal approximation rate for continuous function approximation, as established in \cite{devore1989optimal}. The main improvement in our findings, compared to \cite{montanelli2019new, mao2022approximation, blanchard2021shallow, suzuki2018adaptivity}, lies in our consideration of depth flexibility in DNNs and the establishment of the approximation rate measured by the $H^1$ norms.

  \section{Super Convergence Rates for Korobov Functions in $L_p$-norm}\label{LopKK}
  In this section, our primary objective is to establish Deep Neural Networks (DNNs) as function approximators within Korobov Spaces with a \textit{super-convergence} rate, surpassing existing works. In this context, we focus solely on error measurement using $L_p$, where $1\le p\le \infty$. In the next section, we will extend our error analysis to include Sobolev norms, specifically the $H^1$ norm.

  \begin{theorem}\label{L2 K}
    For any $f\in X^{2,\infty}(\Omega)$ and $|f|_{2,\infty}\le 1$, $N, L\in\sN_+$, there is a $\sigma$-NN $k(\boldsymbol{x})$ with $3^d(64d(N+3)(\log_2(8N))^{d+1})$  width and $(33L+2)(\log_2(4L))^{d+1}+2d$ depth such that\begin{align}
        \left\|k(\boldsymbol{x})-f(\boldsymbol{x})\right\|_{L_p(\Omega)}\le  CN^{-4}L^{-4}(\log_2N)^{d-1}(\log_2L)^{d-1},
   \end{align} where $C$ is the constant independent with $N,L$, and polynomially dependent on the dimension $d$, $1\le p\le \infty$.
\end{theorem}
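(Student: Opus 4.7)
The plan is to sharpen the construction of Theorem \ref{H1} by using the bit-extraction technique (Proposition \ref{point}) to encode the hierarchical coefficients $v_{\boldsymbol{l},\boldsymbol{i}}$, rather than devoting an independent sub-network to each basis function. I would start from the hierarchical truncation $f_n^{(1)}(\boldsymbol{x})=\sum_{|\boldsymbol{l}|_1\le n+d-1}\sum_{\boldsymbol{i}\in\boldsymbol{i}_{\boldsymbol{l}}} v_{\boldsymbol{l},\boldsymbol{i}}\phi_{\boldsymbol{l},\boldsymbol{i}}(\boldsymbol{x})$ and choose $n=2\lfloor\log_2 N\rfloor+2\lfloor\log_2 L\rfloor$, so that $2^{n-1}\le N^2L^2$ and Lemma \ref{err-first}, combined with $M=\mathcal{O}(2^n n^{d-1})$, yields the target truncation error $\mathcal{O}(N^{-4}L^{-4}(\log_2 N\log_2 L)^{d-1})$ in $L_p$.

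For each admissible level $\boldsymbol{l}$, the supports $\{\mathrm{supp}\,\phi_{\boldsymbol{l},\boldsymbol{i}}\}_{\boldsymbol{i}\in\boldsymbol{i}_{\boldsymbol{l}}}$ are disjoint, so on $\mathrm{supp}\,\phi_{\boldsymbol{l},\boldsymbol{i}}$ the level sum collapses to a single term. I will build a network for $g_{\boldsymbol{l}}(\boldsymbol{x}):=\sum_{\boldsymbol{i}\in\boldsymbol{i}_{\boldsymbol{l}}}v_{\boldsymbol{l},\boldsymbol{i}}\phi_{\boldsymbol{l},\boldsymbol{i}}(\boldsymbol{x})$ in three stages: (a) apply Proposition \ref{step} coordinate-wise with $K_r=2^{l_r-1}$ to extract the cell index $j_r$ with $x_r\in[2j_r h_{l_r},(2j_r+2)h_{l_r}]$, then flatten into a single integer $k(\boldsymbol{j})=\sum_r j_r\prod_{s<r}2^{l_s-1}\le 2^{|\boldsymbol{l}|_1-d}\le N^2L^2$; (b) use Proposition \ref{point} with $s=2$ to fetch $\widehat v_{\boldsymbol{l},k}\approx v_{\boldsymbol{l},\boldsymbol{i}(k)}$ up to accuracy $N^{-4}L^{-4}$; (c) use Proposition \ref{prop1} to multiply $\widehat v_{\boldsymbol{l},k}$ by the approximated tensor-product hat $\widehat\phi_{\boldsymbol{l},\boldsymbol{i}(k)}(\boldsymbol{x})$ built as in Proposition \ref{grid}. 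Because $|v_{\boldsymbol{l},\boldsymbol{i}}|\le 2^{-d}2^{-|\boldsymbol{l}|_1}$ by Lemma \ref{boundconstant} and $\|\phi_{\boldsymbol{l},\boldsymbol{i}}\|_{L_\infty}\le 1$, the bit-extraction error times the basis function is already $\mathcal{O}(N^{-4}L^{-4})$, and the product error in (c) is doubly-exponentially small and thus harmless.

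The $3^d$ factor in the width comes from handling the $\delta$-gaps in Proposition \ref{step}: the step function equals $k$ only on $[k/K,(k+1)/K-\delta]$, so near each cell boundary the index read-out, and hence the entire pipeline, is unreliable. I will partition $\Omega$ into three shifted sub-grids per coordinate (giving $3^d$ in total) and combine the associated reconstructions by a ReLU partition of unity that vanishes on the unsafe strip of each shift; every $\boldsymbol{x}\in\Omega$ is safely inside at least one shift. Choosing $\delta$ polynomially small in $N,L$ makes the measure of the boundary strips $\mathcal{O}(N^2L^2\delta)$ negligible in $L_p$. This produces the claimed $3^d$ multiplicative blow-up in the width and introduces an additive $O(d)$ depth (the additive $2d$ in the depth bound) for implementing the partition-of-unity cutoffs.

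Finally, I will assemble the $\binom{n+d-1}{d-1}=\mathcal{O}((\log_2 N\log_2 L)^{d-1})$ level networks using Proposition \ref{sum}, distributing them partly in parallel (to boost the width by $(\log_2 N)^{d}$ and $(\log_2 L)^{d}$ and the $d$ factor) and partly in series (to pick up one more $\log_2$ factor in the depth), which gives the stated width $3^d\cdot 64d(N+3)(\log_2 8N)^{d+1}$ and depth $(33L+2)(\log_2 4L)^{d+1}+2d$. The main obstacle, and the step requiring the most careful bookkeeping, is tracking the three superimposed error sources -- the Lemma \ref{err-first} truncation, the bit-extraction error in stage (b), and the product error in stage (c) -- against the level-dependent scale $2^{|\boldsymbol{l}|_1}$ that appears when summing across $\boldsymbol{l}$; this is exactly where the normalization $|f|_{2,\infty}\le 1$ and the coefficient bound of Lemma \ref{boundconstant} are used to cancel the exponential blow-up and keep each level's contribution $\mathcal{O}(N^{-4}L^{-4})$.
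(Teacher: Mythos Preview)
Your plan matches the paper's overall strategy: level-by-level decomposition of $f_n^{(1)}$, bit-extraction (Proposition~\ref{point}) for the hierarchical coefficients, step functions (Proposition~\ref{step}) for cell indexing, and $3^d$ shifted copies to repair the $\delta$-gaps. Two points where your write-up diverges from the paper's execution are worth flagging.

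First, in stage~(c) you invoke Proposition~\ref{grid}, but that proposition builds $\hat\phi_{\boldsymbol{l},\boldsymbol{i}}$ for one \emph{fixed} index $\boldsymbol{i}$; using it literally would need one sub-network per $\boldsymbol{i}$ and destroy the gain from bit-extraction. The paper instead factorises $\sum_{\boldsymbol{i}\in\boldsymbol{i}_{\boldsymbol{l}}}v_{\boldsymbol{l},\boldsymbol{i}}\phi_{\boldsymbol{l},\boldsymbol{i}}=p_{\boldsymbol{l}}\cdot q_{\boldsymbol{l}}$, where $q_{\boldsymbol{l}}$ is the piecewise-constant coefficient field (handled by your stages (a)--(b)) and $p_{\boldsymbol{l}}(\boldsymbol{x})=\prod_r p_{l_r}(x_r)$ is a tensor product of 1D sawtooth functions with $2^{l_r-1}$ teeth, each realised by composing four small periodic pieces $\psi_1\circ\psi_2\circ\psi_3\circ\psi_4$ of width $O(N)$ and depth $O(L)$. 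Your idea is salvageable (shift $x_r$ by the extracted index and feed it to a single fixed hat), but as written the reference to Proposition~\ref{grid} does not supply this step.

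Second, for the $\delta$-gaps the paper does not use a partition of unity. It applies Lemma~\ref{link}, a max/min combination over $3^d$ translates that yields width $3^d(N+4)$, depth $L+2d$, and error $\varepsilon+d\,\omega_f(\delta)$; this is precisely why Proposition~\ref{continuous} (continuity of $f$) is proved and why the depth increment is exactly $2d$. Your partition-of-unity variant is a legitimate alternative and would spare you the continuity argument, but realising the products $g_m\cdot\tilde k_m$ via Proposition~\ref{2prop} costs more than an additive $2d$ in depth, so the stated budget would need adjusting. Also, your side remark that the boundary strips have negligible $L_p$-measure does not address $p=\infty$; the covering argument, not the measure argument, is what carries the $L_\infty$ case.
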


  We initially employ Deep Neural Networks (DNNs) to approximate functions \(f \in X^{2,\infty}(\Omega)\) across the entire domain \(\Omega\), excluding a small set. The domain is precisely defined as follows:
\begin{definition} \label{delta}For any $n\ge 1$, let $\delta=\frac{1}{2^{n+2}}$. For any $|\boldsymbol{l}|_1\le {n+d-1}$
      \begin{align}
			    \Omega_{\boldsymbol{l},\delta}&=\bigcup_{\boldsymbol{i}\in \boldsymbol{i}_{\boldsymbol{l}}}\Omega_{\boldsymbol{l},\boldsymbol{i},\delta},\\\notag
       \Omega_{\boldsymbol{l},\boldsymbol{i},\delta}&= \prod_{r=1}^d\left[\frac{2i_r-1}{2K_r},\frac{2i_r+1}{2K_r}-\delta\cdot 1_{k< K_r-1}\right],~K_r=2^{l_r-1}.
			\end{align} Then we define \[\Omega_{\delta}=\bigcap_{|\boldsymbol{l}|_1\le n+d-1}\Omega_{\boldsymbol{l},\delta}.\]
  \end{definition}

  \begin{proposition}\label{main1}
			For any $f\in X^{2,\infty}(\Omega)$ with $p\ge1$ and $|f|_{2,\infty}\le 1$, $N, L\in\sN_+$, there is a $\sigma$-NN $\tilde{k}(\boldsymbol{x})$ with $64d(N+2)(\log_2(8N))^{d+1}$  width and $(33L+2)(\log_2(4L))^{d+1}$ depth, such that\begin{align}
        \left\|\tilde{k}(\boldsymbol{x})-f(\boldsymbol{x})\right\|_{L_\infty(\Omega_\delta)}\le  CN^{-4}L^{-4}(\log_2N)^{d-1}(\log_2L)^{d-1}.
   \end{align} where $C$ is the constant independent with $N,L$, and polynomially dependent on the dimension $d$.
		\end{proposition}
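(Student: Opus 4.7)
The plan is to adapt the super-convergence strategy of \cite{lu2021deep,shen2022optimal} to the sparse grid hierarchical representation of Korobov functions by exploiting the bit-extraction technique encoded in Proposition \ref{point}. First I would fix the truncation level $n \approx 2\log_2(NL)$ so that the hierarchical partial sum $f_n^{(1)}$ of Lemma \ref{err-first} uses $M=\mathcal{O}(2^n n^{d-1}) = \mathcal{O}(N^2 L^2 (\log_2(NL))^{d-1})$ basis elements, giving
\begin{equation*}
\|f-f_n^{(1)}\|_{L_\infty(\Omega)} \le C M^{-2}(\log_2 M)^{3(d-1)} \le C N^{-4}L^{-4}(\log_2(NL))^{d-1},
\end{equation*}
which matches the target rate for the truncation contribution.

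Next, for each level $\boldsymbol{l}$ with $|\boldsymbol{l}|_1 \le n+d-1$, I would construct a single $\sigma$-NN $g_{\boldsymbol{l}}$ approximating $\sum_{\boldsymbol{i}\in\boldsymbol{i}_{\boldsymbol{l}}} v_{\boldsymbol{l},\boldsymbol{i}}\phi_{\boldsymbol{l},\boldsymbol{i}}(\boldsymbol{x})$. The key structural fact is that at a fixed level the hat functions $\{\phi_{\boldsymbol{l},\boldsymbol{i}}\}$ have pairwise disjoint supports, since the indices $i_r$ are odd, so at each $\boldsymbol{x}$ only one summand is nonzero. The network $g_{\boldsymbol{l}}$ is built in three stages: (a) apply Proposition \ref{step} coordinate-wise to each $x_r$ to extract its cell index, and combine the outputs linearly into a scalar index $I\in\{0,\ldots,2^{|\boldsymbol{l}|_1-d}\}$; (b) apply Proposition \ref{point} with $s=2$ to obtain $\tilde{v}(I)\approx v_{\boldsymbol{l},\boldsymbol{i}(I)}$ with error at most $N^{-4}L^{-4}$; (c) multiply $\tilde{v}(I)$ by the hat-function value $\phi_{\boldsymbol{l},\boldsymbol{i}(I)}(\boldsymbol{x})$, where each univariate factor can be expressed via ReLU as the distance to the nearest grid point and assembled into the product through Proposition \ref{prop1}. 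The restriction to $\Omega_\delta$ is essential in stage (a): Proposition \ref{step} identifies cells cleanly only away from cell walls by at least $\delta$, and $\Omega_\delta$ is defined precisely so this holds simultaneously for every level $|\boldsymbol{l}|_1 \le n+d-1$. Summing the $\mathcal{O}(n^{d-1})$ level networks via Proposition \ref{sum}, and using Lemma \ref{boundconstant} to bound $|v_{\boldsymbol{l},\boldsymbol{i}}|\le 2^{-d}\cdot 2^{-|\boldsymbol{l}|_1}$, yields a total error $\mathcal{O}(N^{-4}L^{-4}(\log_2(NL))^{d-1})$ matching the truncation bound.

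The main obstacle is the careful bookkeeping required to match the advertised width $64d(N+2)(\log_2(8N))^{d+1}$ and depth $(33L+2)(\log_2(4L))^{d+1}$. The $(\log_2 N)^{d+1}$ and $(\log_2 L)^{d+1}$ factors must be assembled by combining the single $\log$ factor from bit-extraction (Proposition \ref{point}) with contributions from $d$ coordinate-wise cell identifications, the multi-index encoding, and the $d$-fold product in Proposition \ref{prop1}. I would also need to verify that the bit-extraction error, the product approximation error from Proposition \ref{prop1} (with $s$ chosen sufficiently large), and the coefficient decay in Lemma \ref{boundconstant} combine without accumulation beyond the claimed logarithmic factor, and that all hat-function values and bit-extracted coefficients remain within the domains required by Proposition \ref{prop1}.
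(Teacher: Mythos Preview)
Your plan is essentially the paper's proof: the paper also fixes $n\approx 2\log_2(NL)$, invokes Lemma~\ref{err-first} for the truncation, and for each level $\boldsymbol{l}$ writes $\sum_{\boldsymbol{i}} v_{\boldsymbol{l},\boldsymbol{i}}\phi_{\boldsymbol{l},\boldsymbol{i}}$ as a product $p_{\boldsymbol{l}}(\boldsymbol{x})\,q_{\boldsymbol{l}}(\boldsymbol{x})$, where $q_{\boldsymbol{l}}$ is the piecewise-constant coefficient map (built from Propositions~\ref{step} and~\ref{point} with $s=2$, exactly your stages (a)--(b)) and $p_{\boldsymbol{l}}$ is the tensor product of one-dimensional sawtooths (your stage (c)), with the final multiplication via Proposition~\ref{2prop} and the sum over the $\mathcal{O}(n^{d})$ levels handled by Proposition~\ref{sum}. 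The one point you leave implicit is how to realize the sawtooth ``distance to the nearest grid point'' within the width/depth budget: the paper does this by composing four small hat/sawtooth subnetworks $\psi_1\circ\psi_2\circ\psi_3\circ\psi_4$ to exploit periodicity, which is what keeps the per-level cost at width $\mathcal{O}(N\log N)$ and depth $\mathcal{O}(L\log L)$ rather than the naive $\mathcal{O}(2^{l_k})$; making this explicit is exactly the bookkeeping you flag as the main obstacle.
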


  \begin{proof}

  In the proof, our first step involves utilizing Deep Neural Networks (DNNs) to approximate  $ \sum_{\boldsymbol{i} \in \boldsymbol{i}_{\boldsymbol{l}_*}} v_{\boldsymbol{l}_*, \boldsymbol{i}} \phi_{\boldsymbol{l}_*, \boldsymbol{i}}(\boldsymbol{x})$ for $|\boldsymbol{l}_*|_1\le n+d-1$.
It's noteworthy that \( \sum_{\boldsymbol{i} \in \boldsymbol{i}_{\boldsymbol{l}_*}} v_{\boldsymbol{l}_*, \boldsymbol{i}} \phi_{\boldsymbol{l}_*, \boldsymbol{i}}(\boldsymbol{x})\) can be reformulated as \[g_{\boldsymbol{l}_*}(\boldsymbol{x})=p_{\boldsymbol{l}_*}(\boldsymbol{x})q_{\boldsymbol{l}_*}(\boldsymbol{x}).\] Here, \(q(\boldsymbol{x})\) is a piecewise function on \([0,1]^d\) defined by
\begin{equation}
    q_{\boldsymbol{l}_*}(\boldsymbol{x})=v_{\boldsymbol{l}_*,\boldsymbol{i}}, \text{ for } \boldsymbol{x}\in \prod_{k=1}^d\left[(i_k-1)\cdot 2^{-l_k},(i_k+1)\cdot 2^{-l_k}\right].
\end{equation}
Meanwhile, \(p(\boldsymbol{x})\) is a piecewise-polynomial defined as
\begin{align}
    p_{l_k}(x)&=\sum_{s=1}^{2^{k-1}}\phi\left(\frac{x_j-(2s-1) \cdot h_{l_j}}{h_{l_j}}\right) \notag \\
    p_{\boldsymbol{l}_*}(\boldsymbol{x})&=\prod_{k=1}^d p_{l_k}(x_k).
\end{align}
Here, \(p_{l_k}(x)\) represents a deformed sawtooth function.

  Let $N_r^2L_r^2\ge K_r= 2^{l_r-1}$. By leveraging Proposition \ref{step}, we ascertain the existence of a $\sigma$-NN $\phi_r(x)$ with a width of $4N_r+5$ and a depth of $4L_r+4$ such that
			\[\phi_r(x)=k,x\in\left[\frac{k}{K_r},\frac{k+1}{K_r}-\delta\cdot 1_{k< K_r-1}\right], ~k=1,\ldots,K_r-1,\]with $\delta\in\Big(0,\frac{1}{3K^r}\Big]$. 
   Then define \[\boldsymbol{\phi}_2(\boldsymbol{x})=\left[\frac{\phi_1(x_1)}{K_1},\frac{\phi_2(x_2)}{K_2},\ldots,\frac{\phi_d(x_d)}{K_d}\right]^T.\] 
   
   For each $p=0,1,\ldots,\prod_{r=1}^dK_r-1$, there is a bijection\[\boldsymbol{\eta}(p)=[\eta_1,\eta_2,\ldots,\eta_d]\in\prod_{r=1}^d \{0,\ldots,K_r-1\}\] such that $\sum_{j=1}^d\eta_j\prod_{r=1}^{j-1}K_r=p$.
   
  Set $C_{\alpha,\boldsymbol{l}_*}=2^{-d-|\boldsymbol{l}_*|_1}|f|_{2,\infty}\ge |v_{\boldsymbol{l}_*,\boldsymbol{i}}|$ for all $\boldsymbol{i}$, and define 
\begin{equation}
    \xi_{\boldsymbol{\alpha},\boldsymbol{l}_*,\boldsymbol{i}}=\frac{v_{\boldsymbol{l}_*,\boldsymbol{i}}+C_{\alpha,\boldsymbol{l}_*}}{2C_{\alpha,\boldsymbol{l}_*}}\in[0,1].
\end{equation} 

Based on Proposition \ref{point}, there exists a neural network $\tilde{\phi}_{\boldsymbol{\alpha}}(x)$ with a width of $16s(\tilde{N}+1)\log_2(8\tilde{N})$ and a depth of $(5\tilde{L}+2)\log_2(4\tilde{L})$ such that 
\[\left|\tilde{\phi}_{\boldsymbol{\alpha}}\left(\sum_{j=1}^d\frac{i_j-1}{2}\prod_{r=1}^{j-1}K_r\right)- \xi_{\boldsymbol{\alpha},\boldsymbol{l}_*,\boldsymbol{i}}\right|\le \tilde{L}^{-2s}\tilde{N}^{-2s}\]
for $|\boldsymbol{l}_*|_1 \leq n+d-1$ and $\boldsymbol{i} \in \boldsymbol{i}_{\boldsymbol{l}_*}$. Therefore, we define 
\begin{equation}
    \phi_{\boldsymbol{\alpha}}(\boldsymbol{x}):=2C_{\alpha,\boldsymbol{l}_*}\tilde{\phi}_{\boldsymbol{\alpha}}\left(\sum_{j=1}^dx_j\prod_{r=1}^{j}K_r\right)-C_{\alpha,\boldsymbol{l}_*}.
\end{equation} 

Consequently, we find that 
\begin{align}
    | \phi_{\boldsymbol{\alpha}}(\boldsymbol{\phi}_2(\boldsymbol{x}))-q_{\boldsymbol{l}_*}(\boldsymbol{x})|&=\left|2C_{\alpha,\boldsymbol{l}_*}\tilde{\phi}_{\boldsymbol{\alpha}}\left(\sum_{j=1}^d\frac{i_j-1}{2}\prod_{r=1}^{j-1}K_r\right)-C_{\alpha,\boldsymbol{l}_*}-v_{\boldsymbol{l}_*,\boldsymbol{i}}\right|\notag\\
    &\le 2C_{\alpha,\boldsymbol{l}_*} \left|\tilde{\phi}_{\boldsymbol{\alpha}}\left(\sum_{j=1}^d\frac{i_j-1}{2}\prod_{r=1}^{j-1}K_r\right)- \xi_{\boldsymbol{\alpha},\boldsymbol{l}_*,\boldsymbol{i}}\right|\le 2C_{\alpha,\boldsymbol{l}_*}\tilde{L}^{-2s}\tilde{N}^{-2s}
\end{align}
for $\boldsymbol{x}\in \Omega_{\boldsymbol{l}_*,\boldsymbol{i},\delta}=\prod_{r=1}^d\left[\frac{i_r-1}{2K_r},\frac{2i_r+1}{2K_r}-\delta\cdot 1_{k< K_r-1}\right]$.

 Since there are at most $2^{n-1}$ elements in $\boldsymbol{i}_{\boldsymbol{l}_*}$, we set $\tilde{L}^{2}\tilde{N}^{2}\ge 2^{n-1}$. Above all, we can let $L=\max\{\tilde{L},L_r\}=2^{n_1}$ and $N=\max\{\tilde{L},N_r\}=2^{n_2}$, where $2(n_1+n_2)$ is the smallest even number larger or equal to $n-1$. Then $\boldsymbol{\phi}_2(\boldsymbol{x})$ is a $\sigma$-NN with $4dN+5d$ width and $4L+4$ depth. $\phi_{\boldsymbol{\alpha}}(\boldsymbol{x})$ is a $\sigma$-NN with the width $16s(N+1)\log_2(8N)$ and depth $(5L+2)\log_2(4L)$. Above all, $\phi_{\boldsymbol{\alpha}}(\boldsymbol{\phi}_2(\boldsymbol{x}))$ is a $\sigma$-NN with $16sd(N+1)\log_2(8N)$ and depth $(9L+2)\log_2(4L)$. We denote this $\phi_{\boldsymbol{\alpha}}(\boldsymbol{\phi}_2(\boldsymbol{x}))$ as $s_{\boldsymbol{l}_*}(\boldsymbol{x})$, which can approximate $q_{\boldsymbol{l}_*}(\boldsymbol{x})$ well on $\Omega_{\boldsymbol{l}_*,\delta}$. Set $\delta=\frac{1}{2^{n+2}}$, we also find $s_{\boldsymbol{l}_*}(\boldsymbol{x})$, which can approximate $q_{\boldsymbol{l}_*}(\boldsymbol{x})$ well on $\Omega_{\boldsymbol{l}_*,\delta}$ for $|\boldsymbol{l}_*|_1\le n+d-1$.

Recall \[\Omega_{\delta}=\bigcap_{|\boldsymbol{l}_*|_1\le n+d-1}\Omega_{\boldsymbol{l}_*,\delta},\] then $s_{\boldsymbol{l}_*}(\boldsymbol{x})$, which can approximate $q_{\boldsymbol{l}_*}(\boldsymbol{x})$ well on $\Omega_{\delta}$ for $|\boldsymbol{l}_*|_1\le n+d-1$.

 Next, we aim to approximate $p_{\boldsymbol{l}_*}(\boldsymbol{x})=\prod_{k=1}^d p_{l_k}(x_k)$. The proof relies on leveraging the periodicity of each $p_{l_k}(x_k)$. We first define
\[\psi_1(x)=p_{l_k}(x),~x\in[0,2^{-n+1+n_1}], \text{~otherwise is }0.\]

Then, $\psi_1(x)$ is a neural network (NN) with $4N$ width and $1$ depth.
\begin{figure}[h!]
    \centering
    \includegraphics[scale=0.57]{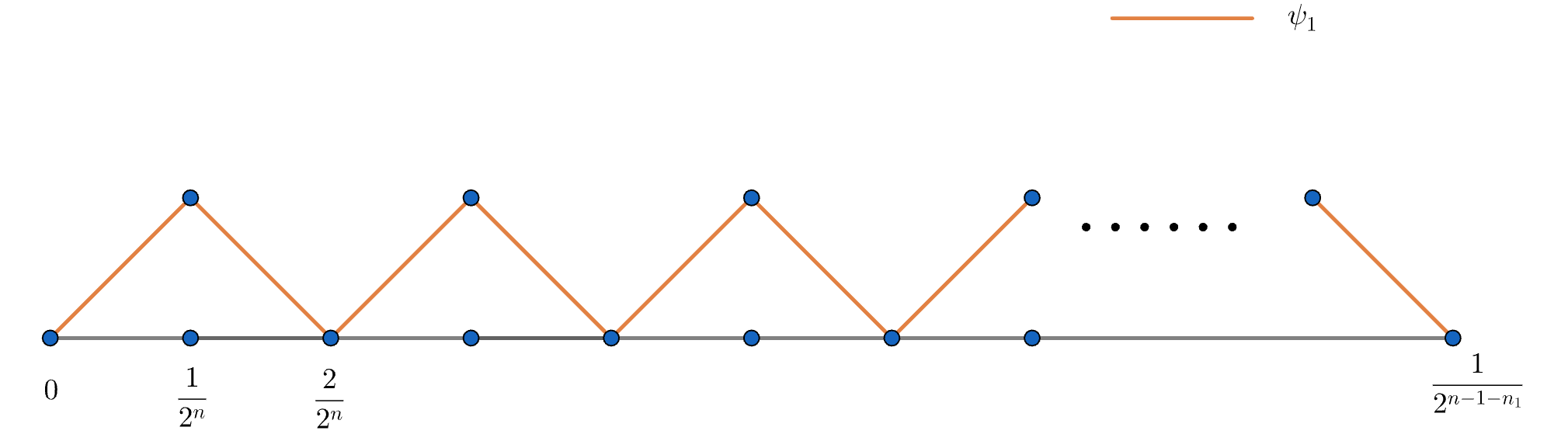}
    \caption{The sawtooth functions $\psi_1$}
    \label{psi4}
\end{figure}

Next, we construct $\psi_i$ for $i=2,3,4$ based on the symmetry and periodicity of $g_i$. $\psi_2$ is the function with period $\frac{2}{NL^2}$ in $\left[0,\frac{1}{L^2}\right]$, and each period is a hat function with gradient 1. $\psi_3$ is the function with period $\frac{2}{L^2}$ in $\left[0,\frac{1}{L}\right]$, and each period is a hat function with gradient 1. $\psi_4$ is the function with period ${\frac{2}{L}}$ in $\left[0,1\right]$, and each period is a hat function with gradient 1. Similar with $\psi_1$, $\psi_2$ is a network with $4N$ width and one layer. Due to Proposition \ref{sum}, we know that $\psi_3$ and $\psi_4$ is a network with $7$ width and $L+1$ depth.

%For the remaining operations, we reflect $\psi_1(x)$ a total of $n_1+2n_2$ times. For each reflection, it composes a hat function. Therefore, $p_{l_k}(x_k)$ is a DNN with $3N$ width and $2\log_2L+\log_2N$ depth. Since $\log_2 N\le L$, $p_{l_k}(x_k)$ is a DNN with $3N$ width and $2\log_2L+L$ depth.

Finally, by Proposition \ref{prop1}, there exists a $\sigma$-NN $w_{\boldsymbol{l}_*}(\boldsymbol{x})$ with $4(N+d+3)+s'-1$ width and $16s'(s'-1)L$ depth, such that
\begin{equation}
    \|w_{\boldsymbol{l}_*}(\boldsymbol{x})-p_{\boldsymbol{l}_*}(\boldsymbol{x})\|_{L_\infty(\Omega)}\le 10(s'-1)(N+1)^{-7 s' L}.
\end{equation}

Based on Proposition \ref{2prop}, since $|v_{\boldsymbol{l}_*,\boldsymbol{i}}|\le 1$, there exists $\hat{\phi}$ with a width of $15N$ and a depth of $24L$ such that
\begin{equation}
    \left\|\hat{\phi}(x,y)-xy\right\|_{L^{\infty}([-1,1]^2)}\le 6N^{-8L}.
\end{equation}
Therefore, we have 
\begin{align}
    &\|\hat{\phi}(s_{\boldsymbol{l}_*}(\boldsymbol{x}),w_{\boldsymbol{l}_*}(\boldsymbol{x}))-p_{\boldsymbol{l}_*}(\boldsymbol{x})q_{\boldsymbol{l}_*}(\boldsymbol{x})\|_{L_\infty(\Omega_\delta)}\notag\\
    \le&\|\hat{\phi}(s_{\boldsymbol{l}_*}(\boldsymbol{x}),w_{\boldsymbol{l}_*}(\boldsymbol{x}))-s_{\boldsymbol{l}_*}(\boldsymbol{x})w_{\boldsymbol{l}_*}(\boldsymbol{x})\|_{L_\infty(\Omega_\delta)}\notag\\
    &+\|s_{\boldsymbol{l}_*}(\boldsymbol{x})w_{\boldsymbol{l}_*}(\boldsymbol{x})-s_{\boldsymbol{l}_*}(\boldsymbol{x})p_{\boldsymbol{l}_*}(\boldsymbol{x})\|_{L_\infty(\Omega_\delta)}\notag\\
    &+\|s_{\boldsymbol{l}_*}(\boldsymbol{x})p_{\boldsymbol{l}_*}(\boldsymbol{x})-p_{\boldsymbol{l}_*}(\boldsymbol{x})q_{\boldsymbol{l}_*}(\boldsymbol{x})\|_{L_\infty(\Omega_\delta)}\notag\\
    \le&6N^{-12L}+20(s'-1)(N+1)^{-7 s' L}+4C_{\alpha,\boldsymbol{l}_*}L^{-2s}N^{-2s}.
\end{align}
Setting $s'=s=2$, we notice that 
\begin{align}
    10(s'-1)(N+1)^{-7 s' L}&=20(N+1)^{-14L}\le 20(N+1)^{-4(L+1)}\le 20 N^{-4}L^{-4}\notag\\
    6N^{-8L}&=6N^{4L}\cdot N^{-4(L+1)}\le  6 N^{-4}L^{-4}.
\end{align}

Above all, we have that there exists a $\sigma$-NN $\psi_{\boldsymbol{l}_*}$ with $64d(N+1)\log_2(8N)$ width and $(33L+2)\log_2(4L)$ depth such that 
\begin{equation}
    \left\|\psi_{\boldsymbol{l}_*}(\boldsymbol{x})-\sum_{\boldsymbol{i} \in \boldsymbol{i}_{\boldsymbol{l}_*}} v_{\boldsymbol{l}_*, \boldsymbol{i}} \phi_{\boldsymbol{l}_*, \boldsymbol{i}}(\boldsymbol{x})\right\|_{L_\infty(\Omega_\delta)}\le (26+4C_{\alpha,\boldsymbol{l}_*})N^{-4}L^{-4}.
\end{equation}

 Similarly, we can find $\sigma$-NNs $\{\psi_{\boldsymbol{l}}(\boldsymbol{x})\}_{|\boldsymbol{l}|_1\le n+d-1}$ for other $\sum_{\boldsymbol{i} \in \boldsymbol{i}_{\boldsymbol{l}}} v_{\boldsymbol{l}, \boldsymbol{i}} \phi_{\boldsymbol{l}, \boldsymbol{i}}(\boldsymbol{x})$ for other $\boldsymbol{l}$. Since there are at most $n^d=(2\log_2(NL)+1)^d$ satisfied $|\boldsymbol{l}|_1\le n+d-1$, we can have a $\sigma$-NN $\tilde{k}(\boldsymbol{x})$ with 
\begin{equation}
    \text{width } 32d(N+1)\log_2(8N)(2\log_2(NL)+1)^d
\end{equation}
and 
\begin{equation}
    \text{depth } (33L+2)\log_2(4L)
\end{equation}
such that 
\begin{equation}
    \left\|\tilde{k}(\boldsymbol{x})-f_1^{(n)}(\boldsymbol{x})\right\|_{L_\infty(\Omega_\delta)}\le (52+8C_{\alpha,\boldsymbol{l}_*})N^{-4}L^{-4}.
\end{equation}

Thanks to Proposition \ref{sum}, $\tilde{k}(\boldsymbol{x})$ can be expressed as 
\begin{equation}
    \frac{32d(N+1)\log_2(8N)(2\log_2(NL)+1)^d}{(\log_2L)^d} \le 64d(N+2)(\log_2(8N))^{d+1}
\end{equation}
width and 
\begin{equation}
    (33L+2)(\log_2(4L))^{d+1}
\end{equation}
depth.

Finally, combining with Lemma \ref{err-first} and $n=2\log_2(NL)+1$, we have that 
\begin{align}
    \left\|\tilde{k}(\boldsymbol{x})-f(\boldsymbol{x})\right\|_{L_\infty(\Omega_\delta)}\le &(52+8C_{\alpha,\boldsymbol{l}_*})N^{-4}L^{-4}+CN^{-4}L^{-4}\frac{(2\log_2(NL)+1)^{3(d-1)}}{(2\log_2(NL)+1)^{2(d-1)}}\notag\\
    \le & CN^{-4}L^{-4}(\log_2N)^{d-1}(\log_2L)^{d-1}.
\end{align}
   
\end{proof}

The next step is to approximate $f\in X^{2,\infty}(\Omega)$ over the entire domain. To begin, we need to establish that $f\in X^{2,\infty}(\Omega)$ is a continuous function.

\begin{proposition}\label{continuous}
    Suppose that $f\in X^{2,\infty}(\Omega)$, then $f$ is a continuous function on $\Omega$.
\end{proposition}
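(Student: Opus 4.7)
The plan is to reduce the claim to a standard Sobolev/Morrey embedding. By the definition of $X^{2,\infty}(\Omega)$, every multi-index $\boldsymbol{k}$ with $|\boldsymbol{k}|_\infty \le 2$ gives $D^{\boldsymbol{k}}f \in L_\infty(\Omega)$. In particular, choosing $\boldsymbol{k}=\boldsymbol{0}$ yields $f \in L_\infty(\Omega)$, and choosing $\boldsymbol{k}=e_i$ for each coordinate $i=1,\ldots,d$ yields $\partial_i f \in L_\infty(\Omega)$. Hence $f \in W^{1,\infty}(\Omega)$, with $\|\nabla f\|_{L_\infty(\Omega)}<\infty$. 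So the task reduces to showing that the precise representative of any $W^{1,\infty}$ function on the convex bounded domain $\Omega=[0,1]^d$ is continuous (indeed Lipschitz).

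To carry out the second step, I would mollify. Extend $f$ to a neighborhood of $\Omega$ by zero (which is legitimate because $f|_{\partial\Omega}=0$ eliminates jump discontinuities at the boundary and the extension preserves $\|\nabla f\|_{L_\infty}$, up to a boundary set of measure zero that can be handled by a slightly larger extension domain if needed). Let $\eta_\varepsilon$ be a standard mollifier and set $f_\varepsilon = f * \eta_\varepsilon$. Each $f_\varepsilon$ is smooth with $\|\nabla f_\varepsilon\|_{L_\infty} \le \|\nabla f\|_{L_\infty(\Omega)}$ on $\Omega$. For any $\boldsymbol{x},\boldsymbol{y}\in\Omega$, the segment $\boldsymbol{x}+t(\boldsymbol{y}-\boldsymbol{x})$ lies in $\Omega$ by convexity, and the fundamental theorem of calculus yields $|f_\varepsilon(\boldsymbol{x})-f_\varepsilon(\boldsymbol{y})| \le \|\nabla f\|_{L_\infty(\Omega)}\,|\boldsymbol{x}-\boldsymbol{y}|$. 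Thus $\{f_\varepsilon\}$ is uniformly bounded and equi-Lipschitz on $\bar{\Omega}$, so Arzelà–Ascoli supplies a subsequence converging uniformly to some Lipschitz $\widetilde{f}\in C^{0,1}(\bar{\Omega})$. Since $f_\varepsilon \to f$ in $L_1$, the limit $\widetilde{f}$ agrees with $f$ almost everywhere and is the desired continuous representative.

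The main obstacle, such as it is, lies exclusively in this passage from bounded weak derivatives to a continuous representative; once the mollification and equi-Lipschitz estimate are in place, continuity is automatic. As a cleaner alternative that avoids extension subtleties, one could instead invoke the hierarchical representation directly: Lemma \ref{boundconstant} bounds the coefficients as $|v_{\boldsymbol{l},\boldsymbol{i}}|\le 2^{-d-|\boldsymbol{l}|_1}|f|_{2,\infty}$, and at each fixed level $\boldsymbol{l}$ the hat functions $\{\phi_{\boldsymbol{l},\boldsymbol{i}}\}_{\boldsymbol{i}\in \boldsymbol{i}_{\boldsymbol{l}}}$ have essentially disjoint supports with sup-norm bounded by $1$, so the partial sum at level $\boldsymbol{l}$ has sup-norm at most $2^{-d-|\boldsymbol{l}|_1}|f|_{2,\infty}$. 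Summing the geometric series over $\boldsymbol{l}$ gives a uniformly convergent series of continuous functions; its limit is therefore continuous and, by Lemma \ref{err-first}, coincides with $f$. Either path completes the proof.
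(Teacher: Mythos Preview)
Your proof is correct and takes a genuinely different route from the paper. The paper argues via Fourier series: it uses the $L_2$ bound on $\partial^{2d}f/\partial x_1^2\cdots\partial x_d^2$ to control $\sum_{\boldsymbol{k}} (k_1^2\cdots k_d^2)^2|\hat f_{\boldsymbol{k}}|^2$, and then Cauchy--Schwarz shows the Fourier tail $\sum_{|\boldsymbol{k}|_2>N}|\hat f_{\boldsymbol{k}}|$ tends to zero, yielding uniform convergence of the partial sums $S_N$ to $f$. Your first approach is more elementary: the inclusion $X^{2,\infty}(\Omega)\subset W^{1,\infty}(\Omega)$ is immediate from the definition, and the embedding $W^{1,\infty}(\Omega)\hookrightarrow C^{0,1}(\bar\Omega)$ for convex $\Omega$ is a classical fact one could simply cite from \cite{evans2022partial}, thereby avoiding the extension-by-zero step (which, as you rightly flag, is delicate because zero extension does not preserve $W^{1,\infty}$ without a condition on the normal derivative; the standard mollification argument works directly inside the convex domain instead). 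Your second approach --- summing the hierarchical series using Lemma~\ref{boundconstant} and the disjoint supports of the $\phi_{\boldsymbol{l},\boldsymbol{i}}$ at each level --- is arguably the cleanest option here, since it reuses precisely the machinery the paper has already developed and sidesteps the minor bookkeeping in the paper's Fourier argument (periodic extension on $[0,1]^d$, the vanishing of $k_1^2\cdots k_d^2$ when some $k_j=0$, and matching the discrete sum to the radial integral estimate).
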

\begin{proof}
   Let $S_N(\boldsymbol{x})$ be the $N$th partial sum of the Fourier series of $f$, i.e.,
\[S_N(\boldsymbol{x})=\frac{1}{(2\pi)^{\frac{d}{2}}}\sum_{|\boldsymbol{k}|_2\le N }\hat{f}_{\boldsymbol{k}}e^{\mathrm{i} \boldsymbol{k}\boldsymbol{x}}.\]

Note that
\begin{equation}
    \left|\frac{\partial^{2d} f(\boldsymbol{x})}{\partial x_1^2\cdots\partial x_d^2}\right|=\frac{1}{(2\pi)^{\frac{d}{2}}}\sum_{\boldsymbol{k}\in \sN^d }(k_1^2k_2^2\cdots k_d^2)\hat{f}_{\boldsymbol{k}}e^{\mathrm{i} \boldsymbol{k}\boldsymbol{x}}.
\end{equation}

Since $\left\|\frac{\partial^{2d} f(\boldsymbol{x})}{\partial x_1^2\cdots\partial x_d^2}\right\|_{L_\infty(\Omega)}$ exists, then
\(\left\|\frac{\partial^{2d} f(\boldsymbol{x})}{\partial x_1^2\cdots\partial x_d^2}\right\|_{L_2(\Omega)}\)
exists. Therefore, we know that
\[C_f:=\sum_{\boldsymbol{k}\in \sN^d }(k_1^2k_2^2\cdots k_d^2)^2|\hat{f}_{\boldsymbol{k}}|^2\]
exists.

For any $\boldsymbol{x}\in\Omega$, we have
\begin{equation}
    \begin{aligned}
        \left|S_N(\boldsymbol{x})-f(\boldsymbol{x})\right| & \leq \frac{1}{(2\pi)^{\frac{d}{2}}} \sum_{N<|\boldsymbol{k}|_2 }\left|\hat{f}_{\boldsymbol{k}}\right| \\
        & =\frac{1}{(2\pi)^{\frac{d}{2}}} \sum_{N<|\boldsymbol{k}|_2 }(k_1^2k_2^2\cdots k_d^2)\left|\hat{f}_{\boldsymbol{k}}\right| \frac{1}{(k_1^2k_2^2\cdots k_d^2)} \\
        & \leq \frac{1}{(2\pi)^{\frac{d}{2}}}\left[\sum_{N<|\boldsymbol{k}|_2 }(k_1^2k_2^2\cdots k_d^2)^2\left|\hat{f}_{\boldsymbol{k}}\right|^2\right]^{1 / 2}\left[\sum_{N<|\boldsymbol{k}|_2} \frac{1}{(k_1^2k_2^2\cdots k_d^2)^{2 }}\right]^{1 / 2} \\
        & \leq C\left(\int_N^\infty r^{-3d-1}\mathrm{d} r\right)^{\frac{1}{2}}\le \frac{C}{N^{\frac{3}{2}d}}.
    \end{aligned}
\end{equation}

Therefore, $\lim_{N\to\infty}S_N=f$ uniformly converges. Hence, $f$ is continuous.
\end{proof}

Next, the following lemma establishes a connection between the approximation on $\Omega_\delta$ and that in the whole domain.

\begin{lemma}[{\cite{lu2021deep,shen2022optimal}}]\label{link}
 Given any $\varepsilon>0, N, L, K \in \sN^{+}$, and $\delta \in\left(0, \frac{1}{3 K}\right]$, assume $f$ is a continuous function in $C\left([0,1]^d\right)$ and $\widetilde{\phi}$ can be implemented by a ReLU network with width $N$ and depth $L$. If
$$
|f(\boldsymbol{x})-\widetilde{\phi}(\boldsymbol{x})| \leq \varepsilon, \quad \text { for any } \boldsymbol{x} \in \Omega_\delta,
$$
then there exists a function $\phi$ implemented by a new ReLU network with width $3^d(N+4)$ and depth $L+2 d$ such that
$$
|f(\boldsymbol{x})-\phi(\boldsymbol{x})| \leq \varepsilon+d \cdot \omega_f(\delta), \quad \text { for any } \boldsymbol{x} \in[0,1]^d \text {, }
$$ where \begin{equation}
\omega_f(r):=\sup \left\{|f(\boldsymbol{x})-f(\boldsymbol{y})|: \boldsymbol{x}, \boldsymbol{y} \in[0,1]^d,\|\boldsymbol{x}-\boldsymbol{y}\|_2 \leq r\right\}, \quad \text { for any } r \geq 0.
\end{equation}
\end{lemma}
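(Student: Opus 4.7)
My plan is to build $\phi$ by prepending a coordinate-wise ``shift-and-select'' preprocessing module to $\widetilde{\phi}$, so that every $\boldsymbol{x}\in[0,1]^d$ is effectively mapped into $\Omega_\delta$ before being passed into $\widetilde{\phi}$, at the cost of at most $d$ successive $\delta$-perturbations of $\boldsymbol{x}$. Since each bad strip excluded from $\Omega_\delta$ has 1D width $\delta\le 1/(3K)$, for every $x_i\in[0,1]$ at least one of the three shifted values $x_i-\delta,\ x_i,\ x_i+\delta$ (after clipping to $[0,1]$) avoids every bad strip in coordinate $i$. Hence among the $3^d$ combined shift patterns $\boldsymbol{\epsilon}\in\{-1,0,1\}^d$, for every $\boldsymbol{x}\in[0,1]^d$ there exists at least one $\boldsymbol{\epsilon}$ with $\boldsymbol{x}+\delta\boldsymbol{\epsilon}\in\Omega_\delta$.

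Concretely, I would run $3^d$ parallel copies of $\widetilde{\phi}$, with the $\boldsymbol{\epsilon}$-th copy fed the shifted-and-clipped input $\boldsymbol{x}+\delta\boldsymbol{\epsilon}$. Each copy uses width $N$ for $\widetilde{\phi}$ plus a constant number of auxiliary neurons (giving the additive $+4$ per copy) to compute the affine shift, perform coordinate-wise clipping via two ReLUs, and carry forward a 1D selector signal. The affine shift/clip prefix and the per-dimension selection block together account for the $+2d$ depth overhead, yielding total width $3^d(N+4)$ and depth $L+2d$ as claimed.

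To combine the $3^d$ outputs into a single scalar approximating $f(\boldsymbol{x})$, I would perform the selection iteratively, one dimension at a time, using two additional layers per dimension. At step $i$, each surviving triple of branches differing only in the $i$-th shift coordinate is collapsed into one branch via a convex combination whose weights are piecewise-linear 1D indicators of $x_i$, designed to concentrate on those $\epsilon_i$ for which $x_i+\delta\epsilon_i$ lies in the good 1D region of coordinate $i$. A telescoping argument then applies: at each step the combined value differs from the value of the ``correct'' still-all-good branch by at most one $\omega_f(\delta)$-jump, because the three branches being combined have inputs agreeing in every coordinate except the $i$-th (where they differ by at most $\delta$), so by the definition of $\omega_f$ the $f$-values differ by at most $\omega_f(\delta)$. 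After $d$ steps, the single surviving branch has its input in $\Omega_\delta$ (so its $\widetilde{\phi}$-value is within $\varepsilon$ of $f$), and the accumulated selection error is at most $d\cdot\omega_f(\delta)$.

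The main obstacle is designing the 1D selector indicators so that (i) for every $\boldsymbol{x}$ the convex combination at each dimension concentrates its mass only on shift patterns whose inputs lie in the good 1D region (so the final surviving branch indeed has its input in $\Omega_\delta$, preserving the $\varepsilon$ term), (ii) the indicators are implementable by a two-layer ReLU block of width bounded by a constant per branch (so they fit the $+4$ width and $+2d$ depth overhead), and (iii) the transitions between selection regions are continuous, so no jumps are introduced and the triangle inequality gives a clean bound. All three requirements reduce to assembling piecewise-linear trapezoids supported on the good 1D intervals with transition widths proportional to $\delta$, a routine ReLU construction; once those bookkeeping details are checked, the bound $\varepsilon+d\cdot\omega_f(\delta)$ follows directly from the triangle inequality and the definition of $\omega_f$.
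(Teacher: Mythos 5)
Your reduction to coordinatewise $\delta$-shifts and the telescoping bound $\varepsilon+d\cdot\omega_f(\delta)$ is the right skeleton, but the combination mechanism you propose has two concrete problems that break the width/depth accounting. First, your selector weights are input-dependent convex-combination coefficients, so the combined output at each step is a sum of products $w_{\epsilon_i}(x_i)\cdot(\text{branch output})$. A ReLU network cannot compute such a product exactly (it is piecewise quadratic in general), so you would have to approximate the multiplication, which both introduces extra error not present in the claimed bound and costs additional depth/width far beyond the stated $+4$ per copy and $+2d$ total depth. Second, the ``piecewise-linear trapezoid indicators'' of the good 1D regions must vanish on every bad strip of every cell, i.e.\ they have $\Theta(K)$ breakpoints; a one-hidden-layer (or any fixed-depth, constant-width) ReLU block cannot realize a univariate piecewise-linear function with $\Theta(K)$ breakpoints, and in the application $K$ is as large as $N^2L^2$, so the selector alone would exceed the entire width budget $3^d(N+4)$. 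So requirement (ii) in your list of ``routine bookkeeping'' is in fact not satisfiable by your construction.

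The cited proof (from Lu--Shen--Yang--Zhang and Shen--Yang--Zhang) avoids both obstacles by replacing the weighted average with the median: define $\phi_0=\widetilde{\phi}$ and $\phi_i(\boldsymbol{x})=\mathrm{mid}\bigl(\phi_{i-1}(\boldsymbol{x}-\delta\boldsymbol{e}_i),\phi_{i-1}(\boldsymbol{x}),\phi_{i-1}(\boldsymbol{x}+\delta\boldsymbol{e}_i)\bigr)$. The three-input median is a fixed piecewise-linear function (expressible through $\max$ and $\min$), implementable exactly by a small ReLU block whose size is independent of $K$ and $\delta$; this is precisely what produces the factor $3$ in width and $+2$ in depth per coordinate, hence $3^d(N+4)$ and $L+2d$. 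Correctness uses a slightly stronger geometric fact than the one you invoke: since the bad strips have width $\delta$ and spacing $1/K\ge 3\delta$, at least \emph{two} of the three shifted values $x_i-\delta,x_i,x_i+\delta$ avoid the bad strips in coordinate $i$, so at least two of the three inputs to the median satisfy the inductive error bound, and the median of three numbers, two of which lie in an interval, also lies in that interval. This yields $|\phi_i(\boldsymbol{x})-f(\boldsymbol{x})|\le\varepsilon+i\,\omega_f(\delta)$ on the set of points good in coordinates $i+1,\dots,d$, and $i=d$ gives the lemma. If you want to salvage your approach, you would need a selection rule that is exactly ReLU-representable with $K$-independent cost; the median is exactly such a rule, and I do not see how an input-weighted convex combination can be made to fit the stated architecture.
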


Now, leveraging Propositions \ref{main1} and \ref{continuous} along with Lemma \ref{link}, we can derive the approximation of Korobov spaces with a super-convergence rate.

\begin{proof}[Proof of Theorem \ref{L2 K}]
    Based on Propositions \ref{main1} and \ref{continuous}, along with Lemma \ref{link}, for given $N$, $L$, and $d$, we set $\delta$ to be sufficiently small to ensure
\[
d \cdot \omega_f(\delta) \le N^{-4}L^{-4}(\log_2N)^{d-1}(\log_2L)^{d-1}.
\]

Then, there exists a $\sigma$-NN $k(\boldsymbol{x})$ with $3^d(64d(N+3)(\log_2(8N))^{d+1})$ width and $2d+(33L+2)(\log_2(4L))^{d+1}$ depth such that
\begin{align}
    \left\|k(\boldsymbol{x}) - f(\boldsymbol{x})\right\|_{L_\infty(\Omega)} \le CN^{-4}L^{-4}(\log_2N)^{d-1}(\log_2L)^{d-1},
\end{align}
where $C$ is a constant independent of $N$ and $L$, and polynomially dependent on the dimension $d$. Furthermore, we have
\begin{align}
    \left\|k(\boldsymbol{x}) - f(\boldsymbol{x})\right\|_{L_p(\Omega)} \le CN^{-4}L^{-4}(\log_2N)^{d-1}(\log_2L)^{d-1},
\end{align}
for any $p \in [1, \infty]$.
\end{proof}

The approximation rate in Theorem \ref{L2} is significantly superior to that in Corollary \ref{L2}. This error outperforms the results in \cite{mao2022approximation, montanelli2019new,blanchard2021shallow}. Furthermore, our result is nearly optimal based on the following theorem in the $X^{2,\infty}$ case. \begin{theorem}\label{Optimality}
			Given any $\rho, C_{1}, C_{2}, C_{3}, J_0>0$ and $n,d\in\sN^+$, there exist $N,L\in\sN$ with $NL\ge J_0$ and $f\in X^{2,\infty}$ with $|f|_{2,\infty}\le 1$, such that\begin{equation}
				\inf_{\phi\in\fK}\|\phi-f\|_{L^{\infty}(\Omega)}> C_{3}L^{-4-\rho}N^{-4-\rho},
			\end{equation} where \[\fK:=\{\text{$\sigma$-NNs in $\sR^d$ with the width $C_{1} N(\log_2 N)^{d+1}$ and depth $C_{2} L(\log_2 L)^{d+1}$}\}.\]
		\end{theorem}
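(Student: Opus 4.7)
The plan is to establish this lower bound by contradiction, comparing the shattering capacity of $\fK$ implied by the supposed approximation property against the Vapnik--Chervonenkis (VC) dimension upper bound for ReLU networks due to Bartlett--Harvey--Liaw--Mehrabian. Assume toward contradiction that for every admissible $N,L\in\sN_+$ with $NL\ge J_0$ and every $f\in X^{2,\infty}$ with $|f|_{2,\infty}\le 1$, some $\phi\in\fK$ satisfies $\|\phi-f\|_{L^\infty(\Omega)}\le C_3 L^{-4-\rho}N^{-4-\rho}$. The goal is to exhibit a point set that is shattered by the binary class $\{\mathrm{sign}\,\phi:\phi\in\fK\}$ yet is larger than the VC-dimension bound allows.

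To build the shattering family, I would fix once and for all a smooth bump $g\in C^\infty_c(-1,1)$ with $g(0)=1$ and $\|g''\|_{L^\infty}\le A$, and for a small scale $h>0$ choose $K=\lfloor(2h)^{-1}\rfloor^d$ interior grid centres $\boldsymbol{c}_i\in\Omega$ so that the cubes $\boldsymbol{c}_i+[-h,h]^d$ are pairwise disjoint and contained in $\Omega$. Setting $\varphi_i(\boldsymbol{x})=\prod_{j=1}^d g\!\left(\frac{x_j-c_{i,j}}{h}\right)$ and, for each sign pattern $\boldsymbol{\epsilon}\in\{\pm 1\}^K$, $f_{\boldsymbol{\epsilon}}=A^{-d}h^{2d}\sum_i\epsilon_i\varphi_i$, the disjointness of supports and the chain rule give $|f_{\boldsymbol{\epsilon}}|_{2,\infty}\le 1$ and $f_{\boldsymbol{\epsilon}}|_{\partial\Omega}=0$, while $f_{\boldsymbol{\epsilon}}(\boldsymbol{c}_i)=A^{-d}h^{2d}\epsilon_i$. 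Choosing $h$ so that $A^{-d}h^{2d}=3C_3L^{-4-\rho}N^{-4-\rho}$ yields $K\ge c_d(NL)^{(4+\rho)/2}$ for a constant $c_d$ depending only on $d,\rho,A,C_3$. Under the contradiction hypothesis each $f_{\boldsymbol{\epsilon}}$ admits $\phi_{\boldsymbol{\epsilon}}\in\fK$ with $\|\phi_{\boldsymbol{\epsilon}}-f_{\boldsymbol{\epsilon}}\|_{L^\infty(\Omega)}<\tfrac12 A^{-d}h^{2d}$, so $\mathrm{sign}\,\phi_{\boldsymbol{\epsilon}}(\boldsymbol{c}_i)=\epsilon_i$ for every $i$; every sign pattern on $\{\boldsymbol{c}_1,\dots,\boldsymbol{c}_K\}$ is therefore realised, forcing $\mathrm{VCdim}(\fK)\ge K$.

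On the other hand, $\fK$ consists of ReLU networks with width $\fO(N(\log N)^{d+1})$ and depth $\fO(L(\log L)^{d+1})$, so the parameter count is $W=\fO(N^2L(\log N)^{2(d+1)}(\log L)^{d+1})$, and the Bartlett--Harvey--Liaw--Mehrabian bound yields $\mathrm{VCdim}(\fK)=\fO(WL\log W)=\fO(N^2L^2(\log(NL))^{\kappa_d})$ for some $\kappa_d$ depending on $d$. Combining this with the lower bound $K\ge c_d(NL)^{2+\rho/2}$, the polylogarithmic prefactor is dominated by $(NL)^{\rho/2}$ once $NL$ is taken sufficiently large, which gives the contradiction. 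Because the conclusion only requires the existence of \emph{some} pair $(N,L)$ with $NL\ge J_0$ violating the rate, it suffices to enlarge $J_0$ (in terms of $C_1,C_2,C_3,d,\rho$) to guarantee this last inequality.

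The main obstacle I anticipate is the constant bookkeeping: one must simultaneously ensure that (i) the tensor-product bumps obey $|f_{\boldsymbol{\epsilon}}|_{2,\infty}\le 1$ (this is why the scaling factor $A^{-d}$ absorbs the $d$-fold product of $\|g''\|_{L^\infty}$ produced by the $\partial^{2d}/\partial x_1^2\cdots\partial x_d^2$ operator), (ii) the sign-recovery threshold $\tfrac12 A^{-d}h^{2d}$ strictly exceeds $C_3L^{-4-\rho}N^{-4-\rho}$ so that true shattering (not merely an $\varepsilon$-approximate version) occurs, and (iii) the polylogarithmic prefactor in the VC upper bound is strictly beaten by $(NL)^{\rho/2}$. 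The three conditions are compatible after possibly enlarging $J_0$, as the theorem statement permits; the remaining ingredients (smoothness of $g$, disjointness of supports, the Bartlett--Harvey--Liaw--Mehrabian theorem, and the direct definition of VC-shattering) are standard.
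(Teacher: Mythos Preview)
Your argument is correct. Both your proof and the paper's hinge on the Bartlett--Harvey--Liaw--Mehrabian VC-dimension upper bound for ReLU networks, but the way the lower bound is extracted differs. The paper first reduces to one dimension: it observes that freezing $x_2,\dots,x_d$ turns any $\phi\in\fK$ into a one-variable ReLU network of the same width and depth, then invokes a black-box lemma of Siegel (Lemma~\ref{popt} in the paper) which says that any translation-invariant class of VC-dimension $n$ cannot approximate $W^{2,\infty}([0,1])$ better than $n^{-2}$; the hard function $\tilde f(x_1)$ is then extended trivially to $\Omega$ by $f(\boldsymbol{x})=\tilde f(x_1)$. You instead carry out the shattering construction directly in $d$ dimensions with tensor-product bumps, essentially re-proving the needed instance of Siegel's lemma by hand.

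Your route is longer but more self-contained, and it has a genuine advantage: your bump functions $f_{\boldsymbol{\epsilon}}$ are compactly supported in the interior of $\Omega$, so the Korobov boundary condition $f|_{\partial\Omega}=0$ is automatic. The paper's extension $f(\boldsymbol{x})=\tilde f(x_1)$ does not obviously vanish on the faces $\{x_j=0\}$ or $\{x_j=1\}$ for $j\ge 2$, so strictly speaking an extra argument is needed there. One minor phrasing issue in your write-up: $J_0$ is given, not to be ``enlarged''; what you mean is that under the contradiction hypothesis you are free to choose $N,L$ with $NL\ge\max(J_0,M_0)$ for a threshold $M_0=M_0(C_1,C_2,C_3,d,\rho)$ large enough that $(NL)^{\rho/2}$ dominates the polylogarithmic factor. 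With that clarification the proof goes through.
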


  In order to prove this, we need a definition called Vapnik--Chervonenkis dimension (VC-dimension):
  \begin{definition}[VC-dimension \cite{abu1989vapnik}]
		Let $H$ denote a class of functions from $\fX$ to $\{0,1\}$. For any non-negative integer $m$, define the growth function of $H$ as \[\Pi_H(m):=\max_{x_1,x_2,\ldots,x_m\in \fX}\left|\{\left(h(x_1),h(x_2),\ldots,h(x_m)\right): h\in H \}\right|.\] The Vapnik--Chervonenkis dimension (VC-dimension) of $H$, denoted by $\text{VCdim}(H)$, is the largest $m$ such that $\Pi_H(m)=2^m$. For a class $\fG$ of real-valued functions, define $\text{VCdim}(\fG):=\text{VCdim}(\text{sgn}(\fG))$, where $\text{sgn}(\fG):=\{\text{sgn}(f):f\in\fG\}$ and $\text{sgn}(x)=1[x>0]$.
		
	\end{definition}

 \begin{lemma}[\cite{bartlett2019nearly}]\label{vcdim}
		For any $N,L,d\in\sN_+$, there exists a constant $\bar{C}$ independent with $N,L$ such that	\begin{equation}
		\text{VCdim}(\Phi)\le \bar{C} N^2L^2\log_2 L\log_2 N,\end{equation} $ \Phi:=\left\{\phi:\phi\text{ is a $\sigma$-NN in $\sR^d$ with width$\le N$ and depth$\le L$}\right\}$.
	\end{lemma}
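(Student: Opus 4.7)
The plan is to establish $\text{VCdim}(\Phi) \le \bar{C} N^2 L^2 \log_2 N \log_2 L$ by counting, for any fixed collection of inputs $x_1,\ldots,x_m$, the number of distinct classification patterns $(\text{sgn}\,\phi(x_1;\theta),\ldots,\text{sgn}\,\phi(x_m;\theta))$ that $\phi\in\Phi$ produces as its parameter vector $\theta\in\sR^W$ varies, where $W=O(N^2L)$ is the total number of trainable weights and biases. If this count is strictly less than $2^m$, then no set of $m$ inputs can be shattered, so $\text{VCdim}(\Phi)<m$; the stated bound follows by solving for the threshold value of $m$.

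The first step is a layer-by-layer analysis of how $\phi(x_j;\theta)$ depends polynomially on $\theta$. Holding the activation pattern of layers $1,\ldots,\ell-1$ fixed (so every preceding ReLU acts as either the identity or zero), the preactivation of each neuron in layer $\ell$ is a polynomial in $\theta$ of degree at most $\ell$, because each layer composes one additional affine map on the region where the prior ReLUs are linear. Within such a region, the activation pattern in layer $\ell$ is determined by the signs of at most $mN$ polynomials of degree $\le \ell$ in the $W_\ell=O(\ell N^2)$ parameters of layers $1,\ldots,\ell$.

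Next I invoke Warren's sign-pattern theorem: the number of distinct sign vectors of $s$ polynomials of degree $\le d$ in $k$ real variables is at most $(8eds/k)^{k}$ when $s\ge k$. Applying this to each layer successively and multiplying, together with one final factor for the sign of the output polynomial (of degree $\le L+1$), the total number of realizable classification patterns is bounded by
\begin{equation*}
\prod_{\ell=1}^{L+1}\left(\frac{c\,\ell\,mN}{W_\ell}\right)^{W_\ell},
\end{equation*}
with $c$ a universal constant. Using $\sum_\ell W_\ell = W = O(N^2L)$ and $\ell\le L$, a standard manipulation (carried out in \cite{bartlett2019nearly}) reduces the shattering condition to an inequality of the form $2^m\le (c'mL/W)^{c''WL}$, which forces $m=O(WL\log(WL))=O(N^2L^2\log(NL))\le \bar{C}\,N^2L^2\log_2 N\log_2 L$ for all $N,L\ge 2$.

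The main obstacle is the careful bookkeeping across layers: one must track simultaneously the polynomial degree (growing linearly with depth), the number of active parameters (growing layer by layer), and the number of sign constraints (proportional to $mN$), and then show that the product of per-layer Warren bounds simplifies to the stated clean expression without losing extra factors. This is exactly the refinement carried out in \cite{bartlett2019nearly} over earlier $O(W^2)$ and $O(W\log W)$ style bounds, and the present lemma is an immediate specialization of their main theorem to the width/depth parametrization with $W=O(N^2L)$, so the proof may simply invoke their result.
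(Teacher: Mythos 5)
Your proposal is correct and takes essentially the same route as the paper: the paper gives no internal proof of this lemma, simply importing it from \cite{bartlett2019nearly}, and your sketch faithfully reproduces that paper's layer-by-layer polynomial/sign-pattern (Warren) counting argument, then specializes the resulting $O(WL\log W)$ bound with $W=O(N^2L)$ to the stated width--depth form. The only caveat, which you already flag, is that passing from $\log(NL)$ to the product $\log_2 N\log_2 L$ needs $N,L\ge 2$ (the lemma's bound is vacuous as written when $N=1$ or $L=1$), a cosmetic issue inherited from the statement itself rather than a gap in your argument.
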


 \begin{lemma}[\cite{siegel2022optimal}]\label{popt}
     Let $ \Omega=[0,1]^d$ and suppose that $K$ is a translation invariant class of functions whose VC-dimension is at most $n$. By translation invariant we mean that $f \in K$ implies that $f(\cdot-v) \in K$ for any fixed vector $v \in \mathbb{R}^d$. Then there exists an $f \in W^{s,\infty}(\Omega)$ such that
$$
\inf _{g \in K}\|f-g\|_{L_p(\Omega)} \geq C(d, p) n^{-\frac{s}{d} }\|f\|_{ W^{s,\infty}(\Omega)} .
$$
 \end{lemma}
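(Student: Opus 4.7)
The plan is to deduce the lower bound by combining the VC-dimension estimate for ReLU networks (Lemma \ref{vcdim}) with the generic approximation lower bound for classes of bounded VC-dimension (Lemma \ref{popt}), after observing the inclusion $W^{2d,\infty}(\Omega) \subset X^{2,\infty}(\Omega)$.

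First I would bound the VC-dimension of $\fK$. Letting $\widetilde{N} := C_1 N(\log_2 N)^{d+1}$ and $\widetilde{L} := C_2 L(\log_2 L)^{d+1}$ denote the nominal width and depth, Lemma \ref{vcdim} gives $\text{VCdim}(\fK) \le \bar{C}\, \widetilde{N}^2 \widetilde{L}^2 \log_2 \widetilde{L} \log_2 \widetilde{N}$. Using $\log_2 \widetilde{N} \le C'\log_2 N$ and $\log_2 \widetilde{L} \le C'\log_2 L$ for large $N,L$ collapses this to
\[
\text{VCdim}(\fK) \le C\, N^2 L^2 (\log_2 N)^{2d+3}(\log_2 L)^{2d+3},
\]
with $C$ depending only on $d,C_1,C_2$.

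Next I would invoke Lemma \ref{popt}. The class $\fK$ is translation invariant, since shifting $\boldsymbol{x}\mapsto \boldsymbol{x}-v$ at the input level can be absorbed into the first-layer bias without changing the width or depth. Taking $s = 2d$ and $p = \infty$, Lemma \ref{popt} yields some $\tilde f \in W^{2d,\infty}(\Omega)$ such that
\[
\inf_{\phi\in\fK}\|\tilde f - \phi\|_{L^\infty(\Omega)} \ge C(d)\, [\text{VCdim}(\fK)]^{-2} \|\tilde f\|_{W^{2d,\infty}(\Omega)}.
\]
Inspecting the proof of Lemma \ref{popt}, the hard function is built from sign combinations of translates of a single bump, which may be placed strictly inside $(0,1)^d$, so $\tilde f|_{\partial \Omega}=0$ holds. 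Normalizing $f := \tilde f / \|\tilde f\|_{W^{2d,\infty}}$ gives $\|f\|_{W^{2d,\infty}}=1$, and since the multi-index $(2,\dots,2)$ satisfies $|\boldsymbol{k}|_\infty = 2$, we have $|f|_{2,\infty} = \|D^{(2,\ldots,2)}f\|_{L_\infty} \le \|f\|_{W^{2d,\infty}} = 1$. Hence $f \in X^{2,\infty}(\Omega)$ with $|f|_{2,\infty}\le 1$ and the required boundary condition.

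Combining the two estimates yields a lower bound of order $N^{-4}L^{-4}(\log_2 N)^{-4d-6}(\log_2 L)^{-4d-6}$. For any fixed $\rho>0$, the logarithmic factors satisfy $(\log_2 N)^{4d+6}\le N^\rho$ and $(\log_2 L)^{4d+6}\le L^\rho$ once $N,L$ are sufficiently large. Choosing $N,L$ large enough to simultaneously enforce $NL\ge J_0$ and absorb the constant $C_3/C(d)$ into the remaining slack between logarithms and powers produces the claimed strict inequality $\inf_{\phi\in\fK}\|f-\phi\|_{L^\infty(\Omega)} > C_3 L^{-4-\rho}N^{-4-\rho}$. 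The main obstacle is the second step: Lemma \ref{popt} does not by itself deliver a function vanishing on $\partial\Omega$ nor phrase its lower bound in the Korobov seminorm $|\cdot|_{2,\infty}$. Both points are resolved by the localization observation for the bump construction and the elementary seminorm inequality $|f|_{2,\infty}\le \|f\|_{W^{2d,\infty}}$; the remainder is routine exponent bookkeeping.
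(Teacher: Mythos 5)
Your proposal does not prove the statement in question; it proves a different result by assuming the statement. The lemma to be established is the generic lower bound of \cite{siegel2022optimal}: for \emph{any} translation invariant class $K$ whose VC-dimension is at most $n$, there exists $f\in W^{s,\infty}(\Omega)$ with $\inf_{g\in K}\|f-g\|_{L_p(\Omega)}\ge C(d,p)\,n^{-s/d}\|f\|_{W^{s,\infty}(\Omega)}$. Your argument bounds the VC-dimension of the ReLU class $\fK$ via Lemma \ref{vcdim} and then says ``Next I would invoke Lemma \ref{popt}'' --- that is, it invokes exactly the lemma that was to be proved and uses it to rederive the paper's Theorem \ref{Optimality}. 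As a proof of Lemma \ref{popt} this is circular, and it contains none of the content the lemma actually requires; what you wrote is essentially the paper's proof of Theorem \ref{Optimality} (with the cosmetic difference that you work in $d$ dimensions with $s=2d$, whereas the paper reduces to one variable with $s=2$ and sets $f(\boldsymbol{x})=\tilde f(x_1)$), which is not the target statement. The side remarks about the boundary condition and the inequality $|f|_{2,\infty}\le\|f\|_{W^{2d,\infty}}$ are likewise addressed to Theorem \ref{Optimality}, not to the lemma.

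A genuine proof of Lemma \ref{popt} must argue directly from the VC-dimension hypothesis. The standard route is: place on a regular grid of spacing $h\sim n^{-1/d}$ roughly a constant multiple of $n$ disjoint bumps $\phi_i$, scaled so that every signed combination $f_\varepsilon=\sum_i\varepsilon_i\phi_i$, $\varepsilon_i\in\{\pm 1\}$, satisfies $\|f_\varepsilon\|_{W^{s,\infty}(\Omega)}\le 1$, which forces the bump height to be of order $h^s\sim n^{-s/d}$. If every $f_\varepsilon$ could be approximated in $L_p$ to within a sufficiently small multiple of $n^{-s/d}$, then thresholding elements of $K$ would reproduce all $2^{\#\text{bumps}}$ sign patterns; since the $L_p$ bound only controls behavior up to sets of small measure, one needs the translation invariance of $K$ (via an averaging or random-shift argument over translates of the grid) to convert this into pointwise shattering of a fixed set of more than $n$ points, contradicting the Sauer--Shelah bound for a class of VC-dimension at most $n$. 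None of this machinery appears in your proposal, so the key idea of the lemma is missing.
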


  \begin{proof}[Proof of Theorem \ref{Optimality}]
      Define \[\tilde{\mathcal{K}}:=\{\text{$\sigma$-NNs in } \mathbb{R} \text{ with the width } C_{1} N(\log_2 N)^{d+1} \text{ and depth } C_{2} L(\log_2 L)^{d+1}\}.\] Due to \cite{bartlett2019nearly}, we know that
\[\text{VCdim}(\tilde{\mathcal{K}})= C N^2L^2(\log_2 L)^{2d+3}(\log_2 N)^{2d+3}.\]
Based on Lemma \ref{popt}, there exists a $\tilde{f} \in W^{2,\infty}([0,1])$ with $\|\tilde{f}\|_{W^{2,\infty}}\le 1$ such that
\[
\inf _{g \in K}\|\tilde{f}-g\|_{L_p([0,1])} \geq C(d, p) n^{-\frac{2}{d} }\|\tilde{f}\|_{ W^{2,\infty}([0,1])}.
\]
Now we can define $f(\mathbf{x})=\tilde{f}(x_1)$ which belongs to $X^{2,\infty}$. Then we know that for any $\rho>0$, there is an $f\in W^{2d,\infty}(\Omega)\subset X^{2,\infty}(\Omega)$ with $|f|_{2,\infty}\le 1$ and $C_3>0$ such that 
\begin{align}
    &\inf _{\phi \in \mathcal{K}}\|f-\phi\|_{L_\infty(\Omega)} \geq \inf _{\phi \in \mathcal{K}}\|\tilde{f}(x_1)-\phi(x_1,\ldots,x_d)\|_{L_\infty(\Omega)}\notag\\
    &\ge \inf _{\phi \in \tilde{\mathcal{K}}}\|\tilde{f}(x_1)-\phi(x_1)\|_{L_\infty([0,1])}\notag\\
    &\ge C(d, p) N^{-4}L^{-4}(\log_2 L)^{-4d-6}(\log_2 N)^{-4d-6} > C_{3}L^{-4-\rho}N^{-4-\rho}.
\end{align}
The second inequality is due to the fact that for any fixed $x_2,x_3,\ldots,x_d$, $\phi(x_1,\ldots,x_d)$ belongs to $\tilde{\mathcal{K}}$ with respect to $x_1$.
  \end{proof}

  \section{Super Convergence Rates for Korobov Functions in the $H^1$ norm}\label{hop}

  In this section, we will extend our analysis from Section \ref{LopKK} to the case of $H^1$ norms. This extension ensures that our DNNs can approximate functions in Korobov spaces with minimal discrepancies in both magnitude and derivative, achieving optimality and demonstrating the \textit{super-convergence} rate.

  \begin{theorem}\label{H1 K}
			For any $f\in X^{2,\infty}(\Omega)$ and $|f|_{2,\infty}\le 1$, $\|f\|_{W^{1,\infty}(\Omega)}\le 1$, $N, L\in\sN_+$, there is a $\sigma$-NN $k(\boldsymbol{x})$ with $2^{d+6}d(N+2)(\log_2(8N))^{d+1}$  width and $(47L+2)(\log_2(4L))^{d+1}$ depth such that\[\|f(\boldsymbol{x})-k(\boldsymbol{x})\|_{H^{1}([0,1]^d)}\le C{ N^{-2} L^{-2}}(\log_2N)^{d-1}(\log_2L)^{d-1},\]where $C$ is the constant independent with $N,L$.
		\end{theorem}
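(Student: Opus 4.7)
The plan is to parallel the construction and analysis of Theorem \ref{L2 K}, adapting it to control the stronger $H^1$ norm. The key tradeoff is that the approximation rate must drop from $N^{-4}L^{-4}$ to $N^{-2}L^{-2}$, which I achieve by taking $s=1$ (instead of $s=2$) in the bit-extraction step of Proposition \ref{point}. I will construct $k(\boldsymbol{x})$ as the sum over $|\boldsymbol{l}|_1 \le n+d-1$ of neural networks $\psi_{\boldsymbol{l}}(\boldsymbol{x})$ approximating $\sum_{\boldsymbol{i}\in \boldsymbol{i}_{\boldsymbol{l}}} v_{\boldsymbol{l},\boldsymbol{i}} \phi_{\boldsymbol{l},\boldsymbol{i}}(\boldsymbol{x}) = q_{\boldsymbol{l}}(\boldsymbol{x}) p_{\boldsymbol{l}}(\boldsymbol{x})$, reusing the building blocks from Proposition \ref{main1}: a step function plus bit-extraction network $s_{\boldsymbol{l}}(\boldsymbol{x}) \approx q_{\boldsymbol{l}}$, a sawtooth-product network $w_{\boldsymbol{l}}(\boldsymbol{x}) \approx p_{\boldsymbol{l}}$ from Proposition \ref{prop1}, and the multiplication gadget of Proposition \ref{2prop}. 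The network $k(\boldsymbol{x})$ is the composition/sum of these gadgets, and the width and depth accounting is as in the proof of Proposition \ref{main1} (with the expected slight increases from needing $W^{1,\infty}$-accurate products).

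\textbf{Error analysis on $\Omega_\delta$.} On the good region $\Omega_\delta$ I will derive a $W^{1,\infty}$ estimate of the form $\|\psi_{\boldsymbol{l}} - q_{\boldsymbol{l}} p_{\boldsymbol{l}}\|_{W^{1,\infty}(\Omega_\delta)} \lesssim 2^{|\boldsymbol{l}|_\infty - |\boldsymbol{l}|_1} N^{-2}L^{-2}$ using three facts: (i) by construction $s_{\boldsymbol{l}}$ is genuinely piecewise constant on $\Omega_\delta$, so $\nabla s_{\boldsymbol{l}} \equiv 0$ there; (ii) $w_{\boldsymbol{l}}$ approximates $p_{\boldsymbol{l}}$ in $W^{1,\infty}$ with a gradient error that inherits a factor $\max_r 2^{l_r}$ through Lemma \ref{composition}; and (iii) the multiplication gadget of Proposition \ref{2prop} gives a $W^{1,\infty}$-accurate product. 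Combining these by the product rule, using $|v_{\boldsymbol{l},\boldsymbol{i}}| \le 2^{-d-|\boldsymbol{l}|_1}$ from Lemma \ref{boundconstant}, summing over the $O(n^{d-1})$ multi-indices with $|\boldsymbol{l}|_1 \le n+d-1$, and invoking the $H^1$ branch of Lemma \ref{err-first} will yield the target $H^1(\Omega_\delta)$ bound $C N^{-2}L^{-2}(\log_2 N)^{d-1}(\log_2 L)^{d-1}$ after choosing $n = 2\log_2(NL) + O(1)$.

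\textbf{Main obstacle: extension to all of $\Omega$.} The principal difficulty, and the main departure from Section \ref{LopKK}, is that Lemma \ref{link} is not available in the $H^1$ setting: it controls only modulus of continuity, not derivatives. On the transition slabs that constitute $\Omega \setminus \Omega_\delta$, the step component $s_{\boldsymbol{l}}$ must jump by an $O(1)$ amount across a region of width $\delta$, so naively $|\nabla s_{\boldsymbol{l}}| = O(1/\delta)$, which is catastrophic as $\delta \to 0$. The saving structural observation is that these transition slabs lie exactly on the cell boundaries where $p_{\boldsymbol{l}}$ vanishes: in a $\delta$-slab in coordinate $r$ we have $|p_{\boldsymbol{l}}(\boldsymbol{x})| \le \delta \cdot 2^{l_r}$, hence $|w_{\boldsymbol{l}}(\boldsymbol{x})| \lesssim \delta \cdot 2^{|\boldsymbol{l}|_\infty}$ up to lower-order error. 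The product rule then gives
\[ |\nabla(s_{\boldsymbol{l}} w_{\boldsymbol{l}})| \le |w_{\boldsymbol{l}}|\,|\nabla s_{\boldsymbol{l}}| + |s_{\boldsymbol{l}}|\,|\nabla w_{\boldsymbol{l}}| \lesssim (\delta \cdot 2^{|\boldsymbol{l}|_\infty}) \cdot \delta^{-1} + 2^{|\boldsymbol{l}|_\infty} \lesssim 2^{|\boldsymbol{l}|_\infty}, \]
an $L^\infty$ bound on the gradient error that is independent of $\delta$. Combined with the measure estimate $|\Omega \setminus \Omega_\delta| \lesssim d \delta \cdot 2^n n^{d-1}$ and a polynomially small choice $\delta = (NL)^{-C}$, the $H^1(\Omega \setminus \Omega_\delta)$ contribution is driven below $N^{-2}L^{-2}(\log_2 N)^{d-1}(\log_2 L)^{d-1}$.

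\textbf{Conclusion.} Putting the $\Omega_\delta$ bound and the $\Omega \setminus \Omega_\delta$ bound together via the triangle inequality, and collating the widths and depths of the component networks through Proposition \ref{sum} (the main source of the extra $2^d$ factor in width and the extra $14L$-type term in depth relative to Theorem \ref{L2 K} is the $W^{1,\infty}$-accurate multiplication and the slightly refined gradient handling), I expect to obtain a $\sigma$-NN $k(\boldsymbol{x})$ of the advertised width $2^{d+6} d(N+2)(\log_2(8N))^{d+1}$ and depth $(47L+2)(\log_2(4L))^{d+1}$ satisfying the claimed $H^1$ rate. The crucial conceptual point that I would emphasize in the write-up is the cancellation between the $O(1/\delta)$ blow-up of $\nabla s_{\boldsymbol{l}}$ and the $O(\delta)$ vanishing of $p_{\boldsymbol{l}}$ near cell boundaries; this is what replaces the black-box application of Lemma \ref{link} used in Theorem \ref{L2 K}.
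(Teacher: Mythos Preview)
Your proposal takes a genuinely different route from the paper and contains a real gap. The paper does \emph{not} attempt to control $\nabla s_{\boldsymbol{l}}$ on the transition slabs at all. Instead it builds a partition of unity $\{g_{\boldsymbol{m}}\}_{\boldsymbol{m}\in\{1,2\}^d}$ (Definitions \ref{omega}--\ref{gm}) whose neural approximants $\phi_{\boldsymbol{m}}$ satisfy a \emph{zero--to--zero} property: both $\phi_{\boldsymbol{m}}$ and $\nabla\phi_{\boldsymbol{m}}$ vanish outside $\Omega_{\boldsymbol{m}}$, and the multiplication gadget obeys $\hat\phi(0,y)=\partial_y\hat\phi(0,y)=0$. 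By Lemma \ref{reduce}, $\hat\phi(\phi_{\boldsymbol{m}},\tilde k_{\boldsymbol{m}})$ and its gradient then vanish identically on $\Omega\setminus\Omega_{\boldsymbol{m}}$, \emph{regardless of how badly $\tilde k_{\boldsymbol{m}}$ behaves there}. So the global $H^1$ bound reduces to the local bounds of Proposition \ref{main2} on each $\Omega_{\boldsymbol{m}}$ (where the step functions are genuinely constant), summed over the $2^d$ pieces; this is also where the $2^d$ width factor actually comes from, not from ``$W^{1,\infty}$--accurate multiplication'' as you suggest.

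The gap in your argument is the assertion $|\nabla s_{\boldsymbol{l}}|=O(1/\delta)$ on the transition slabs. Neither Proposition \ref{step} nor Proposition \ref{point} gives any Lipschitz control: Proposition \ref{step} specifies $\phi_r$ only on the plateaus, and Proposition \ref{point} bounds $\tilde\phi_{\boldsymbol\alpha}$ only in $L_\infty$ and at integer inputs. Since $s_{\boldsymbol{l}}=\phi_{\boldsymbol\alpha}\circ\boldsymbol\phi_2$ and the bit-extraction network $\tilde\phi_{\boldsymbol\alpha}$ is built by the Bartlett--Lu technique (which encodes $N^2L^2$ values in $O(L\log L)$ depth), its Lipschitz constant is generically of order $N^{O(L)}$ or worse, not $O(1)$. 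Your cancellation $|w_{\boldsymbol{l}}|\cdot|\nabla s_{\boldsymbol{l}}|\lesssim \delta\cdot 2^{|\boldsymbol{l}|_\infty}\cdot \delta^{-1}$ therefore fails: the extra Lipschitz factor of $\tilde\phi_{\boldsymbol\alpha}$ survives, and with it the $\partial_1\hat\phi$ error term $O(N^{-L})\cdot|\nabla s_{\boldsymbol{l}}|$ is likewise uncontrolled. Making this route work would require proving explicit (and sufficiently mild) Lipschitz bounds for both the step and bit-extraction constructions, which is substantial extra work not in the paper. The partition-of-unity device is precisely engineered to avoid ever needing such bounds.
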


  First of all, define a sequence of subsets of $\Omega$:
		\begin{definition}\label{omega}Given $K,d\in\sN^+$, and for any $\boldsymbol{m}=(m_1,m_2,\ldots,m_d)\in\{1,2\}^d$, we define $
				\Omega_{\boldsymbol{m}}:=\prod_{j=1}^d\Omega_{m_j},
		$  where $
			\Omega_1:=\bigcup_{i=0}^{K-1}\left[\frac{i}{K},\frac{i}{K}+\frac{3}{4K}\right],~\Omega_2:=\bigcup_{i=0}^{K}\left[\frac{i}{K}-\frac{1}{2K},\frac{i}{K}+\frac{1}{4K}\right]\cap [0,1]$.\end{definition}

  Note that $\Omega_\textbf{1}=\Omega_\delta$ when $K=\frac{1}{2^n}$, where $n$ and $\Omega_\delta$ are defined in Definition \ref{delta}.

        Then we define a partition of unity $\{g_{\boldsymbol{m}}\}_{\boldsymbol{m}\in\{1,2\}^d}$ on $[0,1]^d$ with $\text{supp }g_{\boldsymbol{m}}\cap[0,1]^d\subset \Omega_{\boldsymbol{m}}$ for each $\boldsymbol{m}\in\{1,2\}^d$: \begin{definition}\label{gm}
			Given $K,d\in\sN_+$, we define\begin{align}
				g_1(x):=
				\begin{cases}
					1,~&x\in \left[\frac{i}{K}+\frac{1}{4K},\frac{i}{K}+\frac{1}{2K}\right] \\
					0,~&x\in\left[\frac{i}{K}+\frac{3}{4K},\frac{i+1}{K}\right] \\
					4K\left(x-\frac{i}{K}\right),~&x\in\left[\frac{i}{K},\frac{i}{K}+\frac{1}{4K}\right]\\
					-4K\left(x-\frac{i}{K}-\frac{3}{4K}\right),~&x\in\left[\frac{i}{K}+\frac{1}{2K},\frac{i}{K}+\frac{3}{4K}\right]
				\end{cases},~g_2(x):=g_1\left(x+\frac{1}{2K}\right),
			\end{align}for $i\in\sZ$. For any $\boldsymbol{m}=(m_1,m_2,\ldots,m_d)\in\{1,2\}^d$, define $
			g_{\boldsymbol{m}}(\boldsymbol{x})=\prod_{j=1}^d g_{m_j}(x_j),~\boldsymbol{x}=(x_1,x_2,\ldots,x_d)$.\end{definition}
		
		\begin{figure}[h!]
			\centering
			\includegraphics[scale=0.47]{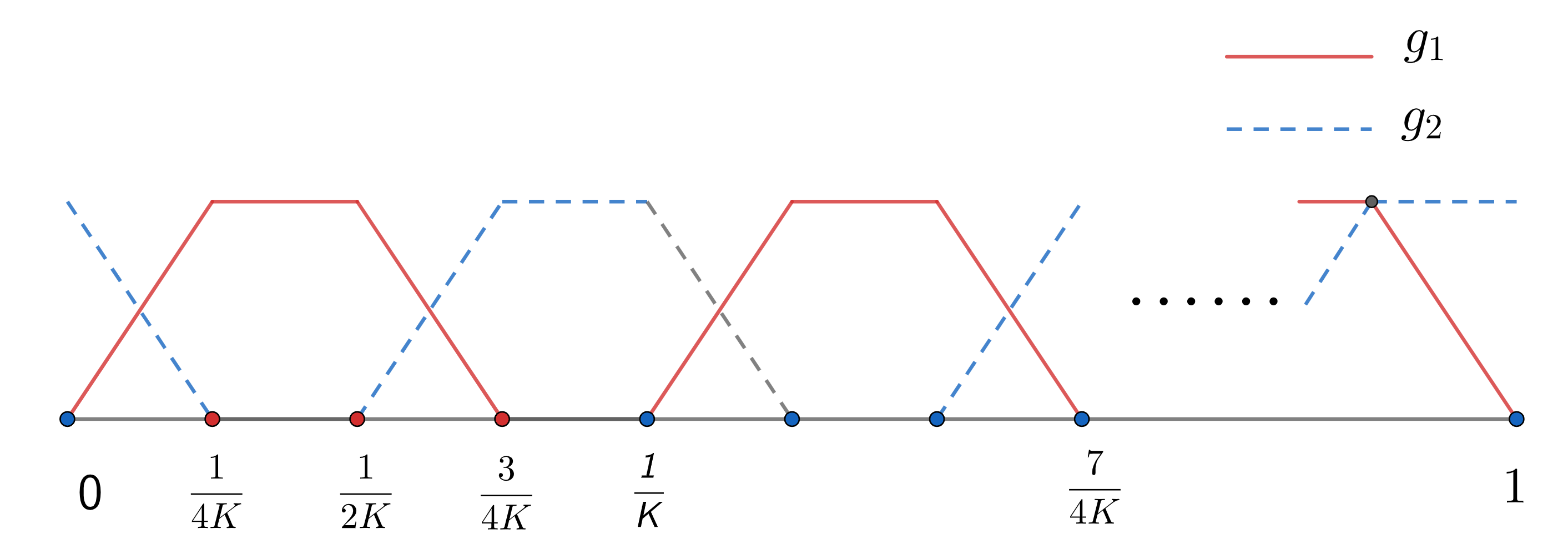}
			\caption{The schematic diagram of $g_i$ for $i=1,2$}
			\label{g12}
		\end{figure}
The following two lemmas and one proposition can be found in \cite{yang2023nearly}. For the readability in this paper, we present the proofs of these results here.

   \begin{lemma}\label{omegalem}For $\{\Omega_{\boldsymbol{m}}\}_{\boldsymbol{m}\in\{1,2\}^d}$ defined in Definition \ref{omega}, we have \[\bigcup_{\boldsymbol{m}\in\{1,2\}^d}\Omega_{\boldsymbol{m}}=[0,1]^d.\]\end{lemma}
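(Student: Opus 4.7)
The plan is to reduce the $d$-dimensional statement to the one-dimensional case and then exploit the product structure. First I would verify that $\Omega_1 \cup \Omega_2 = [0,1]$ as subsets of $\mathbb{R}$. Given $x \in [0,1]$, there exists a unique $i \in \{0, 1, \ldots, K-1\}$ with $x \in [i/K, (i+1)/K]$ (boundary points can be assigned either way). Decomposing this sub-interval by quarters, I would show that the first three quarters $[i/K, i/K + 3/4K]$ lie in $\Omega_1$ by construction, while the second half $[i/K + 1/2K, (i+1)/K]$ lies in $\Omega_2$ since it equals $[(i+1)/K - 1/2K, (i+1)/K]$, which is a subset of the interval $[(i+1)/K - 1/2K, (i+1)/K + 1/4K] \cap [0,1]$ appearing in the definition of $\Omega_2$. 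Together these two pieces cover the whole sub-interval $[i/K, (i+1)/K]$.

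Next I would lift this to $d$ dimensions. For an arbitrary $\boldsymbol{x} = (x_1, \ldots, x_d) \in [0,1]^d$, the one-dimensional result gives, for each coordinate $x_j$, an index $m_j \in \{1,2\}$ such that $x_j \in \Omega_{m_j}$. Setting $\boldsymbol{m} = (m_1, \ldots, m_d) \in \{1,2\}^d$, I obtain $\boldsymbol{x} \in \prod_{j=1}^d \Omega_{m_j} = \Omega_{\boldsymbol{m}}$, hence $\boldsymbol{x} \in \bigcup_{\boldsymbol{m}\in\{1,2\}^d} \Omega_{\boldsymbol{m}}$. The reverse inclusion is immediate since each $\Omega_{\boldsymbol{m}} \subset [0,1]^d$.

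There is no serious obstacle here; the lemma is essentially a bookkeeping exercise verifying that the quarter-shift between $g_1$ and $g_2$ (equivalently, between $\Omega_1$ and $\Omega_2$) is calibrated precisely so that the $1/4K$-gaps left by $\Omega_1$ inside each cell $[i/K,(i+1)/K]$ are filled by the translated copies in $\Omega_2$. The only mild care needed is at the endpoints $i=0$ and $i=K$ in the definition of $\Omega_2$, where intersecting with $[0,1]$ trims the intervals but does not affect the covering property, since what gets trimmed is outside $[0,1]$ in any case.
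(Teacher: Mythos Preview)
Your proposal is correct and follows essentially the same approach as the paper: both reduce to the one-dimensional covering $\Omega_1\cup\Omega_2=[0,1]$ and then exploit the product structure of $\Omega_{\boldsymbol{m}}=\prod_{j=1}^d\Omega_{m_j}$. The only cosmetic difference is that the paper phrases the lift to $d$ dimensions as an induction on $d$, whereas you argue directly coordinate-by-coordinate; for a Cartesian-product statement like this the two are interchangeable.
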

		
		\begin{proof}
			We prove this lemma via induction. $d=1$ is valid due to $\Omega_1\cup\Omega_2=[0,1]$. Assume that the lemma is true for $d-1$, then \begin{align}
				\bigcup_{\boldsymbol{m}\in\{1,2\}^d}\Omega_{\boldsymbol{m}}=&[0,1]^d=\bigcup_{\boldsymbol{m}\in\{1,2\}^{d-1}}\Omega_{\boldsymbol{m}}\times \Omega_1+\bigcup_{\boldsymbol{m}\in\{1,2\}^{d-1}}\Omega_{\boldsymbol{m}}\times \Omega_2\notag\\=&\left([0,1]^{d-1}\times \Omega_1\right)\bigcup\left([0,1]^{d-1}\times \Omega_2\right)=[0,1]^d,
			\end{align} hence the case of $d$ is valid, and we finish the proof of the lemma.
		\end{proof}
		
		\begin{lemma}\label{prog}
			$\{g_{\boldsymbol{m}}\}_{\boldsymbol{m}\in\{1,2\}^d}$ defined in Definition \ref{gm} satisfies:
			
			(i): $\sum_{\boldsymbol{m}\in\{1,2\}^d}g_{\boldsymbol{m}}(\boldsymbol{x})=1$ for every $x\in[0,1]^d$.
			
			(ii): ${\rm supp}~g_{\boldsymbol{m}}\cap[0,1]^d\subset\Omega_{\boldsymbol{m}}$, where $\Omega_{\boldsymbol{m}}$ is defined in Definition \ref{omega}.
			
			(ii): For any $\boldsymbol{m}=(m_1,m_2,\ldots,m_d)\in\{1,2\}^d$ and $\boldsymbol{x}=(x_1,x_2,\ldots,x_d)\in[0,1]^d\backslash\Omega_{\boldsymbol{m}}$, there exists $j$ such as $g_{m_j}(x_j)=0$ and $\frac{\mathrm{d} g_{m_j}(x_j)}{\mathrm{d} x_j}=0$.
		\end{lemma}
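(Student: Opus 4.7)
The plan is to reduce all three claims to one-dimensional statements about $g_1$ and $g_2$, exploiting the product structure $g_{\boldsymbol{m}}(\boldsymbol{x})=\prod_{j=1}^d g_{m_j}(x_j)$.

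For claim (i), I would factor the sum of products as a product of sums:
\begin{equation}
\sum_{\boldsymbol{m}\in\{1,2\}^d} g_{\boldsymbol{m}}(\boldsymbol{x}) = \prod_{j=1}^d \bigl(g_1(x_j)+g_2(x_j)\bigr),
\end{equation}
so the assertion reduces to verifying the one-dimensional identity $g_1(x)+g_2(x)=1$ on $[0,1]$. Since $g_2$ is the shift of $g_1$ by $1/(2K)$, I would carry out a direct case-check on the four linear pieces of $g_1$ inside each period $[i/K,(i+1)/K]$: on the two plateau pieces the pair $(g_1,g_2)$ equals $(1,0)$ or $(0,1)$, while on the two ramp pieces $g_1$ and $g_2$ are affine with opposite slopes $\pm 4K$ whose values sum identically to $1$.

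For claim (ii), the support of a tensor product is the product of the supports, so it suffices to show ${\rm supp}(g_{m_j})\cap[0,1]\subset\Omega_{m_j}$ for each $m_j\in\{1,2\}$. By the definition in Definition~\ref{gm}, $g_1$ vanishes identically on each gap $\bigl(i/K+3/(4K),(i+1)/K\bigr)$, whose complement in $[0,1]$ is exactly $\Omega_1$; the analogous statement for $g_2$ follows by translating by $1/(2K)$ and intersecting with $[0,1]$ to account for the boundary indices $i=0$ and $i=K$ appearing in $\Omega_2$. For claim (iii), given $\boldsymbol{x}\in[0,1]^d\setminus\Omega_{\boldsymbol{m}}$, since $\Omega_{\boldsymbol{m}}=\prod_j\Omega_{m_j}$ there must exist some index $j$ with $x_j\in[0,1]\setminus\Omega_{m_j}$. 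Because $\Omega_{m_j}$ is a finite union of closed intervals, its complement in $[0,1]$ is an open set on which $g_{m_j}$ is identically zero by the same gap analysis from (ii); consequently both $g_{m_j}(x_j)=0$ and $g_{m_j}'(x_j)=0$ at such a point.

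There is no real analytical difficulty here; the main obstacle is bookkeeping. I need to be careful with the boundary indices $i=0$ and $i=K$ (since $\Omega_2$ involves the clipping $\cap[0,1]$), and to confirm that the complement of $\Omega_{m_j}$ in $[0,1]$ is genuinely open so that the vanishing-derivative claim in (iii) is unambiguous. Beyond this verification, the proof is a direct unwinding of the piecewise-linear definitions of $g_1$ and $g_2$.
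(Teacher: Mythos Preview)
Your proposal is correct and matches the paper's own argument: the paper also reduces everything to the one-dimensional functions $g_1,g_2$, handling (i) by an induction on $d$ that is exactly the coordinate-wise factorization $\sum_{\boldsymbol{m}}g_{\boldsymbol{m}}=\prod_j(g_1(x_j)+g_2(x_j))$ you wrote down, and treating (ii)--(iii) by picking a coordinate $j$ with $x_j\notin\Omega_{m_j}$ and using that $g_{m_j}$ is identically zero (hence has zero value and derivative) there. The only cosmetic difference is that the paper states (i) as an induction in the style of Lemma~\ref{omegalem} rather than writing the factored product explicitly.
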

		
		\begin{proof}
			(i) can be proved via induction as Lemma \ref{omegalem}, and we leave it to readers.
			
			As for (ii) and (iii), without loss of generality, we show the proof for $\boldsymbol{m}_*:=(1,1,\ldots,1)$. For any $\boldsymbol{x}\in [0,1]^d\backslash\Omega_{\boldsymbol{m}_*}$, there is $x_j\in [0,1]\backslash\Omega_1$. Then $g_1(x_j)=0$ and $g_{\boldsymbol{m}_*}(\boldsymbol{x})=\prod_{j=1}^d g_{1}(x_j)=0$, therefore ${\rm supp}~g_{\boldsymbol{m}_*}\cap[0,1]^d\subset\Omega_{\boldsymbol{m}_*}$. Furthermore, $\frac{\mathrm{d} g_{m_j}(x_j)}{\mathrm{d} x_j}=0$ for $x_j\in [0,1]\in\Omega_1$ due to the definition of $g_1$ (Definition \ref{gm}), then we finish this proof.
		\end{proof}

	Then we use the following proposition to approximate $\{g_{\boldsymbol{m}}\}_{\boldsymbol{m}\in\{1,2\}^d}$ by $\sigma$-NNs and construct a sequence of $\sigma$-NNs $\{\phi_{\boldsymbol{m}}\}_{\boldsymbol{m}\in\{1,2\}^d}$:
		
		\begin{proposition}\label{peri}
			Given any $N,L,n\in\sN_+$ for $K= N^2L^2$, then for any \[\boldsymbol{m}=(m_1,m_2,\ldots,m_d)\in\{1,2\}^d,\] there is a $\sigma$-NN with the width smaller than $(9+d)(N+1)+d-1$ and depth smaller than $15 d(d-1)n L$ such as \[\|\phi_{\boldsymbol{m}}(\boldsymbol{x})-g_{\boldsymbol{m}}(\boldsymbol{x})\|_{W^{1,\infty}([0,1]^d)}\le50 d^{\frac{5}{2}}(N+1)^{-4dnL}.\]
		\end{proposition}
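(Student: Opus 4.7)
The plan is to approximate $g_{\boldsymbol{m}}(\boldsymbol{x}) = \prod_{j=1}^d g_{m_j}(x_j)$ as a neural-network product of $d$ one-dimensional factors, each of which individually approximates the periodic hat function $g_{m_j}(x_j)$, and then to control the composite error in $W^{1,\infty}$ using Lemma \ref{composition} together with Proposition \ref{prop1}.

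First I would construct, for each $j \in \{1,\ldots,d\}$, a $\sigma$-NN $\tilde{g}_{m_j}(x_j)$ that approximates the periodic piecewise-linear hat function $g_{m_j}(x_j)$, which has $K = N^2L^2$ teeth on $[0,1]$. Following the same iterated-sawtooth idea used in the proof of Proposition \ref{main1} (with its $\psi_1,\ldots,\psi_4$), this is built by composing a small sawtooth of $\fO(N)$ teeth with successively coarser periodic sawtooths, giving a 1D network of width $N+1$ and a moderate depth whose $W^{1,\infty}$-distance to $g_{m_j}$ is at most $(N+1)^{-\beta nL}$ for a large $\beta$ to be chosen later; the construction preserves $\tilde{g}_{m_j}=0$ wherever $g_{m_j}=0$. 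Next, I would apply Proposition \ref{prop1} with $s = d$ and with depth parameter $nL$ in place of $L$, producing a $\sigma$-NN $\phi_{\mathrm{prod}}$ of width $9(N+1)+d-1$ and depth $14d(d-1)nL$ satisfying $\|\phi_{\mathrm{prod}} - y_1\cdots y_d\|_{W^{1,\infty}([0,1]^d)} \le 10(d-1)(N+1)^{-7dnL}$. Setting $\phi_{\boldsymbol{m}}(\boldsymbol{x}) := \phi_{\mathrm{prod}}\bigl(\tilde{g}_{m_1}(x_1),\ldots,\tilde{g}_{m_d}(x_d)\bigr)$ and routing the $d$ 1D blocks in parallel yields a $\sigma$-NN of width at most $(9+d)(N+1)+d-1$ and depth at most $15d(d-1)nL$, as required.

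For the error in $W^{1,\infty}$, I would split
\begin{equation}
\phi_{\boldsymbol{m}} - g_{\boldsymbol{m}} = \bigl[\phi_{\mathrm{prod}}(\tilde{g}_{m_1},\ldots,\tilde{g}_{m_d}) - \tilde{g}_{m_1}\cdots \tilde{g}_{m_d}\bigr] + \bigl[\tilde{g}_{m_1}\cdots\tilde{g}_{m_d} - g_{m_1}\cdots g_{m_d}\bigr].
\end{equation}
The first bracket is handled by Lemma \ref{composition} applied together with the product-approximation bound, picking up a factor of $\max_j\|\tilde{g}_{m_j}\|_{W^{1,\infty}} = \fO(K) = \fO(N^2L^2)$. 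The second bracket is telescoped: $\tilde g_{m_1}\cdots\tilde g_{m_d} - g_{m_1}\cdots g_{m_d} = \sum_{k=1}^d \bigl(\prod_{j<k}\tilde g_{m_j}\bigr)(\tilde g_{m_k} - g_{m_k})\bigl(\prod_{j>k}g_{m_j}\bigr)$, each summand bounded in $W^{1,\infty}$ by the 1D error from the previous step times a $K^{d-1}$ prefactor coming from the Lipschitz constants of the remaining factors. Choosing $\beta$ and $s = d$ large enough forces both brackets below $50d^{5/2}(N+1)^{-4dnL}$.

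The main obstacle is the Lipschitz blow-up $\|g_{m_j}\|_{W^{1,\infty}} \sim K = N^2L^2$ of the 1D factors, which forces the product approximation to be so accurate that a factor of $K^d = (N+1)^{\fO(d)}$ can be absorbed before any useful $W^{1,\infty}$ bound on $\phi_{\boldsymbol{m}} - g_{\boldsymbol{m}}$ emerges. This is precisely why the depth parameter inside Proposition \ref{prop1} must scale like $nL$ rather than $L$: the resulting approximation error $(N+1)^{-7dnL}$ then has enough margin above the target $(N+1)^{-4dnL}$ to swallow the polynomial-in-$K$ prefactors arising from both the composition lemma and the telescoping identity.
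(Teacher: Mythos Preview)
Your overall strategy---compose the product network of Proposition~\ref{prop1} with $d$ one-dimensional sawtooth factors and control the composite error via Lemma~\ref{composition}---is exactly the paper's approach, and your identification of the Lipschitz blow-up $|g_{m_j}|_{W^{1,\infty}}\sim 4K=4N^2L^2$ as the reason for taking depth $nL$ in $\phi_{\mathrm{prod}}$ is also correct.

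The one place you make your life harder than necessary is in treating the $\tilde g_{m_j}$ as \emph{approximations} to $g_{m_j}$. Each $g_{m_j}$ is piecewise linear, so it is \emph{exactly} a ReLU network: the paper builds a single periodic sawtooth $\psi=\psi_1\circ\psi_2\circ\psi_3\circ\psi_4$ of width $4N$ and depth $2L+4$, and then $g_1(x)=\psi\!\left(x+\tfrac{1}{8K}\right)$, $g_2(x)=\psi\!\left(x+\tfrac{5}{8K}\right)$ on the nose. Consequently your second bracket $\tilde g_{m_1}\cdots\tilde g_{m_d}-g_{m_1}\cdots g_{m_d}$ is identically zero and the telescoping argument (together with the ``choose $\beta$ large enough'' step and the $K^{d-1}$ prefactor) disappears. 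The entire error then reduces to a single application of Lemma~\ref{composition} to $(\phi_{\mathrm{prod}}-y_1\cdots y_d)\circ(g_{m_1},\ldots,g_{m_d})$, giving
\[
\|\phi_{\boldsymbol m}-g_{\boldsymbol m}\|_{W^{1,\infty}}\le d^{3/2}\cdot 10(d-1)(N+1)^{-7dnL}\cdot(1+4N^2L^2)\le 50d^{5/2}(N+1)^{-4dnL},
\]
which is the stated bound. As a bonus, the exact representation automatically gives $\tilde g_{m_j}=0$ and $\tilde g_{m_j}'=0$ wherever $g_{m_j}=0$, which is needed later in Lemma~\ref{reduce}; with a genuine approximation that property would require separate justification.
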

  \begin{proof}
			The proof is similar to that of \cite[Proposition 1]{yang2023nearly}. For readability, we provide the proof here. First, we construct $g_1$ and $g_2$ by neural networks in $[0,1]$. We first construct a $\sigma$-NN in the small set $\left[0,NL^2\right]$. It is easy to check there is a neural network $\hat{\psi}$ with the width $4$ and one layer such as \begin{equation}
				\hat{\psi}(x):=
				\begin{cases}
					1,~&x\in \left[\frac{1}{8K},\frac{3}{8K}\right]  \\
					4K\left(x-\frac{1}{8K}\right),~&x\in\left[\frac{3}{8K},\frac{5}{8K}\right]\\
					-4K\left(x-\frac{7}{8K}\right),~&x\in\left[\frac{5}{8K},\frac{7}{8K}\right]\\
					0,~&\text{Otherwise}.
				\end{cases}
			\end{equation}
			
			Hence, we have a network $\psi_1$ with the width $4N$ and one layer such as \[\psi_1(x):=\sum_{i=0}^{N-1}\hat{\psi}\left(x-\frac{i}{K}\right).\]
			
			Next, we construct $\psi_i$ for $i=2,3,4$ based on the symmetry and periodicity of $g_i$. $\psi_2$ is the function with period $\frac{2}{NL^2}$ in $\left[0,\frac{1}{L^2}\right]$, and each period is a hat function with gradient 1. $\psi_3$ is the function with period $\frac{2}{L^2}$ in $\left[0,\frac{1}{L}\right]$, and each period is a hat function with gradient 1. $\psi_4$ is the function with period ${\frac{2}{L}}$ in $\left[0,1\right]$, and each period is a hat function with gradient 1. 
   
			Note that $\psi_2\circ\psi_3\circ\psi_4(x)$ is the function with period $\frac{2}{NL^2}$ in $[0,1]$, and each period is a hat function with gradient 1. Then function $\psi_1\circ \psi_2\circ\psi_3\circ\psi_4(x)$ is obtained by repeating reflection $\psi_1$ in $\left[0,1\right]$, which is the function we want.
			
			Similar with $\psi_1$, $\psi_2$ is a network with $4N$ width and one layer. Due to Proposition \ref{sum}, we know that $\psi_3$ and $\psi_4$ is a network with $7$ width and $L+1$ depth. Hence \begin{equation}
				\psi(x):=\psi_1\circ \psi_2\circ\psi_3\circ\psi_4(x)\label{rep}
			\end{equation} is a network with $4N$ width and $2L+4$ depth and $g_1=\psi\left(x+\frac{1}{8K}\right)$ and $g_1=\psi\left(x+\frac{5}{8K}\right)$.
			
			Now we can construct $g_{\boldsymbol{m}}$ for $m\in\{1,2\}^d$ based on Proposition \ref{prop1}: There is a neural network $\phi_\text{prod}$ with the width $9(N+1)+d-1$ and depth $14 d(d-1)n L$ such that $\|\phi_\text{prod}\|_{\mathcal{W}^{1, \infty}([0,1]^d)} \leq 18$ and
			$$
			\left\|\phi_\text{prod}(\boldsymbol{x})-x_1 x_2 \cdots x_d\right\|_{\mathcal{W}^{1, \infty}([0,1]^d)} \leq 10(d-1)(N+1)^{-7 d nL}.
			$$
			
			Then denote $\phi_{\boldsymbol{m}}(\boldsymbol{x}):=\phi_\text{prod}(g_{m_1},g_{m_2},\ldots,g_{m_d})$ which is a neural network with the width smaller than $(9+d)(N+1)+d-1$ and depth smaller than $15 d(d-1)n L$. Furthermore, due to Lemma \ref{composition}, we have \begin{align}
				\|\phi_{\boldsymbol{m}}(\boldsymbol{x})-g_{\boldsymbol{m}}(\boldsymbol{x})\|_{\mathcal{W}^{1, \infty}([0,1]^d)}\le& d^{\frac{3}{2}}\left\|\phi_\text{prod}(\boldsymbol{x})-x_1 x_2 \cdots x_d\right\|_{L^{ \infty}([0,1]^d)}\notag\\&+d^{\frac{3}{2}}\left\|\phi_\text{prod}(\boldsymbol{x})-x_1 x_2 \cdots x_d\right\|_{\mathcal{W}^{1, \infty}([0,1]^d)}|\psi|_{W^{1,\infty}(0,1)}\notag\\\le&d^{\frac{3}{2}}10(d-1)(N+1)^{-7 nd L}\left(1+4N^2L^2\right)\notag\\\le&50 d^{\frac{5}{2}}(N+1)^{-4dnL},	\end{align}
			where the last inequality is due to \[\frac{N^2L^2}{(N+1)^{ 3d nL}}\le \frac{N^2L^2}{(N+1)^{ 3d nL}}\le \frac{L^2}{(N+1)^{ 3d nL-2}}\le\frac{L^2}{2^{ d nL}}\le1.\]
		\end{proof}

  \begin{proposition}\label{main2}
			For any $f\in X^{2,\infty}(\Omega)$ with $p\ge1$ and $|f|_{2,\infty}\le 1$, $N, L\in\sN_+$, there is a $\sigma$-NN $\tilde{k}_{\boldsymbol{m}}(\boldsymbol{x})$ for any $\boldsymbol{m}\in\{1,2\}^d$ with $64d(N+2)(\log_2(8N))^{d+1}$  width and $(33L+2)(\log_2(4L))^{d+1}$ depth, such that\begin{align}
        \left\|\tilde{k}_{\boldsymbol{m}}(\boldsymbol{x})-f(\boldsymbol{x})\right\|_{H^1(\Omega_{\boldsymbol{m}})}\le  CN^{-2}L^{-2}.
   \end{align} where $C$ is the constant independent with $N,L$, and polynomially dependent on the dimension $d$.
		\end{proposition}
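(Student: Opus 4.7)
The plan is to mirror the proof of Proposition \ref{main1} but upgrade every estimate from pointwise to Sobolev control; namely, to establish a $W^{1,\infty}(\Omega_{\boldsymbol{m}})$ bound, from which the $H^1(\Omega_{\boldsymbol{m}})$ bound follows immediately because $\Omega_{\boldsymbol{m}}$ has finite measure. As in Proposition \ref{main1}, for each level $|\boldsymbol{l}_*|_1\le n+d-1$ I would decompose the hierarchical piece $\sum_{\boldsymbol{i}\in\boldsymbol{i}_{\boldsymbol{l}_*}} v_{\boldsymbol{l}_*,\boldsymbol{i}}\phi_{\boldsymbol{l}_*,\boldsymbol{i}}(\boldsymbol{x}) = p_{\boldsymbol{l}_*}(\boldsymbol{x})q_{\boldsymbol{l}_*}(\boldsymbol{x})$ into a sawtooth tensor product $p_{\boldsymbol{l}_*}$ and a piecewise-constant coefficient field $q_{\boldsymbol{l}_*}$, approximate each factor by a subnetwork, combine them via the multiplication NN of Proposition \ref{2prop}, and collect the level contributions through Proposition \ref{sum}.

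The critical new step is to tailor the step NN $s^{\boldsymbol{m}}_{\boldsymbol{l}_*}$ approximating $q_{\boldsymbol{l}_*}$ so that its plateau regions cover all of $\Omega_{\boldsymbol{m}}$. Concretely, I would apply Proposition \ref{step} to the input after an $\boldsymbol{m}$-dependent coordinate-wise dyadic shift chosen so that every transition strip of width $\delta$ lands in the complement of $\Omega_{m_r}$ in each direction. Because $\Omega_{\boldsymbol{m}}$ is built on the dyadic grid and the level-$\boldsymbol{l}_*$ grid ($K_r=2^{l_r-1}$) refines the same hierarchy, a single pair of shifts per coordinate works simultaneously for all levels. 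The resulting $s^{\boldsymbol{m}}_{\boldsymbol{l}_*}$ is then genuinely locally constant on $\Omega_{\boldsymbol{m}}$, matching the piecewise-constant $q_{\boldsymbol{l}_*}$ there, so both have vanishing weak derivative on $\Omega_{\boldsymbol{m}}$. The bit-extraction encoding from Proposition \ref{point} then furnishes pointwise coefficient accuracy $\fO(N^{-2s}L^{-2s})$, which, combined with the zero-derivative property, lifts automatically to the same $W^{1,\infty}(\Omega_{\boldsymbol{m}})$ bound.

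For $p_{\boldsymbol{l}_*}$, the plan is to reuse the compositional sawtooth network $\psi_1\circ\psi_2\circ\psi_3\circ\psi_4$ from the proof of Proposition \ref{peri}, which reproduces each 1D factor $p_{l_k}$ exactly, and to combine coordinates through the product NN of Proposition \ref{prop1}; Lemma \ref{composition} then yields an exponentially small $W^{1,\infty}$ error for $w_{\boldsymbol{l}_*}-p_{\boldsymbol{l}_*}$. Composing $s^{\boldsymbol{m}}_{\boldsymbol{l}_*}$ and $w_{\boldsymbol{l}_*}$ with the multiplication NN of Proposition \ref{2prop} produces a $W^{1,\infty}(\Omega_{\boldsymbol{m}})$ estimate of order $\fO(N^{-2s}L^{-2s})$ per level. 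Summing the $\fO(n^{d-1})$ admissible levels through Proposition \ref{sum} and combining with the $H^1$ truncation estimate of Lemma \ref{err-first} gives a total error controlled by $C\bigl(M^{-1}|\log_2 M|^{d-1} + n^{d-1}N^{-2s}L^{-2s}\bigr)$. Choosing $n\approx 2\log_2(NL)$ so that $M\sim (NL)^2(\log_2 NL)^{d-1}$ collapses the truncation term to $\fO(N^{-2}L^{-2})$, and for $s\ge 2$ the second term is already smaller, yielding the claimed rate; the width and depth tallies follow by the same bookkeeping as in Proposition \ref{main1}.

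The main obstacle I anticipate is the alignment argument above: verifying rigorously that a single pair of $\boldsymbol{m}$-dependent shifts can simultaneously push the transition strips at every scale $2^{-l_r}$ into the coordinate complement of $\Omega_{m_r}$. This is a delicate but tractable piece of dyadic bookkeeping, made possible by the fact that $\Omega_{\boldsymbol{m}}$ and the hierarchical grids share a common binary structure. Once this alignment is verified, all remaining estimates are straightforward $W^{1,\infty}$ analogs of the $L_\infty$ bounds from the proof of Proposition \ref{main1}.
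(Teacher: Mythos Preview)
Your proposal is correct and follows essentially the same route as the paper: the paper's proof also reduces to the $W^{1,\infty}(\Omega_{\boldsymbol{m}})$ case, reuses the $p_{\boldsymbol{l}}\cdot q_{\boldsymbol{l}}$ decomposition from Proposition~\ref{main1}, observes that the piecewise-constant factor and its step-NN surrogate have vanishing weak derivative on $\Omega_{\boldsymbol{m}}$ so that the $L_\infty$ bit-extraction bound upgrades for free, invokes the $W^{1,\infty}$ product bound of Proposition~\ref{prop1} for the sawtooth factor, and finishes via the $H^1$ truncation estimate in Lemma~\ref{err-first}. Your write-up is in fact more explicit than the paper's on the one point the paper leaves implicit---the $\boldsymbol{m}$-dependent shift needed so that the transition strips of the step NN avoid $\Omega_{\boldsymbol{m}}$ for every level---which the paper dispatches with ``for other $\boldsymbol{m}$ the proof can be carried out in a similar way''; your dyadic alignment concern is the right thing to check there.
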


  \begin{proof}
      The proof is similar to that of Proposition \ref{main1}. We consider $\boldsymbol{m}=\textbf{1}$, i.e., $\Omega_{\boldsymbol{m}_*}=\Omega_\delta$. For other $\boldsymbol{m}\in\{1,2\}^d$, the proof can be carried out in a similar way. For any $|\boldsymbol{l}|\le n+d-1$, there exists a $\sigma$-NN $\psi_{\boldsymbol{l}}$ with $64d(N+1)\log_2(8N)$ width and $(33L+2)\log_2(4L)$ depth such that 
\begin{equation}
    \left\|\psi_{\boldsymbol{l}}(\boldsymbol{x})-\sum_{\boldsymbol{i} \in \boldsymbol{i}_{\boldsymbol{l}}} v_{\boldsymbol{l}, \boldsymbol{i}} \phi_{\boldsymbol{l}, \boldsymbol{i}}(\boldsymbol{x})\right\|_{W^{1,\infty}(\Omega_{\boldsymbol{m}_*})}\le (26+4C_{\alpha,\boldsymbol{l}_*})N^{-4}L^{-4}.
\end{equation} The proof follows a similar structure to that in Proposition \ref{main1}. This similarity arises from the fact that $\sum_{\boldsymbol{i} \in \boldsymbol{i}_{\boldsymbol{l}}} v_{\boldsymbol{l}, \boldsymbol{i}} \phi_{\boldsymbol{l}, \boldsymbol{i}}(\boldsymbol{x})=p_{\boldsymbol{l}}(\boldsymbol{x})q_{\boldsymbol{l}}(\boldsymbol{x})$ and $p_{\boldsymbol{l}}$ is a piece-wise constant function with a weak derivative always equal to zero. The approximation of $p_{\boldsymbol{l}}(\boldsymbol{x})$ has already been measured by the norm $W^{1,\infty}$ in Proposition \ref{main1}. Due to $W^{1,\infty}(\Omega)\subset H^1{1,\infty}(\Omega)$, we can have a $\sigma$-NN $\tilde{k}_{\boldsymbol{m}_*}(\boldsymbol{x})$ with 
\begin{equation}
    \text{width } 32d(N+1)\log_2(8N)(2\log_2(NL)+1)^d
\end{equation}
and 
\begin{equation}
    \text{depth } (33L+2)\log_2(4L)
\end{equation}
such that 
\begin{equation}
    \left\|\tilde{k}_{\boldsymbol{m}_*}(\boldsymbol{x})-f_1^{(n)}(\boldsymbol{x})\right\|_{H^1(\Omega_{\boldsymbol{m}_*})}\le (52+8C_{\alpha,\boldsymbol{l}_*})N^{-4}L^{-4}.
\end{equation}

Combine with the Lemma \ref{err-first}, we have \begin{align}
    \left\|\tilde{k}_{\boldsymbol{m}_*}(\boldsymbol{x})-f(\boldsymbol{x})\right\|_{H^1(\Omega_{\boldsymbol{m}_*})}\le &(52+8C_{\alpha,\boldsymbol{l}_*})N^{-4}L^{-4}+CN^{-2}L^{-2}\notag\\\le & CN^{-2}L^{-2}.
\end{align}
  \end{proof}

 Now we combine $\{\hat{k}_{\boldsymbol{m}}(\boldsymbol{x})\}_{\boldsymbol{m}\in\{1,2\}^d}$ and $\{\phi_{\boldsymbol{m}}(\boldsymbol{x})\}_{\boldsymbol{m}\in\{1,2\}^d}$ in Proposition \ref{peri} to extend the approximation into the whole domain $\Omega$. Before doing so, we require the following lemma. This lemma demonstrates that $\phi{\boldsymbol{m}}(\boldsymbol{x})$ in Proposition \ref{peri} attains $0$ to $0$ behavior in the Sobolev norms:
 \begin{lemma}\label{reduce}
			For any $\chi(\boldsymbol{x})\in H^1([0,1]^d)$, we have \begin{align}
				\|\phi_{\boldsymbol{m}}(\boldsymbol{x})\cdot\chi(\boldsymbol{x})\|_{H^1([0,1]^d)}=&	\|\phi_{\boldsymbol{m}}(\boldsymbol{x})\cdot\chi(\boldsymbol{x})\|_{H^1([0,1]^d)}\notag\\\|\phi_{\boldsymbol{m}}(\boldsymbol{x})\cdot\chi(\boldsymbol{x})-\phi(\phi_{\boldsymbol{m}}(\boldsymbol{x}),\chi(\boldsymbol{x}))\|_{H^1([0,1]^d)}=&	\|\phi_{\boldsymbol{m}}(\boldsymbol{x})\cdot\chi(\boldsymbol{x})-\phi(\phi_{\boldsymbol{m}}(\boldsymbol{x}),\chi(\boldsymbol{x}))\|_{H^1([0,1]^d)}
			\end{align} for any $\boldsymbol{m}\in\{1,2\}^d$, where $\phi_{\boldsymbol{m}}(\boldsymbol{x})$ and $\Omega_{\boldsymbol{m}}$ is defined in Proposition \ref{peri} and Definition.~\ref{omega}, and $\phi$ is from Proposition \ref{2prop} (choosing $a=1$ in the proposition).
		\end{lemma}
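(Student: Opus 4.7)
The plan is to show that both norms are in fact integrals over $\Omega_{\boldsymbol{m}}$ because the integrand and its weak gradient vanish pointwise on the complement $[0,1]^d\setminus\Omega_{\boldsymbol{m}}$. (I read the stated display as equating the $H^1$-norm on $[0,1]^d$ with that on $\Omega_{\boldsymbol{m}}$, since as typed both sides are identical.) The argument rests on two pointwise ``zero-to-zero'' properties: one for the approximate partition-of-unity factor $\phi_{\boldsymbol{m}}$, and one for the approximate-multiplication gadget $\phi$ from Proposition \ref{2prop}.

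First I would establish that $\phi_{\boldsymbol{m}}(\boldsymbol{x})=0$ and $\nabla\phi_{\boldsymbol{m}}(\boldsymbol{x})=0$ for every $\boldsymbol{x}\in[0,1]^d\setminus\Omega_{\boldsymbol{m}}$. Recall from the construction in Proposition \ref{peri} that $\phi_{\boldsymbol{m}}(\boldsymbol{x})=\phi_{\text{prod}}(g_{m_1}(x_1),\ldots,g_{m_d}(x_d))$. By Lemma \ref{prog}(iii), for any such $\boldsymbol{x}$ there exists an index $j$ with $g_{m_j}(x_j)=0$ and $g_{m_j}'(x_j)=0$. Then by Eq.~(\ref{equal0}) of Proposition \ref{prop1}, $\phi_{\text{prod}}(\ldots,0,\ldots)=0$ and $\partial_{y_i}\phi_{\text{prod}}=0$ for all $i\neq j$. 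The chain rule gives
\[
\partial_{x_k}\phi_{\boldsymbol{m}}(\boldsymbol{x})=\partial_{y_k}\phi_{\text{prod}}(g_{m_1},\ldots,g_{m_d})\cdot g_{m_k}'(x_k),
\]
which vanishes for every $k$: when $k=j$, the factor $g_{m_j}'(x_j)=0$; when $k\neq j$, the factor $\partial_{y_k}\phi_{\text{prod}}=0$.

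Given this, the first identity follows immediately: $(\phi_{\boldsymbol{m}}\cdot\chi)(\boldsymbol{x})=0$ outside $\Omega_{\boldsymbol{m}}$, and its weak gradient $\chi\,\nabla\phi_{\boldsymbol{m}}+\phi_{\boldsymbol{m}}\,\nabla\chi$ is zero a.e.\ outside $\Omega_{\boldsymbol{m}}$, so the $L^2$-integrals defining the $H^1$-norm split to $\Omega_{\boldsymbol{m}}$. For the second identity, I would invoke Proposition \ref{2prop} with $a=1$, which guarantees $\phi(0,y)=0$ and $\partial_y\phi(0,y)=0$ for all $y$. Hence at any $\boldsymbol{x}\notin\Omega_{\boldsymbol{m}}$, since $\phi_{\boldsymbol{m}}(\boldsymbol{x})=0$ we obtain $\phi(\phi_{\boldsymbol{m}}(\boldsymbol{x}),\chi(\boldsymbol{x}))=\phi(0,\chi(\boldsymbol{x}))=0$, and by the chain rule
\[
\nabla\bigl[\phi(\phi_{\boldsymbol{m}},\chi)\bigr]=\partial_1\phi\cdot\nabla\phi_{\boldsymbol{m}}+\partial_2\phi\cdot\nabla\chi,
\]
where the first term is zero because $\nabla\phi_{\boldsymbol{m}}=0$ from step one, and the second is zero because $\partial_2\phi(0,\chi)=0$ by Proposition \ref{2prop}. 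Subtracting, both $\phi_{\boldsymbol{m}}\chi-\phi(\phi_{\boldsymbol{m}},\chi)$ and its weak gradient vanish outside $\Omega_{\boldsymbol{m}}$, yielding the second identity.

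The only nontrivial step is the chain-rule computation for $\nabla\phi_{\boldsymbol{m}}$: a naive argument that uses only $\phi_{\boldsymbol{m}}=0$ outside $\Omega_{\boldsymbol{m}}$ is insufficient, because in principle the zero set could contain a kink and the weak gradient need not vanish there. It is crucial to use the \emph{strong} zero-derivative conclusion of Eq.~(\ref{equal0}), not merely vanishing of the function value. Once that is recorded, the remainder is just splitting integrals at the boundary of $\Omega_{\boldsymbol{m}}$.
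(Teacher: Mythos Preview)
Your proposal is correct and follows essentially the same approach as the paper's proof: both arguments show that $\phi_{\boldsymbol{m}}$ and $\nabla\phi_{\boldsymbol{m}}$ vanish on $[0,1]^d\setminus\Omega_{\boldsymbol{m}}$ by combining Lemma~\ref{prog}(iii) with Eq.~(\ref{equal0}) of Proposition~\ref{prop1} via the chain rule, and then invoke $\phi(0,y)=\partial_y\phi(0,y)=0$ from Proposition~\ref{2prop} for the second identity. Your interpretation of the typo (the right-hand sides should read $\Omega_{\boldsymbol{m}}$) is also correct, and your closing remark about why mere vanishing of $\phi_{\boldsymbol{m}}$ is insufficient is a useful clarification not made explicit in the paper.
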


  \begin{proof}
			For the first equality, we only need to show that\begin{equation}
				\|\phi_{\boldsymbol{m}}(\boldsymbol{x})\cdot\chi(\boldsymbol{x})\|_{H^1\left([0,1]^d\backslash \Omega_{\boldsymbol{m}}\right)}=0.
			\end{equation}
			
			According to the Proposition \ref{peri}, we have $\phi_{\boldsymbol{m}}(\boldsymbol{x})=\phi_\text{prod}(g_{m_1},g_{m_2},\ldots,g_{m_d})$, and for any $\boldsymbol{x}=(x_1,x_2,\ldots,x_d)\in[0,1]^d\backslash \Omega_{\boldsymbol{m}}$, there is $m_j$ such as $g_{m_j}(x_j)=0$ and $\frac{d g_{m_j}(x_j)}{d x_j}=0$ due to Lemma \ref{prog}. Based on Eq.~(\ref{equal0}) in Proposition \ref{prop1}, we have \[\phi_{\boldsymbol{m}}(\boldsymbol{x})=\frac{\partial \phi_{\boldsymbol{m}}(\boldsymbol{x})}{\partial x_s}=0,~x\in [0,1]^d\backslash \Omega_{\boldsymbol{m}}, s\not=j.\]
			
			Furthermore, \begin{equation}
				\frac{\partial \phi_{\boldsymbol{m}}(\boldsymbol{x})}{\partial x_j}=\frac{\partial \phi_\text{prod}(g_{m_1},g_{m_2},\ldots,g_{m_d})}{\partial g_{m_j}}\frac{\mathrm{d}  g_{m_j}(x_j)}{\mathrm{d}  x_j}=0.
			\end{equation}
			Hence we have \begin{equation}
				|\phi_{\boldsymbol{m}}(\boldsymbol{x})\cdot\chi(\boldsymbol{x})|+\sum_{q=1}^d\left|\frac{\partial \left[\phi_{\boldsymbol{m}}(\boldsymbol{x})\cdot\chi(\boldsymbol{x})\right]}{\partial x_q}\right|=0
			\end{equation} for all $\boldsymbol{x}\in [0,1]^d\backslash \Omega_{\boldsymbol{m}}$.
			
			Similarly, for the second equality in this lemma, we have \begin{align}
				&|\phi(\phi_{\boldsymbol{m}}(\boldsymbol{x}),\chi(\boldsymbol{x}))|+\sum_{q=1}^d\left|\frac{\partial \left[\phi(\phi_{\boldsymbol{m}}(\boldsymbol{x}),\chi(\boldsymbol{x}))\right]}{\partial x_q}\right|\notag\\=&|\phi(0,\chi(\boldsymbol{x}))|+\sum_{q=1}^d\left[\left|\frac{\partial \left[\phi(0,\chi(\boldsymbol{x}))\right]}{\partial \chi(\boldsymbol{x})}\cdot\frac{\partial \chi(\boldsymbol{x})}{\partial x_q}\right|+\left|\frac{\partial \left[\phi(\phi_{\boldsymbol{m}}(\boldsymbol{x}),\chi(\boldsymbol{x}))\right]}{\partial \phi_{\boldsymbol{m}}(\boldsymbol{x})}\cdot \frac{\partial \phi_{\boldsymbol{m}}(\boldsymbol{x})}{\partial x_q}\right|\right]\notag\\=&0,
			\end{align} for all $\boldsymbol{x}\in [0,1]^d\backslash \Omega_{\boldsymbol{m}}$ based on \[\phi(0,y)=\frac{\partial \phi(0,y)}{\partial y}=0,~y\in(-M,M),\] and $\frac{\partial \phi_{\boldsymbol{m}}(\boldsymbol{x})}{\partial x_q}=0$. Hence we finish our proof.
		\end{proof}

  \begin{proof}[Proof of Theorem \ref{H1 K}]
				Based on Propositions \ref{main1} and \ref{main2}, there is a sequence of the neural network $\{\tilde{k}_{\boldsymbol{m}}(\boldsymbol{x})\}_{\boldsymbol{m}\in\{1,2\}^d}$ such that \begin{align}
        \left\|\tilde{k}_{\boldsymbol{m}}(\boldsymbol{x})-f(\boldsymbol{x})\right\|_{H^1(\Omega_{\boldsymbol{m}})}&\le  CN^{-2}L^{-2},\notag\\ \left\|\tilde{k}_{\boldsymbol{m}}(\boldsymbol{x})-f(\boldsymbol{x})\right\|_{L_2(\Omega)}&\le  CN^{-4}L^{-4}(\log_2N)^{d-1}(\log_2L)^{d-1},
   \end{align} where $C$ is independent with $N$ and $L$, and each $\tilde{k}_{\boldsymbol{m}}(\boldsymbol{x})$ for any $\boldsymbol{m}\in\{1,2\}^d$ is a $\sigma$-NN with $64d(N+2)(\log_2(8N))^{d+1}$  width and $(33L+2)(\log_2(4L))^{d+1}$ depth. According to Proposition \ref{peri}, there is a sequence of the neural network $\{\phi_{\boldsymbol{m}}(\boldsymbol{x})\}_{\boldsymbol{m}\in\{1,2\}^d}$ such that \[\|\phi_{\boldsymbol{m}}(\boldsymbol{x})-g_{\boldsymbol{m}}(\boldsymbol{x})\|_{W^{1,\infty}([0,1]^d)}\le50 d^{\frac{5}{2}}(N+1)^{-4dL},\]where $\{g_{\boldsymbol{m}}\}_{\boldsymbol{m}\in\{1,2\}^d}$ is defined in Definition \ref{gm} with $\sum_{\boldsymbol{m}\in\{1,2\}^d}g_{\boldsymbol{m}}(\boldsymbol{x})=1$ and ${\rm supp}~ g_{\boldsymbol{m}}\cap[0,1]^d=\Omega_{\boldsymbol{m}}$. For each $\phi_{\boldsymbol{m}}$, it is a neural network with the width smaller than $(9+d)(N+1)+d-1$ and depth smaller than $15 d(d-1) L$.
				
				Due to Proposition \ref{2prop}, there is a neural network $\widetilde{\Phi}$ with the width $15(N+1)$ and depth $14L$ such that $\|\phi\|_{W^{1,\infty}[-1,1]^2}\le 12$ and \begin{equation}
					\left\|\widetilde{\Phi}(x,y)-xy\right\|_{W^{1,\infty}[-1,1]^2}\le 6(N+1)^{-7(L+1)}.
				\end{equation}
				
				Now we define \begin{equation}
					k(\boldsymbol{x})=\sum_{\boldsymbol{m}\in\{1,2\}^d}\phi(\phi_{\boldsymbol{m}}(\boldsymbol{x}),\tilde{k}_{\boldsymbol{m}}(\boldsymbol{x})).
				\end{equation}
				
				Note that \begin{align}
					\fR:=&\|f(\boldsymbol{x})-\phi(\boldsymbol{x})\|_{H^1([0,1]^d)}=\left\|\sum_{\boldsymbol{m}\in\{1,2\}^d} g_{\boldsymbol{m}}\cdot f(\boldsymbol{x})-\phi(\boldsymbol{x})\right\|_{H^1([0,1]^d)}\notag\\\le &\left\|\sum_{\boldsymbol{m}\in\{1,2\}^d} \left[g_{\boldsymbol{m}}\cdot f(\boldsymbol{x})-\phi_{\boldsymbol{m}}(\boldsymbol{x})\cdot\psi_{\boldsymbol{m}}(\boldsymbol{x})\right]\right\|_{H^1([0,1]^d)}\notag\\&+\left\|\sum_{\boldsymbol{m}\in\{1,2\}^d} \left[\phi_{\boldsymbol{m}}(\boldsymbol{x})\cdot\psi_{\boldsymbol{m}}(\boldsymbol{x})-\widetilde{\Phi}(\phi_{\boldsymbol{m}}(\boldsymbol{x}),\psi_{\boldsymbol{m}}(\boldsymbol{x}))\right]\right\|_{H^1([0,1]^d)}.
				\end{align}
				
				As for the first part, \begin{align}
					&\left\|\sum_{\boldsymbol{m}\in\{1,2\}^d} \left[g_{\boldsymbol{m}}\cdot f(\boldsymbol{x})-\phi_{\boldsymbol{m}}(\boldsymbol{x})\cdot\psi_{\boldsymbol{m}}(\boldsymbol{x})\right]\right\|_{H^1([0,1]^d)}\notag\\\le& \sum_{\boldsymbol{m}\in\{1,2\}^d}\left\| g_{\boldsymbol{m}}\cdot f(\boldsymbol{x})-\phi_{\boldsymbol{m}}(\boldsymbol{x})\cdot\psi_{\boldsymbol{m}}(\boldsymbol{x})\right\|_{H^1([0,1]^d)}\notag\\\le &\sum_{\boldsymbol{m}\in\{1,2\}^d}\left[\left\| (g_{\boldsymbol{m}}-\phi_{\boldsymbol{m}}(\boldsymbol{x}))\cdot f(\boldsymbol{x})\right\|_{H^1([0,1]^d)}+\left\| (f_{\boldsymbol{m}}-\psi_{\boldsymbol{m}}(\boldsymbol{x}))\cdot \phi_{\boldsymbol{m}}(\boldsymbol{x})\right\|_{H^1([0,1]^d)}\right]\notag\\=&\sum_{\boldsymbol{m}\in\{1,2\}^d}\left[\left\| (g_{\boldsymbol{m}}-\phi_{\boldsymbol{m}}(\boldsymbol{x}))\cdot f(\boldsymbol{x})\right\|_{H^1([0,1]^d)}+\left\| (f_{\boldsymbol{m}}-\psi_{\boldsymbol{m}}(\boldsymbol{x}))\cdot \phi_{\boldsymbol{m}}(\boldsymbol{x})\right\|_{H^1([0,1]^d)}\right],
				\end{align}where the last equality is due to Lemma \ref{reduce}. Based on $\|f\|_{W^{1,\infty}([0,1]^d)}\le 1$, we have \begin{align}
					\left\| (g_{\boldsymbol{m}}-\phi_{\boldsymbol{m}}(\boldsymbol{x}))\cdot f(\boldsymbol{x})\right\|_{H^1([0,1]^d)}\le \left\| (g_{\boldsymbol{m}}-\phi_{\boldsymbol{m}}(\boldsymbol{x}))\right\|_{H^1([0,1]^d)}\le 50 d^{\frac{5}{2}}(N+1)^{-4dnL}.
				\end{align} And \begin{align}
					&\left\| (f_{\boldsymbol{m}}-\psi_{\boldsymbol{m}}(\boldsymbol{x}))\cdot \phi_{\boldsymbol{m}}(\boldsymbol{x})\right\|_{H^1([0,1]^d)}\notag\\\le& \left\| (f_{\boldsymbol{m}}-\psi_{\boldsymbol{m}}(\boldsymbol{x}))\right\|_{H^1([0,1]^d)}\cdot \|\phi_{\boldsymbol{m}}\|_{L_\infty(\Omega_{\boldsymbol{m}})}+\left\| (f_{\boldsymbol{m}}-\psi_{\boldsymbol{m}}(\boldsymbol{x}))\right\|_{L^{2}(\Omega_{\boldsymbol{m}})}\cdot \|\phi_{\boldsymbol{m}}\|_{W^{1,\infty}(\Omega_{\boldsymbol{m}})}\notag\\\le &CN^{-2}L^{-2}\cdot\left( 1+50d^{\frac{5}{2}}\right)+CN^{-4}L^{-4}\cdot54d^{\frac{5}{2}}{ N^2 L^2}(\log_2N)^{d-1}(\log_2L)^{d-1}\notag\\\le& C{ N^{-2} L^{-2}}(\log_2N)^{d-1}(\log_2L)^{d-1},
				\end{align}
				where the second inequality is due to \begin{align}
					&\|\phi_{\boldsymbol{m}}\|_{L_\infty(\Omega_{\boldsymbol{m}})}\le \|\phi_{\boldsymbol{m}}\|_{L_\infty([0,1]^d)}\le \|g_{\boldsymbol{m}}\|_{L_\infty([0,1]^d)}+\|\phi_{\boldsymbol{m}}-g_{\boldsymbol{m}}\|_{L_\infty([0,1]^d)}\le 1+50d^{\frac{5}{2}}\notag\\
					&\|\phi_{\boldsymbol{m}}\|_{W^{1,\infty}(\Omega_{\boldsymbol{m}})}\le \|\phi_{\boldsymbol{m}}\|_{W^{1,\infty}([0,1]^d)}\le \|g_{\boldsymbol{m}}\|_{W^{1,\infty}([0,1]^d)}+\|\phi_{\boldsymbol{m}}-g_{\boldsymbol{m}}\|_{W^{1,\infty}([0,1]^d)}\notag\\&\le 4N^2L^2+50d^{\frac{5}{2}}.
				\end{align}
				
				Therefore\begin{align}
					\left\|\sum_{\boldsymbol{m}\in\{1,2\}^d} \left[g_{\boldsymbol{m}}\cdot f(\boldsymbol{x})-\phi_{\boldsymbol{m}}(\boldsymbol{x})\cdot\psi_{\boldsymbol{m}}(\boldsymbol{x})\right]\right\|_{W^{1,\infty}([0,1]^d)}\le C{ N^{-2} L^{-2}}(\log_2N)^{d-1}(\log_2L)^{d-1},\label{r1}
				\end{align} due to $(N+1)^{-4dnL}\le N^{-2n}L^{-2n}$.
				
				For the second part, due to Lemma \ref{reduce}, we have \begin{align}&\left\|\sum_{\boldsymbol{m}\in\{1,2\}^d} \left[\phi_{\boldsymbol{m}}(\boldsymbol{x})\cdot\psi_{\boldsymbol{m}}(\boldsymbol{x})-\widetilde{\Phi}(\phi_{\boldsymbol{m}}(\boldsymbol{x}),\psi_{\boldsymbol{m}}(\boldsymbol{x}))\right]\right\|_{H^1([0,1]^d)}\notag\\\le&\sum_{\boldsymbol{m}\in\{1,2\}^d}\left\| \phi_{\boldsymbol{m}}(\boldsymbol{x})\cdot\psi_{\boldsymbol{m}}(\boldsymbol{x})-\widetilde{\Phi}(\phi_{\boldsymbol{m}}(\boldsymbol{x}),\psi_{\boldsymbol{m}}(\boldsymbol{x}))\right\|_{H^1([0,1]^d)}\notag\\=&\sum_{\boldsymbol{m}\in\{1,2\}^d}\left\| \phi_{\boldsymbol{m}}(\boldsymbol{x})\cdot\psi_{\boldsymbol{m}}(\boldsymbol{x})-\widetilde{\Phi}(\phi_{\boldsymbol{m}}(\boldsymbol{x}),\psi_{\boldsymbol{m}}(\boldsymbol{x}))\right\|_{H^1(\Omega_{\boldsymbol{m}})}.
				\end{align}
				
			Due to Lemma \ref{composition}, we have that \begin{align}
					&\left\| \phi_{\boldsymbol{m}}(\boldsymbol{x})\cdot\psi_{\boldsymbol{m}}(\boldsymbol{x})-\widetilde{\Phi}(\phi_{\boldsymbol{m}}(\boldsymbol{x}),\psi_{\boldsymbol{m}}(\boldsymbol{x}))\right\|_{H^1(\Omega_{\boldsymbol{m}})}\le C{ N^{-2} L^{-2}}(\log_2N)^{d-1}(\log_2L)^{d-1}.\label{r2}
				\end{align}
				
				Combining (\ref{r1}) and (\ref{r2}), we have that there is a $\sigma$-NN with $(47L+2)(\log_2(4L))^{d+1}$ depth and $2^{d+6}d(N+2)(\log_2(8N))^{d+1}$  width such that\[\|f(\boldsymbol{x})-k(\boldsymbol{x})\|_{H^{1}([0,1]^d)}\le C{ N^{-2} L^{-2}}(\log_2N)^{d-1}(\log_2L)^{d-1},\]where $C$ is the constant independent with $N,L$.
			\end{proof}

   The approximation rate for Korobov spaces provided in Theorem \ref{H1 K} falls short of achieving the nearly optimal approximation rate observed in function spaces $W^{2d,p}$, as measured by the norm containing the first derivative \cite{yang2023nearly}. In the latter case, the optimal rate is $\fO((NL)^{-\frac{4d-2}{d}})$. The limitation in achieving this optimal rate for Korobov spaces is rooted in the sensitivity of these functions to derivatives. For instance, consider a finite expansion of $f$ in Korobov spaces denoted as $f_n^{(1)}(\boldsymbol{x})=\sum_{|\boldsymbol{l}|_1 \leq n+d-1} \sum_{\boldsymbol{i} \in \boldsymbol{i}_{\boldsymbol{l}}} v_{\boldsymbol{l}, \boldsymbol{i}} \phi_{\boldsymbol{l}, \boldsymbol{i}}(\boldsymbol{x})$. In this expansion, there exists a spline function $\phi_{\boldsymbol{l}, \boldsymbol{i}}(\boldsymbol{x})$ for which $\boldsymbol{l}=(n,1,1,1,\ldots,1)$, and its partial derivative with respect to $x_1$ can be very large, on the order of $2^n$. The way to prove the optimality of the $H^1$ case is similar to Theorem \ref{Optimality} and combined with the following lemma:
   \begin{lemma}[{\cite[Theorem 1]{yang2023nearly}}]\label{vcdim1}
		For any $N,L,d\in\sN_+$, there exists a constant $\bar{C}$ independent with $N,L$ such that	\begin{equation}
		\text{VCdim}(D\Phi)\le \bar{C} N^2L^2\log_2 L\log_2 N,\label{bound}\end{equation}for \begin{align}
			D\Phi:=\left\{\psi=D_i\phi:\phi\in\Phi,~i=1,2,\ldots,d\right\},\end{align}where $ \Phi:=\left\{\phi:\phi\text{ is a $\sigma$-NN in $\sR^d$ with width$\le N$ and depth$\le L$}\right\}$, and $D_i$ is the weak derivative in the $i$-th variable.
	\end{lemma}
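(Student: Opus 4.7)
My plan is to mirror the Bartlett--Harvey--Liaw--Mehrabian machinery behind Lemma~\ref{vcdim}, adapting it from $\phi$ to its weak derivatives. The central observation is that on each \emph{activation cell} of a ReLU network --- each region of joint parameter--input space on which every pre-activation has a fixed sign --- the weak derivative $D_i\phi$ is itself a polynomial in the parameters of the same form and degree as $\phi$. Therefore the sign-pattern counting that yields the bound on $\mathrm{VCdim}(\Phi)$ applies essentially verbatim to $\{D_i\phi\}$, with only the constants changing.

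First, I would write down the chain-rule expression
\[
D_i\phi(x)=\boldsymbol W_{L+1}\,\mathrm{diag}(\mathbf 1[\boldsymbol h_L(x)>0])\,\boldsymbol W_L\,\mathrm{diag}(\mathbf 1[\boldsymbol h_{L-1}(x)>0])\cdots \boldsymbol W_1\,\boldsymbol e_i,
\]
which is valid at any $x$ for which no pre-activation coordinate $\boldsymbol h_k(x)$ vanishes. When the signs of all pre-activations at all chosen input points are fixed, each diagonal factor becomes a constant $0/1$ matrix, so $D_i\phi(x_j)$ is a polynomial of degree at most $L+1$ in the $W=\mathcal{O}(N^2L)$ network parameters, i.e.\ of exactly the same degree as $\phi(x_j)$.

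Next, I would run the standard Warren--Milnor--Thom count. For any $m$ input points $x_1,\dots,x_m$, the joint activation pattern at these points is determined by the signs of at most $mNL$ polynomials in the parameters, each of degree at most $L$. These partition parameter space into at most $(CmNL^2/W)^W$ cells. In each cell, the sign vector $(\mathrm{sgn}(D_i\phi(x_j)))_{j=1}^m$ is the sign vector of $m$ polynomials of degree at most $L+1$ in the $W$ parameters, contributing at most $(CmL/W)^W$ further patterns. For shattering to occur the product must majorize $2^m$, and the usual implicit-solution argument then forces $m=\mathcal O(WL\log W)=\mathcal O(N^2L^2\log_2 N\log_2 L)$. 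Finally, taking the union over $i=1,\dots,d$ multiplies the count by $d$, absorbed into $\bar C$.

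The main obstacle is the rigorous treatment of the weak-derivative formulation, since $\sigma'$ is only defined almost everywhere and the chain-rule identity above only holds off a null set. I would handle this by observing that for any fixed finite point set $\{x_j\}_{j=1}^m$, the parameters for which some $\boldsymbol h_k(x_j)$ vanishes form a finite union of codimension-one algebraic varieties in parameter space; outside this exceptional set the displayed formula is exact. Because shattering is a combinatorial property realized by finitely many parameter choices, we may always perturb off the exceptional set without affecting realizable sign vectors of $D_i\phi$ at the $x_j$. After this measure-theoretic bookkeeping the remainder is purely combinatorial and reduces to the Bartlett et al.\ estimate already invoked in Lemma~\ref{vcdim}.
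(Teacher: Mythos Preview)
The paper does not supply its own proof of this lemma: it is quoted verbatim as \cite[Theorem~1]{yang2023nearly} and used as a black box in the proof of Theorem~\ref{OptimalityH1}. So there is no ``paper's proof'' to compare against here.

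That said, your sketch is essentially the correct argument and is exactly how the cited result is obtained. The crucial structural point --- that on each activation cell the chain-rule formula makes $D_i\phi(x_j)$ a polynomial in the network parameters of degree at most $L+1$, i.e.\ of the same order as $\phi(x_j)$ itself --- is what drives the whole thing, and you have it right. The partition-and-count via Warren's bound then runs identically to Lemma~\ref{vcdim}, and the union over $i=1,\dots,d$ contributes only a factor $d$ to the growth function, which is absorbed into the $d$-dependent constant $\bar C$.

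One small remark on your handling of the weak-derivative issue: rather than arguing by perturbation off the exceptional algebraic set (which requires checking that no sign vector is lost in the limit), it is cleaner to simply fix a convention, say $\sigma'(0)=0$, and work with the resulting everywhere-defined representative of $D_i\phi$. With that convention the displayed chain-rule identity holds at \emph{every} $x$ and every parameter, the indicator matrices $\mathrm{diag}(\mathbf 1[\boldsymbol h_k>0])$ are still determined by the same polynomial sign conditions, and the Warren count applies directly to the resulting piecewise-polynomial map without any measure-theoretic detour. This avoids the delicate step of arguing that perturbation preserves realizable sign vectors.
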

 \begin{theorem}\label{OptimalityH1}
			Given any $\rho, C_{1}, C_{2}, C_{3}, J_0>0$ and $n,d\in\sN^+$, there exist $N,L\in\sN$ with $NL\ge J_0$ and $f\in X^{2,\infty}$ with $|f|_{2,\infty}\le 1$, such that\begin{equation}
				\inf_{\phi\in\fK}\|\phi-f\|_{H^1(\Omega)}> C_{3}L^{-2-\rho}N^{-2-\rho},
			\end{equation} where \[\fK:=\{\text{$\sigma$-NNs in $\sR^d$ with the width $C_{1} N(\log_2 N)^{d+1}$ and depth $C_{2} L(\log_2 L)^{d+1}$}\}.\]
		\end{theorem}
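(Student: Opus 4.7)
The plan is to mirror the proof of Theorem~\ref{Optimality}, but to trade one order of smoothness for one weak derivative: instead of invoking Lemma~\ref{popt} at Sobolev exponent $s=2$ against the VC-dimension of $\tilde{\mathcal{K}}$ itself, I will apply Lemma~\ref{popt} at $s=1$ against the VC-dimension of the \emph{derivative} class $D\tilde{\mathcal{K}}$, which is controlled by Lemma~\ref{vcdim1}. The $H^1$ error is lower-bounded by the $L_2$ error on one partial derivative, so this is exactly the right pairing to obtain the $(NL)^{-2}$ scaling (up to polylog factors that are absorbed into the slack $\rho$).

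Concretely, let $\tilde{\mathcal{K}}$ denote the class of one-dimensional $\sigma$-NNs with width $C_{1}N(\log_2 N)^{d+1}$ and depth $C_{2}L(\log_2 L)^{d+1}$, and set $D\tilde{\mathcal{K}}:=\{D_1\phi:\phi\in\tilde{\mathcal{K}}\}$. Lemma~\ref{vcdim1} gives
\[
\text{VCdim}(D\tilde{\mathcal{K}})\;\le\;\bar{C}\,N^{2}L^{2}(\log_2 N)^{2d+3}(\log_2 L)^{2d+3}\;=:\;n_{N,L}.
\]
Since translating the input of a $\sigma$-NN only modifies the biases of the first affine layer, $\tilde{\mathcal{K}}$ is translation invariant, and hence so is $D\tilde{\mathcal{K}}$. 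Lemma~\ref{popt} (applied on $[0,1]$ with $d=1$, $s=1$, $p=2$) then furnishes some $h\in W^{1,\infty}([0,1])$ with $\|h\|_{W^{1,\infty}}\le 1$ and
\[
\inf_{g\in D\tilde{\mathcal{K}}}\|h-g\|_{L_2([0,1])}\;\ge\;C(d,2)\,n_{N,L}^{-1}.
\]

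Define $\tilde{f}(t):=\int_0^t h(s)\,ds$, so $D_1\tilde{f}=h$ and $\tilde{f}\in W^{2,\infty}([0,1])$, and set $f(\boldsymbol{x}):=\tilde{f}(x_1)$. Then every mixed derivative $D^{\boldsymbol{k}}f$ with some $k_j\ne 0$ for $j\ge 2$ vanishes identically, so $|f|_{2,\infty}=0\le 1$ and $f\in X^{2,\infty}(\Omega)$ after the same cosmetic boundary adjustment used in Theorem~\ref{Optimality}. For any $\phi\in\fK$ and any fixed $(x_2,\ldots,x_d)\in[0,1]^{d-1}$, the slice $x_1\mapsto\phi(x_1,x_2,\ldots,x_d)$ is itself a $\sigma$-NN of the same architecture, hence an element of $\tilde{\mathcal{K}}$, so $D_1\phi(\cdot,x_2,\ldots,x_d)\in D\tilde{\mathcal{K}}$. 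Fubini then yields
\[
\|f-\phi\|_{H^1(\Omega)}^2\;\ge\;\|D_1f-D_1\phi\|_{L_2(\Omega)}^2\;=\;\int_{[0,1]^{d-1}}\bigl\|h-D_1\phi(\cdot,x_2,\ldots,x_d)\bigr\|_{L_2([0,1])}^2\,dx_2\cdots dx_d\;\ge\;C(d,2)^2\,n_{N,L}^{-2}.
\]
Since $n_{N,L}^{-1}$ is of order $(NL)^{-2}(\log_2 N)^{-(2d+3)}(\log_2 L)^{-(2d+3)}$, the polylog factors may be absorbed into $N^{-\rho}L^{-\rho}$ once $NL\ge J_0$ is large enough, giving $\|\phi-f\|_{H^1(\Omega)}>C_3 L^{-2-\rho}N^{-2-\rho}$.

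The only nontrivial bookkeeping is verifying the two hypotheses needed to hand off to Lemma~\ref{popt} with the derivative class: translation invariance of $D\tilde{\mathcal{K}}$ (immediate from translation invariance of NN architectures under input shifts, together with the chain rule), and a VC-dimension bound for $D\tilde{\mathcal{K}}$ that still scales like $N^2L^2$ times polylog factors after the architecture has been inflated by $(\log_2 N)^{d+1}$ and $(\log_2 L)^{d+1}$; this is exactly what Lemma~\ref{vcdim1} provides. Everything else is a routine transcription of the argument in Theorem~\ref{Optimality} from the $L_\infty$ norm to the $L_2$ norm of one partial derivative.
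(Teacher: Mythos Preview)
Your proposal is correct and is precisely the argument the paper is gesturing at with its one-line proof ``similar to Theorem~\ref{Optimality} combined with Lemma~\ref{vcdim1}'': bound $\text{VCdim}(D\tilde{\mathcal{K}})$ via Lemma~\ref{vcdim1}, apply Lemma~\ref{popt} at $s=1$ to the translation-invariant derivative class, and pass to the $H^1$ norm by antidifferentiating and using $\|f-\phi\|_{H^1}\ge\|D_1 f-D_1\phi\|_{L_2}$ together with the slice-and-Fubini reduction from $\Omega$ to $[0,1]$. The antiderivative step $\tilde f(t)=\int_0^t h$ and the Fubini integration over the remaining coordinates are the only details not already present verbatim in the proof of Theorem~\ref{Optimality}, and you have supplied them correctly.
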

  \begin{proof}
     The proof is similar to Theorem \ref{Optimality} and combined with Lemma \ref{vcdim1}.
  \end{proof}

Comparing the results in Theorem \ref{H1} and Corollary \ref{L2} with Theorems \ref{L2 K} and \ref{H1 K}, we observe that the results in Theorems \ref{L2 K} and \ref{H1 K} are significantly better than those in Theorem \ref{H1} and Corollary \ref{L2}. The constants in Theorems \ref{H1} and \ref{L2} are superior to those in Theorems \ref{L2 K} and \ref{H1 K}, which exponentially depend on the dimension $d$. This leaves an open question for future research to explore alternative approaches for addressing the challenge of incorporating the dependence on $d$ in the lower bounds while maintaining a \textit{super-convergence} rate.
   \section{Conclusion}
This paper establishes the approximation of deep neural networks (DNNs) for Korobov spaces, not only in $L_p$ norms for $2 \le p \le \infty$ but also in $H^1$ norms, effectively avoiding the curse of dimensionality. For both types of errors, we establish a \textit{super-convergence} rate and prove the optimality of each approximation. %Additionally, we derive the generalization error for Korobov spaces using $H^1$ loss functions.

In our exploration of deep neural networks for approximating Korobov spaces, we note that prior work, such as \cite{blanchard2021shallow}, has focused on two-hidden layer neural networks for shallow approximations. The establishment of the potential of one-hidden layer neural networks for approximating functions in Korobov spaces is considered as future work. Moreover, in this paper, we delve into proving the optimality of our results. The proof strategy relies on the fact that the approximation rate in $X^{2,\infty}([0,1]^d)$ achieves a nearly optimal approximation rate for $W^{2,\infty}([0,1])$. However, when combining our work with the estimates provided in \cite{mao2022approximation}, it becomes evident that the \textit{super-convergence} rate for $X^{2,p}$ can only achieve $\mathcal{O}\left(N^{-4+\frac{2}{p}}L^{-4+\frac{2}{p}}\right)$ (up to logarithmic factors). Determining whether this rate is nearly optimal and establishing a proof for it remains an open question.

\section*{Acknowledgment}

YL is supported by the National Science Foundation through the award DMS-2343135.

\bibliographystyle{plain}
\bibliography{references}

\end{document}